\newtheorem{theo}{Theorem}[section]
\newtheorem{defini}[theo]{Definition}
\newtheorem{proposi}[theo]{Proposition}
\newtheorem{lemma}[theo]{Lemma}
\newtheorem{coro}[theo]{Corollary}
\newtheorem{rem}[theo]{Remark}
\newcommand{\Aa}{{\mathcal A}}
\newcommand{\Bb}{{\mathcal B}}
\newcommand{\Dd}{{\mathcal D}}
\newcommand{\Ee}{{\mathcal E}}
\newcommand{\Gg}{{\mathcal G}}
\newcommand{\Hh}{{\mathcal H}}
\newcommand{\Kk}{{\mathcal K}}
\newcommand{\Ll}{{\mathcal L}}
\newcommand{\Rr}{{\mathcal R}}
\newcommand{\Tt}{{\mathcal T}}
\newcommand{\Vv}{{\mathcal V}}
\newcommand{\id}{{\mathbf 1}}
\newcommand{\CM}{{\mathbb C}}
\newcommand{\NM}{{\mathbb N}}
\newcommand{\QM}{{\mathbb Q}}
\newcommand{\RM}{{\mathbb R}}
\newcommand{\ZM}{{\mathbb Z}}
\newcommand{\Z}{\ZM}
\newcommand{\R}{\RM}
\newcommand{\TR}{{\rm Tr\,}}                       
\renewcommand{\phi}{\varphi}                  
\renewcommand{\tilde}{\widetilde}
\newcommand{\freq}{\textrm{freq}}		
\newcommand{\vol}{\textrm{vol}}		
\newcommand{\dec}{\delta}
\newcommand{\op}{\text{\rm op}}
\newcommand{\pf}{\lambda_{\text{\rm \tiny PF}}}	
\newcommand{\dtr}{\delta^{tr}}
\newcommand{\dlg}{\delta^{lg}}
\newcommand{\rtr}{\rho_{tr}}
\newcommand{\rlg}{\rho_{lg}}
\newcommand{\Rob}{\Rr}
\newcommand{\HS}{{\mathfrak H}}
\newcommand{\HE}{\mathcal H}  
\newcommand{\res}{\mbox{\rm Res}}
\newcommand{\lp}{{l^*}}
\newcommand{\asym}{\stackrel{t\to 0}{\sim}}
\newcommand{\Ue}{W}
\newcommand{\f}{\mathfrak p}
\title{Spectral triples from stationary Bratteli diagrams\footnote{Work supported by the ANR grant {\em SubTile} no. NT09 564112.}}
\author{ J. Kellendonk$^1$, J. Savinien$^2$\\
{\small $^1$ ICJ, Universit\'e Lyon I, France.}\\
{\small $^2$ LMAM, Universit\'e de Lorraine - Metz, France.}
}
\date{\today}
\begin{document}

\maketitle

\begin{abstract}
We define spectral triples for stationary Bratteli diagrams and study associated Dirichlet forms.
We describe several examples, and emphasize the case of substitution tiling spaces, which are foliated spaces with self-similar Cantor transversals, and leaves homeomorphic to $\RM^d$.
We derive two types of Dirichlet forms for tilings: one of transversal type, and one of longitudinal type whose infinitesimal generator is similar to a Laplacian in $\RM^d$. 
The spectrum of the forms is the set of continuous dynamical eigenfunctions.
\end{abstract}


\tableofcontents





\section{Introduction}
Even though noncommutative geometry \cite{Co94} was invented to
describe (virtual) noncommutative spaces it turned out also to provide
new perspectives on (classical) commutative spaces. In particular
Connes' idea of spectral triples aiming at a spectral description of
geometry has generated new concepts, or shed new light on existing
ones, for topological spaces: dimension spectrum, Seeley type
coefficients, spectral state, or Dirichlet forms are notions which are
derived from the spectral triple and we will talk about them here. Indeed,
we study in this paper certain spectral triples for commutative
algebras which are associated with stationary Bratteli diagrams, that
is, with the space of infinite paths on a finite oriented graph. Such
Bratteli diagrams occur in systems with self-similarity such as the
tiling systems defined by substitutions. 

Our construction follows from earlier onces for metric spaces which go
under the name "direct sum of point pairs" \cite{Christensen} or
"approximating graph" \cite{KS10}, suitably adapted to incorporate the
self-similar symmetry. The construction is therefore more rigid. The
so-called Dirac operator $D$ of the spectral triple will depend on a
parameter $\rho$ 
which is related to the self-similar scaling. 
We observe a new feature which, we believe, ought to be interpreted as
a sign of self-similarity: The zeta function is periodic with purely
imaginary period $\frac{2\pi i}{\log \rho}$. Correspondingly, what
corresponds to the Seeley coefficients (in the case of manifolds) in
the expansion of the trace of the heat-kernel $e^{-tD^2}$ is here
given by functions of $\log t$ which are   $\frac{2\pi }{\log
  \rho}$-periodic. This has consequences for the usual formulae for
tensor products of spectral tiples. If we take the tensor product of
two such triples and compare the spectral states $\Tt_{1,2}$ for the
individual factors with the spectral state $\Tt$ of the tensor
product, then a formula like $\Tt(A_1\otimes A_2) =
\Tt_1(A_1)\Tt_2(A_2)$ will not always hold due to resonance phenomena
of the involved periodicities.  

The heat kernel we were referring to above is the kernel of the
semigroup generated by $D^2$ and hence does not involve the
algebra. But the spectral triple gives in principle rise to other
semigroups whose generators may be defined by Dirichlet forms of the
type $Q(f,g) = \Tt([D,f]^*[D,g])$. Here $f,g$ are represented elements
of the algebra and $\Tt$ a state. We will take for $\Tt$ the spectral
state, a common choice, but not the only possible
one. Pearson-Bellissard \cite{PB09}, for instance, choose the standard
operator trace. There is however a difficulty, namely it is a priori
not clear what is the right domain of definition of $Q$. As we will
conclude from this work, the choice of domain is crucial and needs
additional ingredients. This is why we can only discuss rigorously
Dirichlet forms and Laplacians (their infinitesimal generators) in the
second part of the paper, when we consider our applications. 

Our main application will be to the tiling space of a substitution
tiling. In this case the finite oriented graph defining the spectral
triple is the substitution graph. 
Moreover, the spectral triple is essentially described by the tensor
product of two spectral triples of the above type, one for the
transversal and one for the longitudinal direction. There will be thus 
two parameters, $\rho_{tr}$ and $\rho_{lg}$. 
The additional ingredients, which will allow us to define a domain for
the Dirichlet form $Q$, are the dynamical eigenfunctions of the   
translation action on the tiling space. We have to suppose that these
span the Hilbert space of $L^2$-functions on the tiling space and we
are thus lead, by Solomyak's theorem \cite{Sol07}, to consider Pisot
substitutions. This means that the Perron-Frobenius eigenvalue of the
substitution matrix is $\theta^d$, where $\theta$ is a Pisot number
and $d$ the dimension of the tiling. 
Our main result about the Dirichlet form can then be qualitatively
explained as follows: In order to have non-trivial Dirichlet forms the
parameters $\rho_{tr}$ and $\rho_{lg}$ have to be fixed such that
$\rho_{lg} = \theta^{-1}$ and $\rho_{tr} = |\theta'|$ where $\theta'$
is an algebraic conjugate to $\theta$, distinct from $\theta$, with
maximal modulus. The modulus $ |\theta'|$ is strictly smaller than $1$
by the Pisot-property  and larger than $\theta^{-1}$ (equality holds 
only for quadratic unimodular Pisot numbers). It then follows that
the Laplacian defined by the Dirichlet form can be interpreted as an
elliptic operator on the maximal equicontinuous factor of the
translation action on the tiling space. 


\paragraph{Summary of results}
After a quick introduction to spectral triples we are first concerned
with the properties of their zeta functions in the case that the
expansion of the trace of the heat kernel $e^{-tD^2}$ is not simply an
expansion into powers of $t$ but of the type 



\begin{equation} 
\label{eq-intro-heat}
\TR(e^{-tD^2}) \asym f(-\log t) \; t^\alpha \,,
\end{equation}
with $\Re(\alpha)<0$, and $f:\RM_+ \rightarrow \RM$  a bounded locally integrable function such that $\lim_{s\rightarrow 0^+} s\Ll[f](s)$ exists and is non zero, where $\Ll$ is the Laplace transform. A non-constant $f$ in that expansion has consequences which we did not expect at first.
We are lead in Section~\ref{ssec-specstate} to study classes of operators on $\Bb(\HS)$ which have a compatible behavior.
An operator $A\in \Bb(\HS)$  is {\it weakly regular} if there exists a bounded locally integrable function $f_A:\RM_+ \rightarrow \RM$, for which $\lim_{s\rightarrow 0^+} s\Ll[f_A](s)$ exists and is non zero, such that
\begin{equation} 
\label{eq-intro-wreg}
\TR(e^{-tD^2}A) \asym f_A(-\log t) \; t^\alpha \,,
\end{equation}
where $\alpha$ is the same as in equation~\eqref{eq-intro-heat}.
For such operators, the spectral state does not depend on a choice of a Dixmier trace and is given by:
\[
 \Tt(A) = \lim_{s\rightarrow 0^+} \frac{\Ll[f_A](s)}{\Ll[f](s)} \,,
\]
where $f$ is the same as in equation~\eqref{eq-intro-heat}, see Lemma~\ref{lem-specstate}.
We also define {\it strongly regular} operators, for which one has in particular $f_A = \Tt(A) f$ in equation~\eqref{eq-intro-wreg} (see Lemma~\ref{cor-streg}).
Regular operators have an interesting behavior under tensor product, which we will use in the applications to tilings.
If the spectral triple is a tensor product: \((\Aa,\HS,D)=(\Aa_1\otimes\Aa_2,\HS_1\otimes\HS_2,D_1\otimes \id + \chi \otimes D_2)\), where $\chi$ is a grading on $(\Aa_1,\HS_1,D_1)$, then one has:
\[
\Tt(A_1\otimes A_2) = \lim_{s\rightarrow 0^+} \frac{ \Ll[f_{A_1}f_{A_2}](s) }{ \Ll[f_1f_2](s) }\,,
\]
where $f_i$ is as in \eqref{eq-intro-heat} for $D_i$, and $f_{A_i}$ as in \eqref{eq-intro-wreg} for $A_i$, for each factor $i=1,2$ of the tensor product, see Lemma~\ref{lem-prodstate}.
In general, only if both $A_1$ and $A_2$ are strongly regular for the individual spectral triples
the state will factorize as \(\Tt(A_1\otimes A_2)=\Tt_1(A_1)\Tt_2(A_2)\). Here $\Tt_i$ denotes the spectral state of \((\Aa_i,\HS_i,D_i)\), $i=1,2$ (see Corollary~\ref{cor-prodstrongreg}).
It is easy to build examples for which this equality fails for more general operators: for example $\Tt_1(A_1)=\Tt_2(A_2)=0$ and $\Tt(A_1\otimes A_2)\neq 0$.

\medskip

In Section~\ref{sec-ST-Bratteli} we study spectral triples associated with a stationary Bratteli diagram, that is for the C$^\ast$-algebra of continuous functions on the Cantor set 
of (half-) infinite paths on a finite oriented graph.
These depend on the matrix $A$ encoding the edges between two levels in the diagram (called here a {\em graph matrix} and assumed to be primitive), a parameter $\rho\in(0,1)$ to account for self-similar scaling, and a horizontal structure $\hat\Hh$ (a set of edges linking the edges of the Bratteli diagram). 
We determine the spectral information of such spectral triples.
In Theorem~\ref{thm-ST} we derive the Connes-distance, and show under which conditions it yields the Cantor topology on the path space.
We compute the zeta-function $\zeta(z)=\TR(|D|^{-z})$ and the expansion of the heat-kernel.

\noindent {\bf Theorem} (Theorems~\ref{thm-zeta}, and~\ref{thm-heatkernel}, and Remark~\ref{rem-heatkernel} in the main text.) {\em Consider a spectral triple associated with a stationary Bratteli diagram with graph matrix $A$ and parameter $0<\rho<1$. 
Assume that $A$ 
is diagonalizable with eigenvalues $\lambda_1, \cdots,\lambda_p$.
\begin{itemize}
\item
The zeta-function $\zeta$ extends to a meromorphic function on $\CM$ which is invariant under translation \(z\mapsto z+\frac{2\pi \imath}{\log \rho}\).
It has only simple poles and these are at \(\frac{\log\lambda_j+2\pi \imath k}{-\log \rho}, \: k\in \ZM, j=1,\ldots p\).
In particular, the spectral dimension (abscissa of convergence of $\zeta$) is equal to $s_0=\frac{\log \pf}{\log \rho}$, where $\pf$ is the Perron-Frobenius eigenvalue of $A$.
The residue at the pole \(\frac{\log\lambda_j+2\pi \imath k}{-\log \rho}\) is given by $\frac{C^j_{\hat\Hh}\lambda_j}{-\log\rho}$. 
\item The Seeley expansion of the heat-kernel is given by
\[
 \TR(e^{-t D^2}) = \sum_{j : |\lambda_j| > 1} C^{j}_{\hat\Hh}  \, 
\f_{-2\log \rho,\log \lambda_j}({-\log t}) \; t^{\frac{\log \lambda_j}{2\log\rho}}
\ +C^{j_0}_{\hat\Hh}\frac{-\log t}{-2\log \rho}  + h(t) \,
\]
where $h$ is entire, 
$\f_{r,a}$ is an $r$-periodic smooth function, and $j_0$ is such that $\lambda_{j_0}=1$.
\end{itemize}
}

\smallskip

\noindent The constants $C^{j}_{\hat\Hh}$ are given in (\ref{eq-CH}), they
depend on the choice of horizontal edges $\hat\HE$.
The function $\f_{r,a}$ is explicitly given in equations~\eqref{eq-fnper1} and~\eqref{eq-fnper2}, and its average over a period is \(\bar \f_{r,a}=\frac{1}{r} \Gamma(\frac{a}{r})\).
If $A$ is not diagonalizable then $\zeta$ has poles of higher order and the heat-kernel expansion is more involved (with powers of $\log (t)$ depending on the order of the poles) see Remark~\ref{rem-zeta} and Theorem~\ref{thm-heatkernel}.

\medskip

In Section~\ref{sec-tilings} we apply our findings to substitution tiling spaces $\Omega_\Phi$. 
We consider geometric substitutions of the simplest form, as in \cite{Grunbaum}, which are defined by a decomposition rule followed by a rescaling, i.e.\ each prototile is decomposed into smaller tiles, which, when stretched by a common factor $\theta>1$ (the dilation factor) are congruent to some original tile. The result of the substitution on a tile is called a supertile (and by iteration then an $n$-th  order supertile). 
If one applies only the decomposition rule one obtains smaller tiles, which we call microtiles. 

The substitution induces a hyperbolic action on $\Omega_\Phi$. Our approximating graph will be invariant under this action. But $\Omega_\Phi$ carries a second action, that by translation of the tilings. Although the translation action will not play a direct role in the construction of the spectral triple, it will be crucial in Section~\ref{ssec-DirForm} to define a domain for the Dirichlet forms. 

The approximating graph for $\Omega_\Phi$ is constructed with the help of {\em doubly} infinite paths over the substitution graph. {\em Half} infinite paths describe its canonical transversal. 
We use this structure to construct a spectral triple for $C(\Omega_\Phi)$ essentially as a tensor product of two spectral triples, one obtained from the 
substitution graph and the other from the reversed substitution graph. 
Indeed, the first of the two spectral triples describes the transversal and the second the longitudinal part of $\Omega_\Phi$. Since the graph matrix of the reversed graph is the transpose of the original graph matrix we will have to deal with only one set of eigenvalues $\lambda_1,\cdots,\lambda_p$. It turns out wise, however, to keep two dilation parameters $\rho_{tr}$ and $\rho_{lg}$ as independent parameters, although they will later be related to the dilation factor $\theta$ of the substitution. We obtain: 

\noindent {\bf Theorem} (Theorem~\ref{thm-STOmega} in the main text.) {\em 
The spectral triple for $C(\Omega_\Phi)$ is finitely summable with spectral dimension
\[
s_0  = \frac{d\log\theta}{-\log\rtr} +
\frac{d\log\theta}{-\log\rlg}\,, 
\]
which is the sum of the spectral dimensions of the triples associated with the transversal 
and to the longitudinal part.

The zeta function $\zeta(z)$ has a simple pole at $s_0$ with positive residue.

The spectral measure  is equal to the unique invariant probability measure on $\Omega_\Phi$.
}
\medskip

We discuss in Section~\ref{ssect-Pisot} the particularities of Pisot substitutions. These are substitutions for which the dilation factor $\theta$ is a Pisot number: an algebraic integer greater than $1$ all of whose Galois conjugates have modulus less than $1$. Their dynamical system
$(\Omega_\Phi,\RM^d)$ factors onto an inverse limit of $dJ$-tori, 
its maximal equicontinuous factor $\hat E$, where $J$ is the algebraic
degree of $\theta$. The substitution induces a hyperbolic homeomorphism on that inverse limit which allows us to split the tangent space at each point into a stable and an unstable subspace, $S$ and $U$. The
latter is $d$-dimensional and can be identified with the space in which the tiling lives. $S$ can be split further into eigenspaces of the hyperbolic map, namely $S = S_1+S_2$ where $S_2$ is the direct sum of eigenspaces to the Galois conjugates of $\theta$ which are next to leading in modulus,
that is have maximal modulus among the Galois conjugates which are distinct from $\theta$.
This prepares the ground for the study of  Dirichlet forms and Laplacians on Pisot substitution tiling spaces in Section~\ref{ssec-DirForm}. The main issue is to find a domain for the bilinear form on $C(\Omega_\Phi)$ defined by
\[
 Q(f,g) = \Tt\bigl( [D,\pi(f)]^\ast [D,\pi(g)] \bigr) \,.
\]
$Q$ decomposes into two forms $Q_{tr}$ and $Q_{lg}$, a transversal and a longitudinal one, which turn out to be Dirichlet forms on a suitable core, once the parameters have been fixed to $\rtr=|\theta_2|$ and $\rlg=\theta^{-1}$, where $\theta_2$ is a next to leading Galois conjugate of $\theta$. Our main theorem is the following:

\noindent {\bf Theorem} (Theorem~\ref{thm-DirForm} in the main text.) 
{\em Consider a Pisot substitution tiling of $\RM^d$ with Pisot number $\theta$ of degree $J$.
Suppose that the tiling dynamical system has purely discrete dynamical spectrum.
Let $\theta_2,\cdots \theta_L$ be the subleading conjugates of $\theta$ so that in particular 
$\theta_j=|\theta_2|e^{\imath \alpha_j}$ for $2\le j\le L\le J$.
Assume that for all $j\neq j'$ one has
\(
 \alpha_j - \alpha_{j'} + 2\pi k + 2\pi \frac{\log|\theta_2|}{\log\theta} k' \neq 0\,, \forall k,k' \in \ZM \,.
\)
Then the space of finite linear combinations of dynamical eigenfunctions is a core for $Q$ on which it is closable.
Furthermore, $Q = Q_{tr} + Q_{lg}$, and $Q_{tr/lg}$ has generator $\Delta_{tr/lg}=\sum_{h\in \hat\Hh_{tr/lg}}\Delta_{tr/lg}^h$ given on an eigenfunction $f_\beta$ to eigenvalue $\beta$ by}
\begin{eqnarray*}
\Delta_{lg}^h f_\beta &=&  -c_{lg}(2\pi)^2 \freq(t_{s^2(h)})  \beta(a_h)^2 f_\beta ,\\
\Delta_{tr}^h f_\beta &=&  -c_{tr}(2\pi)^2 \freq(t_{s^2(h)})  \langle \tilde{r_h}^\star,\beta \rangle^2_{\RM^{dJ}} f_\beta .
\end{eqnarray*}
We explain the notation as far as possible without going into details. 
A longitudinal horizontal edge $h\in \hat\Hh_{lg}$ encodes a vector of translation between two tiles in some supertile. Whereas 
a transversal horizontal edge $h\in\hat\Hh_{tr}$ encodes a vector $r_h$ which should be thought of as a return vector between supertiles of a given type sitting in an even larger supertile~--~hence as a large vector~--~a longitudinal horizontal edge $h\in\hat\Hh_{lg}$ stands for a vector of translation $a_h$   
in a microtile between even smaller microtiles~--~so is a small vector. $t_{s^2(h)}$ is the tile (up to similarity) from which the translation encoded by $h$ starts and $\freq(t_{s^2(h)})$ its frequency in the tiling. By definition an eigenvalue is an element $\beta\in{\RM^d}^*$ for which exists a function $f_\beta:\Omega_\Phi\to\CM$ satisfying $f_\beta(\omega+a) = e^{2\pi i \beta(a)} f_\beta(\omega)$ (we write the translation action simply by $(a,\omega) \mapsto \omega+a$) but the geometric construction discussed in Section~\ref{ssect-Pisot} allows us to view $\beta$ also as an element of $S\oplus U$ so that $\beta(a) = \langle \tilde a,\beta\rangle$ ($\langle\cdot,\cdot\rangle$ is a scalar product on $S\oplus U$).  
Here we wrote $\tilde a$ for the vector in $U$ corresponding to $a$ via the identification of $U$ with the space in which the tiling lives. Finally ${}^\star:U\to S_2\subset S$ is the reduced star map. This is Moody's star map followed by a projection onto $S_2$ along $S_1$. The values for the constants $c_{tr}$ and $c_{lg}$ are given in (\ref{eq-ctr}) and (\ref{eq-clg}).

The transverse Laplacian can therefore be seen as a Laplacian on the maximal equicontinuous factor $\hat E$.
The longitudinal Laplacian can be written explicitly on $\Omega_\Phi$, and turns out to be a Laplacian on the leaves, namely it reads $\Delta_{lg} = c_{lg} \nabla_{lg}^\dagger \Kk \nabla_{lg}$, where $\nabla_{lg}$ is the longitudinal gradient, and $\Kk$ a tensor, see equation~\eqref{eq-lgLaplace}.

\section{Preliminaries for spectral triples}
\label{sec-prelim}

A spectral triple  $(\Aa, \HS, D)$ for a unital $C^\ast$-algebra $\Aa$ is given by
a Hilbert space $\HS$ carrying a faithfull representation $\pi$ of $\Aa$ by bounded operators, and
an unbounded self-adjoint operator $D$ on $\HS$ with compact resolvent such that, the set of $a\in \Aa$ for which the commutator $[D,\pi(a)]$ extends to a bounded operator on $\HS$ forms a dense subalgebra $\Aa_0 \subset \Aa$.
The operator $D$ is referred to as the Dirac operator. 
In all examples here it will be assumed to be invertible, with compact inverse.
The spectral triple $(\Aa, \HS, D)$ is termed {\em even} if there exists
a $\ZM / 2$-grading operator $\chi$ on $\HS$ which commutes with $\pi(a)$, $a\in \Aa$, and anticommutes with $D$. 

We will consider here the case of commutative $C^\ast$-algebras, $\Aa=(C(X), \|\cdot \|_\infty)$, of continuous functions over a compact Hausdorff space $X$ with the sup-norm, so we may speak about a spectral triple for the space $X$. The spaces we consider will be far from being manifolds and our main interest lies in the differential structure defined by the spectral triple. More specifically we restrict our attention to the Laplace operator(s) defined by it. The approach to defining Laplace operators via spectral triples has been considered earlier, for fractals by \cite{GI03,GI05,La97} and for ultrametric Cantor sets and tiling spaces in \cite{PB09,JS10,KS10}.

\subsection{Zeta function and heat kernel}
Since the resolvent of $D$ is supposed compact $\TR(|D|^{-s})$ 
can be expressed as a Dirichlet series in terms of the
eigenvalues of $|D|$.\footnote{For simplicity we suppose (as will be the case
in our applications) that $ \ker(D)$ is trivial, otherwise we would
have to work with $\TR_{ \ker(D)^\perp }(|D|^{-s})$ or remove the
kernel of $D$ by adding a finite rank perturbation.}
The 
spectral triple is called {\em finitely summable} if the Dirichlet series is summable for some 
$s\in\RM$ and hence defines a function
\[
\zeta(z) = \TR(|D|^{-z})\,,
\]
on some half plane $\{z\in\CM: \Re(z)>s_0\}$ which is called the {\em
  zeta-function} of the spectral triple. The smallest possible value
for $s_0$ in the above (the {\em abscissa of convergence} of the Dirichlet
series) is called the {\em metric dimension} of the spectral triple.
We call $\zeta$ {\em simple} if $\lim_{s\to s_0^+}(s-s_0)\zeta(s)$
exists. This is for instance the case if $\zeta$ can be
meromorphically extended and then has a simple pole at $s_0$.
We will refer then to the meromorphic extension also simply as the
zeta function of the triple.

Another quantity to look at is the heat kernel $e^{-tD^2}$ of the
square of the Dirac operator. Thanks to the Mellin transform
$$ \Gamma(s)\mu^{-2s} = \int_0^\infty e^{-t\mu^2} t^{s-1} dt,$$
where $\mu>0$ and \(\Gamma(s)=\int_0^{+\infty} e^{-t}t^{s-1}dt\) is the Gamma function, one can relate
the zeta function to the heat kernel as follows:
$$ \Gamma(s)\zeta(2s) = \int_0^\infty \TR(e^{-tD^2}) t^{s-1} dt.$$
This of course makes sense only if $e^{-tD^2}$ is trace class for all
$t>0$, which is anyway a necessary condition for finite
summability. Notice that the trace class condition implies also that 
$s\mapsto \int_\delta^\infty \TR(e^{-tD^2})t^{s-1}dt$ is holomorphic
for all $\delta>0$. 

The above formula is particularily usefull if one knows the asymptotic
expansion of $\TR(e^{-tD^2})$ at $t\to 0$, or only its leading term.\footnote{A function $f:\R^{>0}\to \CM$ is asymptotically equivalent
to  $g:\R^{>0}\to \R$ at $t\to 0$, written $f\asym g$, if  
$|f - g|=o(|g|)$.
$f=O(g)$ means that $\exists M>0,\exists \delta>0,\forall 0<t<\delta:
  |f(t)|\leq M|g(t)|$, and $f=o(g)$ 
  means that 
  $\forall\epsilon>0,\exists\delta>0,\forall 0<t<\delta:
  |f(t)|\leq   \epsilon|g(t)|$.}
It is well known that the form of the asymptotic expansion is related to the singularites of the zeta-function \cite{CoMa08,Iochum}. For instance, an expansion of the form 
 $$\TR(e^{-tD^2}) = \sum_\alpha c_\alpha t^{\alpha} + h(t),$$
with $\Re (\alpha)<0$, $c_\alpha\in \CM$ and $h$ a function which is bounded at $0$ (in particular without logarithmic terms like $\log t$)
implies that the zeta-function has a simple pole at $-2{\alpha}$ with
residue equal to $2c_\alpha/\Gamma(-{\alpha})$ and
is regular at $0$ \cite{CoMa08}. We will see however, that the
situation is quite different in our case where we have to replace
$c_\alpha$ by functions that are periodic in $\log t$.    
Recall the Laplace transform of a function $f$ at $s$ :
\begin{equation}
\label{eq-Laplace}
\Ll[f](s):=\int_0^\infty f(x)e^{-sx}dx \,.
\end{equation}
We assume therefore in the sequel that the
asymptotic behaviour of  the trace of the heat-kernel is given by 
\begin{equation}
\label{eq-asym}
\TR(e^{-tD^2})\stackrel{t\to 0}{\sim} f(-\log t) t^{\alpha} 
\end{equation}
where $\alpha<0$ and $f:\R^{\geq 0}\to \R$ is a bounded, locally 
integrable function for which $\lim_{s\to 0}s\Ll[f](s)$ exists and is
different from $0$.
This is the weakest assumption needed for $\zeta$ to be simple and have non-negative abscissa of convergence, and to be able to compute its residui explicitly, as the following Lemma shows (see also Remark~\ref{rem-freg} for a regular example of such $f$). 
\begin{lemma}\label{lem-heat-zeta} If the trace of the heat-kernel
satisfies (\ref{eq-asym})  
then $\zeta$ is simple, has abscissa of convergence
\[s_0 = -2\alpha \quad\mbox{and}\quad
\frac12\Gamma(\frac{s_0}2)\lim_{s\to s_0^+}(s-s_0)\zeta(s) =
\lim_{s\to 0}s\Ll[f](s).\] 
If, moreover, $\mathcal L[f](s)$ admits a meromorphic extension with
simple pole at $0$ and 
$\TR(e^{-tD^2}) -  f(-\log t) t^{\alpha}=O(t^{\beta})$ 
(with $\beta>\alpha$) then
$\zeta(s)$ has a simple pole at $s_0=-2\alpha$ and hence 
$\frac12\Gamma(\frac{s_0}2)\mbox{\rm Res}(\zeta,s_0)=\mbox{\rm
  Res}(\Ll[f],0) = \lim_{s\to 0}s\Ll[f](s)$. 
\end{lemma}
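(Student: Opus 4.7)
My plan is to convert the Mellin identity
\[
\Gamma(s)\zeta(2s) = \int_0^\infty \TR(e^{-tD^2})\, t^{s-1}\, dt
\]
into a Laplace transform identity tailored to~(\ref{eq-asym}), via the change of variable $u=-\log t$. I first split the integral at $t=1$. The tail $H(s)=\int_1^\infty \TR(e^{-tD^2})\,t^{s-1}dt$ is entire because $|D|$ has a smallest nonzero eigenvalue $\mu_0 > 0$ and hence $\TR(e^{-tD^2}) \le e^{-(t-1)\mu_0^2}\TR(e^{-D^2})$ for $t\ge 1$. On the head the substitution gives $\int_0^1 \TR(e^{-tD^2})t^{s-1}dt = \int_0^\infty \TR(e^{-e^{-u}D^2}) e^{-us}du$.

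I then decompose $\TR(e^{-e^{-u}D^2}) = f(u)e^{-u\alpha} + r(u)$, where $r(u) = o(f(u)e^{-u\alpha})$ as $u\to\infty$ by~(\ref{eq-asym}) and $r$ is locally integrable on $[0,\infty)$. Since $|f|\le M$, the first summand integrates to $\Ll[f](s+\alpha)$ on $\Re(s) > -\alpha$, which leads to the identity
\[
\Gamma(s)\zeta(2s) = H(s) + \Ll[f](s+\alpha) + \Ll[r](s).
\]
The same boundedness gives $\TR(e^{-tD^2}) \le C t^\alpha$ near $0$, so $\zeta$ is already holomorphic for $\Re(2s) > s_0$, forcing the abscissa of convergence to be at most $s_0=-2\alpha$.

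The key technical step, and the expected main obstacle, is to prove
\[
\lim_{s\to -\alpha^+}(s+\alpha)\Ll[r](s) = 0,
\]
as the only input on $r$ is a pointwise little-$o$ bound, which is a priori weaker than any integrability statement at $\Re(s)=-\alpha$. I would proceed by an $\epsilon$-$U$ splitting: given $\epsilon > 0$, choose $U$ with $|r(u)| \le \epsilon M e^{-u\alpha}$ for $u \ge U$. The contribution of $[0,U]$ to $\Ll[r](s)$ is bounded and is killed by the prefactor $(s+\alpha) \to 0$; the contribution of $[U,\infty)$ is bounded by $(s+\alpha)\epsilon M \int_U^\infty e^{-u(s+\alpha)}du = \epsilon M e^{-U(s+\alpha)} \to \epsilon M$, and sending $\epsilon\to 0$ completes the argument.

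Assembly: since $\Gamma$ and $H$ are holomorphic and $\Gamma(-\alpha) \ne 0$,
\[
\lim_{s\to -\alpha^+} (s+\alpha)\zeta(2s) = \frac{1}{\Gamma(-\alpha)}\lim_{s\to 0^+} s\,\Ll[f](s),
\]
and passing to $z=2s$ (so $z-s_0 = 2(s+\alpha)$) yields the first statement together with the factor $\tfrac{1}{2}\Gamma(s_0/2)$, and pins the abscissa to exactly $s_0$. For the refined statement, the bound $\TR(e^{-tD^2}) - f(-\log t)t^\alpha = O(t^\beta)$ with $\beta>\alpha$ translates to $r(u) = O(e^{-u\beta})$, so $\Ll[r]$ converges absolutely and is holomorphic on $\Re(s) > -\beta$, in particular at $s=-\alpha$. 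Combined with the meromorphic extension of $\Ll[f]$ with a simple pole at $0$, the above identity upgrades to a genuine simple pole of $\zeta$ at $s_0$, and the same computation gives the residue relation $\tfrac{1}{2}\Gamma(s_0/2)\,\mathrm{Res}(\zeta,s_0) = \mathrm{Res}(\Ll[f],0)$.
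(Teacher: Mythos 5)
Your proof is correct and follows essentially the same route as the paper: the Mellin identity $\Gamma(s)\zeta(2s)=\int_0^\infty \TR(e^{-tD^2})t^{s-1}dt$, isolation of the leading term as $\Ll[f](s+\alpha)$, and control of the remainder through the little-$o$ bound (your $\epsilon$--$U$ split in the Laplace variable is just the paper's $\epsilon$-dependent cut-off $\delta$ in the $t$-variable, and the holomorphic tail is handled identically). The refined statement is also treated as in the paper, using the $O(t^\beta)$ bound to make the remainder term holomorphic past $s=-\alpha$ and the assumed meromorphic extension of $\Ll[f]$ to produce the simple pole and the residue identity.
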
  
\begin{proof} 
We adapt the arguments of \cite{Iochum}.
Let 
$h(t) = \TR(e^{-tD^2}) - f(-\log t) t^{\alpha}$ and $M=\sup_x |f(x)|$.
Then for all $\epsilon>0$ exists $\delta\leq 1$ such that
$|h(t)|\leq \epsilon M t^\alpha$ if $t<\delta$. In particular,
$H_\delta(s) := \int_0^\delta h(t) t^{s-1}dt $ satisfies
$|H_\delta(s)| \leq \epsilon M\frac{\delta^{\alpha+s}}{\alpha+s}$,
provided $\alpha+s>0$. 
Now, again for $\alpha+s>0$
$$ \Gamma(s)\zeta(2s) = \int_{0}^\infty\TR(e^{-tD^2})t^{s-1}dt =
\int_0^\delta f(-\log t) t^{\alpha+s-1}dt
+ H_\delta(s) + g_\epsilon(s)$$
where \(g_\epsilon(s) = \int_{\delta}^\infty\TR(e^{-tD^2})t^{s-1}dt\) is holomorphic in $s$.
This shows that $\zeta(2s)$ is finite for $s>-\alpha$.
Furthermore
\[\lim_{s\to  -\alpha^+}(\alpha+s)\int_0^\delta f(-\log t) t^{\alpha+s-1}dt =
\lim_{s\to 0^+}s \int_{-\infty}^{\log \delta} f(-\tau)e^{\tau s}d\tau =
\lim_{s\to 0^+}s \Ll[f](s)\]
where we have used in the last equation that $\lim_{s\to 0^+}s
\int_{\log \delta}^0 f(-\tau)e^{\tau s}d\tau = 0 $. 
Since $\epsilon>0$ is arbitrary we conclude that 
 $$\lim_{s\to -\alpha^+}(\alpha+s)\Gamma(s)\zeta(2s) = \lim_{s\to 0^+}s \Ll[f](s).$$
Hence $s_0 = -2\alpha$ is the abscissa of convergence. 

Now if $h(t)$ is of order $t^\beta$ we can find
$M>0$ and $\delta>0$ such that $|h(t)t^{-\beta}|\leq M$ if
$0<t<\delta$. If $\beta>\alpha$ the function $t\mapsto t^{s+\beta-1}$
is integrable on $(0,\delta)$ as long as $s$ lies in a sufficiently
small neighbourhood of $-\alpha$. Since $s\mapsto t^{s+\beta-1}$ is
holomorphic for
all $t>0$ we find that
$H_\delta(s) = \int_0^\delta (h(t)t^{-\beta}) t^{s+\beta-1}dt$ is
holomorphic near $s=-\alpha$ which shows that $(\alpha+s)\zeta(2s)$ is
holomorphic there, too. 
Thus $\zeta$ has a simple
pole at $-2\alpha$ and we have the above stated formula for its residue.
\end{proof} 
\begin{rem} 
\label{rem-freg}
If $f(\tau) = e^{ia\tau}$ then $\mathcal L[f](s) = \frac{1}{s-ia}$. Thus 
if $f$ is the restriction of a periodic function of class $C^1$ then
upon using its representation as a Fourier series we see that 
$s\mathcal L[f](s)$ extends to an analytic function around $0$ and 
$$\lim_{s\to 0}s\mathcal L[f](s) = \bar f$$
the mean of $f$.
\end{rem}

\subsection{Spectral state}
\label{ssec-specstate}
Given a bounded operator $A$ on $\HS$ such that $|D|^{-s_0}A$ is in the Dixmier ideal
we consider the expression
\[
 \Tt(A) = \TR_\omega(|D|^{-s_0}A) / \TR_\omega(|D|^{-s_0})
 \]
which depends a priori on the choice of Dixmier trace
$\TR_\omega$. With a little luck, however, 
$\lim_{s\to s_{0}^+} \frac1{\zeta(s)} \TR(|D|^{-s}A)$ exists and then \cite{Co94}
 \[
 \Tt(A) = \lim_{s\to s_{0}^+} \frac1{\zeta(s)} \TR(|D|^{-s}A).
 \]
We provide here a criterion for that. Note that the Mellin transform
allows us to write 
\[  \TR(|D|^{-s}A) = \frac1{\Gamma(\frac s2)}
 \int_0^\infty  \TR(e^{-t D^2} A)t^{\frac s2-1}dt.\] 
We call $A\in\Bb(\HS)$ {\em strongly regular} if there exists a number
$c_A$ such that  
\[
\TR(e^{-tD^2}A)-c_A
\TR(e^{-tD^2})=o\bigl(\TR(e^{-tD^2})\bigr).
\]
If $c_A
\neq 0$ one can thus say that $\TR(e^{-tD^2}A)\asym c_A
\TR(e^{-tD^2})$. In the context in which the heat kernel satisfies
(\ref{eq-asym}) it is useful to consider the notion of {\em weakly regular}
operators  $A\in\Bb(\HS)$. These are operators which satisfy
\begin{equation}
\label{eq-wreg}
\TR(e^{-tD^2}A)\asym f_A(-\log t) t^\alpha
\end{equation}
where $\alpha$ is the same as in (\ref{eq-asym}) and $f_A:\R^{\geq 0}\to \CM$ is a bounded, non-zero, locally integrable function for which $\lim_{s\to 0}s\Ll[f_A](s)$ exists.
Clearly, strongly regular operators are weakly regular and $f_A=c_A f$ in this case, where $f$ is given in equation~\eqref{eq-asym} (one actually has $c_A=\Tt(A)$, see Corollary~\ref{cor-streg}).
\begin{lemma}
\label{lem-specstate} 
Assume that
the trace of the heat-kernel satisfies (\ref{eq-asym}) and that  
$A\in \Bb(\HS)$ is weakly regular, that is, satisfies (\ref{eq-wreg}). Then
 $\lim_{s\to s_{0}^+} \frac1{\zeta(s)} \TR(|D|^{-s}A)$ exists
 and is equal to  
 $$\Tt(A) = \lim_{s\to 0}
 \frac{\Ll[f_A](s)}{\Ll[f](s)}.$$
\end{lemma}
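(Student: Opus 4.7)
The plan is to adapt the Mellin-transform argument from the proof of Lemma~\ref{lem-heat-zeta}, applied this time to $\TR(e^{-tD^2}A)$ instead of $\TR(e^{-tD^2})$, and then to divide the two resulting asymptotics.

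First I would write
\[
\TR(|D|^{-s}A) \;=\; \frac{1}{\Gamma(s/2)}\int_0^\infty \TR(e^{-tD^2}A)\,t^{s/2-1}\,dt
\]
and split the integral at some $\delta\in(0,1]$. The tail $\int_\delta^\infty \TR(e^{-tD^2}A)\,t^{s/2-1}dt$ is entire in $s$ (since $e^{-tD^2}A$ is trace class and the integrand decays fast at infinity), so it does not contribute to the pole at $s_0=-2\alpha$. On $(0,\delta)$ I would substitute $e(t):=\TR(e^{-tD^2}A)-f_A(-\log t)t^\alpha$, which by weak regularity satisfies $|e(t)|\le \varepsilon\,\|f_A\|_\infty\, t^\alpha$ for $t<\delta$ (where $\delta$ depends on $\varepsilon$). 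For the main term I perform the change of variables $\tau=-\log t$, giving
\[
\int_0^\delta f_A(-\log t)\,t^{\alpha+s/2-1}\,dt
\;=\;\int_{-\log\delta}^{\infty} f_A(\tau)\,e^{-\tau(\alpha+s/2)}\,d\tau
\;=\;\Ll[f_A]\!\bigl(\alpha+\tfrac{s}{2}\bigr)\;-\;R_\delta(s),
\]
where $R_\delta(s)=\int_0^{-\log\delta}f_A(\tau)e^{-\tau(\alpha+s/2)}d\tau$ is bounded near $s=-2\alpha$, so $(s/2+\alpha)R_\delta(s)\to 0$.

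Next I handle the error: $|\int_0^\delta e(t)\,t^{s/2-1}dt|\le \varepsilon\|f_A\|_\infty\,\delta^{\alpha+s/2}/(\alpha+s/2)$ for $s/2+\alpha>0$, so after multiplication by $(\alpha+s/2)$ its contribution is of order $\varepsilon\|f_A\|_\infty\delta^{\alpha+s/2}$, hence can be made arbitrarily small. Combining these pieces yields
\[
\lim_{s\to s_0^+}\bigl(\tfrac{s}{2}+\alpha\bigr)\,\Gamma(\tfrac{s}{2})\,\TR(|D|^{-s}A)
\;=\;\lim_{s'\to 0^+} s'\,\Ll[f_A](s'),
\]
by exactly the same reasoning as in Lemma~\ref{lem-heat-zeta}, and the right-hand side exists by the weak-regularity hypothesis.

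Finally, I apply Lemma~\ref{lem-heat-zeta} to get the analogous formula for $\zeta(s)=\TR(|D|^{-s})$ with $f_A$ replaced by $f$, and take the ratio. The $\Gamma(s/2)$ factors cancel and the common factor $(s/2+\alpha)$ cancels as well, leaving
\[
\lim_{s\to s_0^+}\frac{\TR(|D|^{-s}A)}{\zeta(s)}
\;=\;\frac{\lim_{s'\to 0^+} s'\Ll[f_A](s')}{\lim_{s'\to 0^+} s'\Ll[f](s')}
\;=\;\lim_{s\to 0}\frac{\Ll[f_A](s)}{\Ll[f](s)},
\]
the last equality being a direct consequence of the existence of the two limits in the numerator and denominator (the denominator being nonzero by the standing hypothesis on $f$).

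The only delicate point I foresee is that weak regularity gives an asymptotic equivalence rather than a bound, so the error $e(t)$ must be estimated through $|f_A(-\log t)|\le\|f_A\|_\infty$ before applying the $\varepsilon$--$\delta$ argument; once this observation is in place, the proof is a straightforward parallel of Lemma~\ref{lem-heat-zeta}.
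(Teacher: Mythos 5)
Your proof is correct and follows essentially the same route as the paper: Mellin transform, splitting the integral at $\delta$, bounding the error term via $|f_A(-\log t)|\le \|f_A\|_\infty$ in an $\epsilon$--$\delta$ argument, identifying the main term with $\Ll[f_A]$ at the shifted argument, and invoking Lemma~\ref{lem-heat-zeta} for the denominator. The only (cosmetic) difference is that you extract the limit of $(\tfrac s2+\alpha)\Gamma(\tfrac s2)\TR(|D|^{-s}A)$ separately and then take a quotient of limits, whereas the paper divides directly by $\Gamma(\tfrac s2)\zeta(s)$; the content is identical.
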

\begin{proof} Under the hypothesis, for all $\epsilon$ we can find a $\delta$ such that, if $s>s_0$, 
\[
\left|\int_0^\delta \big(\TR(e^{-t D^2} A) -
  f_A(-\log t) t^{\alpha}\big)t^{\frac s2-1}dt \right|\leq
\epsilon \int_0^\delta |f_A(-\log t)|t^{\alpha+\frac s2-1}dt\leq \epsilon M_A \frac{\delta^{\alpha+\frac s2}}{\alpha+\frac s2}
\]
where $M_A$ is an upper bound for $|f_A(-\log t)|$. 
Since $\int_\delta^\infty \TR(e^{-t D^2} )t^{\frac s2-1}dt$ and hence also $\int_\delta^\infty \TR(e^{-t D^2} A)t^{\frac s2-1}dt$ are finite for all $\delta>0$ we get ($\alpha = -\frac{s_0}2$)
\[\lim_{s\to s_{0}^+} \frac1{\Gamma(\frac s2)\zeta(s)}\left| \int_0^\infty \TR(e^{-t D^2} A)t^{\frac s2-1}dt 
- \int_0^1 f_A(-\log t)t^{\frac {s-s_0}2-1}dt 
 \right|  \leq \epsilon \tilde M_A.\] 
 Notice that $\int_0^1 f_A(-\log t)t^{s-1}dt = \Ll[f_A](s)$. 
 Since  $\epsilon$ was arbitrary we conclude that
\[ \lim_{s\to s_{0}^+} \frac1{\zeta(s)} \TR(|D|^{-s}A) =
\lim_{s\to s_{0}^+} \frac{  \Ll[f_A](\frac{s-s_0}2)}{\Gamma(\frac s2)\zeta(s)}
= \lim_{s\to 0}
 \frac{\Ll[f_A](s)}{\Ll[f](s)}
. \] 
\end{proof}
\begin{coro}
\label{cor-streg}
If $A\in\Bb(\HS)$ is strongly regular, then  \(\TR(e^{-tD^2}A)\asym \Tt(A) \TR(e^{-tD^2}A)\).
In other words, the functions in equations~\eqref{eq-asym} and~\eqref{eq-wreg} satisfy $f_A=\Tt(A) f$.
\end{coro}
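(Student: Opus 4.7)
The plan is to show that strong regularity of $A$ forces $f_A = c_A f$, and then to identify the constant $c_A$ with the spectral state $\Tt(A)$ by applying the previous lemma.

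First I would unfold the definitions. By hypothesis (\ref{eq-asym}) we have $\TR(e^{-tD^2}) \asym f(-\log t)\, t^\alpha$, and by strong regularity of $A$ there exists $c_A$ with
\[
\TR(e^{-tD^2}A) - c_A \TR(e^{-tD^2}) = o\bigl(\TR(e^{-tD^2})\bigr).
\]
Combining these two relations (adding $c_A\TR(e^{-tD^2})$ back and using transitivity of asymptotic equivalence with a nonzero leading term) yields
\[
\TR(e^{-tD^2}A) \asym c_A f(-\log t)\, t^\alpha.
\]
This shows that $A$ is weakly regular in the sense of (\ref{eq-wreg}), with the candidate $f_A = c_A f$: this function is bounded, locally integrable, and satisfies $\lim_{s\to 0^+} s\Ll[f_A](s) = c_A \lim_{s\to 0^+} s\Ll[f](s)$, which exists by the assumption made below (\ref{eq-asym}). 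The only mild caveat is that $f_A$ must be \emph{non-zero} for the definition of weak regularity to apply literally; if $c_A = 0$ the same argument runs through with $\TR(e^{-tD^2}A) = o(f(-\log t)t^\alpha)$ and the conclusion $f_A = 0 = \Tt(A) f$ is trivial.

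Next I would invoke Lemma~\ref{lem-specstate}. Since $A$ is weakly regular with $f_A = c_A f$, the lemma gives
\[
\Tt(A) = \lim_{s\to 0^+} \frac{\Ll[f_A](s)}{\Ll[f](s)} = \lim_{s\to 0^+} \frac{c_A \Ll[f](s)}{\Ll[f](s)} = c_A,
\]
the last equality being valid because $\Ll[f](s)$ does not vanish in a right neighbourhood of $0$ (as $s\Ll[f](s)$ has a finite nonzero limit). Hence $c_A = \Tt(A)$, so $f_A = \Tt(A) f$ and consequently $\TR(e^{-tD^2}A) \asym \Tt(A)\TR(e^{-tD^2})$, as claimed.

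I do not expect a serious obstacle here: the corollary is essentially a bookkeeping consequence of Lemma~\ref{lem-specstate}. The only subtlety worth flagging is the degenerate case $\Tt(A) = 0$, where one must be careful that the asymptotic equivalence $\asym$ is interpreted in the $o(\TR(e^{-tD^2}))$ sense rather than as a multiplicative leading-order statement; in that case the identification $f_A = 0$ still holds, and the formula $f_A = \Tt(A)f$ remains valid.
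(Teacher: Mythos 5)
Your proof is correct and follows essentially the same route as the paper: observe that strong regularity gives weak regularity with $f_A=c_A f$, then use linearity of the Laplace transform and Lemma~\ref{lem-specstate} to conclude $c_A=\Tt(A)$. Your extra remark about the degenerate case $c_A=0$ (where the non-vanishing requirement in the definition of weak regularity does not literally apply) is a sensible precaution that the paper silently skips, but it does not change the argument.
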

\begin{proof}
If $A$ is strongly regular, then it is also weakly regular with $f_A=c_A f$.
The Laplace transform is linear so \(\Ll[f_A](s)=c_A\Ll[f](s)\), and Lemma~\ref{lem-specstate} then implies $c_A=\Tt(A)$.
\end{proof}
Order the eigenvalues of $|D|$ increasingly (without counting
muliplicity) and let $F_n$ be the $n$-th eigenspace of $|D|$.  
\begin{coro}
\label{cor-cesar}
Let $A\in\Bb(\HS)$ and 
\(\bar A_n=\frac{\TR_{{F}_n} (A|_{{F}_n})}{\dim F_n} \). If the limit
\[
\bar A=\lim_{n\to\infty}\bar A_n
\]
exists then $A$ is strongly regular and $\Tt(A)=\bar A$.
\end{coro}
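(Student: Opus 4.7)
The plan is to expand both heat traces as sums over the eigenspaces of $|D|$ and then carry out an Abelian-type estimate exploiting that $\bar A_n \to \bar A$.

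First I would write, using the spectral decomposition $\HS = \bigoplus_n F_n$,
\[ \TR(e^{-tD^2} A) = \sum_n e^{-t\mu_n^2}\, \TR_{F_n}(A|_{F_n}) = \sum_n d_n\, \bar A_n\, e^{-t\mu_n^2}, \quad \TR(e^{-tD^2}) = \sum_n d_n\, e^{-t\mu_n^2}, \]
where $\mu_n$ is the $n$-th (distinct) eigenvalue of $|D|$ and $d_n = \dim F_n$. Subtracting the second (multiplied by $\bar A$) from the first gives
\[ \TR(e^{-tD^2} A) - \bar A\, \TR(e^{-tD^2}) = \sum_n d_n (\bar A_n - \bar A)\, e^{-t\mu_n^2}. \]

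Next I would exploit the hypothesis $\bar A_n \to \bar A$ to control this sum. Given $\epsilon > 0$, choose $N$ such that $|\bar A_n - \bar A| < \epsilon$ for all $n > N$. Splitting the sum at $N$ and using positivity of the terms $d_n e^{-t\mu_n^2}$,
\[ \bigl| \TR(e^{-tD^2} A) - \bar A\, \TR(e^{-tD^2}) \bigr| \le \sum_{n \le N} d_n |\bar A_n - \bar A|\, e^{-t\mu_n^2} + \epsilon \sum_{n > N} d_n e^{-t\mu_n^2}. \]
The first sum is bounded uniformly in $t \in (0,1]$ by the constant $C_N := \sum_{n \le N} d_n |\bar A_n - \bar A|$, and the second is bounded by $\epsilon\, \TR(e^{-tD^2})$. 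Since $\TR(e^{-tD^2}) \asym f(-\log t) t^\alpha$ with $\alpha < 0$ and $\lim_{s \to 0^+} s\Ll[f](s) \neq 0$, we have $\TR(e^{-tD^2}) \to \infty$ as $t \to 0^+$, hence the finite-head contribution $C_N/\TR(e^{-tD^2}) \to 0$. Therefore
\[ \limsup_{t \to 0^+} \frac{|\TR(e^{-tD^2}A) - \bar A\, \TR(e^{-tD^2})|}{\TR(e^{-tD^2})} \le \epsilon, \]
and since $\epsilon$ was arbitrary this $\limsup$ is zero. This is exactly the strong regularity condition with $c_A = \bar A$, and Corollary~\ref{cor-streg} then identifies $c_A = \Tt(A)$, giving $\Tt(A) = \bar A$.

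The whole argument is essentially an Abel summation in disguise, so there is no real obstacle; the only point requiring care is checking that the normalising trace $\TR(e^{-tD^2})$ genuinely diverges at $t \to 0^+$, which is ensured by the standing assumption (\ref{eq-asym}) with $\alpha < 0$ and $\lim_{s\to 0^+} s\Ll[f](s) \neq 0$.
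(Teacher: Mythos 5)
Your argument is correct and follows essentially the same route as the paper: the same eigenspace decomposition of both heat traces, the same splitting of the sum at a threshold index using $\bar A_n \to \bar A$, the bound by a constant plus $\epsilon\,\TR(e^{-tD^2})$, and the divergence of $\TR(e^{-tD^2})$ as $t\to 0^+$ to conclude strong regularity with $c_A=\bar A$ and then $\Tt(A)=\bar A$ via Corollary~\ref{cor-streg} (the paper invokes Lemma~\ref{lem-specstate} directly, which amounts to the same thing).
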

\begin{proof}
Write \(c_n = e^{-t\mu_n^2} \dim F_n\) where $\mu_n$ is the $n$-th
eigenvalue of $|D|$. One has
\begin{eqnarray*}
 \TR(e^{-tD^2} A ) - \bar A \, \TR(e^{-tD^2} ) 
& = & \sum_{n\ge 1} (\bar A_n -  \bar A) c_n 
\end{eqnarray*}
Now fix an $\epsilon>0$, and choose and integer $N_\epsilon$ such that \(|\bar A_n -\bar A|\le \epsilon\) for all $n\ge N_\epsilon$. Then the series of the r.h.s.\ can be bound by 
\((\sup_n|\bar A_n -\bar A| )\sum_{n< N_\epsilon} c_{n} + \epsilon \sum_{n\ge N_\epsilon}
c_{n}\). Using   
\(\sum_{n\ge N_\epsilon} c_{n}\leq  \TR(e^{-t D^2})\) we  find that for all $\epsilon$ exists $C_\epsilon$ such that
\[ 
\Bigl| \TR(e^{-tD^2} A ) - \bar A \TR(e^{-tD^2}) \Bigr| \leq  C_{\epsilon} +
\epsilon \TR(e^{-t D^2}) . 
\]
Since $\TR(e^{-t D^2})$ tends to $+\infty$ if $t$ tends to $0$ this
shows that $\TR(e^{-t D^2}A)\stackrel{t\to 0}{\sim}f_A(-\log t)t^\alpha$ with $f_A =
\bar{A} f$. 
Applying
Lemma~\ref{lem-specstate} we see that $\Tt(A)=\bar{A}$.
\end{proof}

In the commutative case, when $\Aa=C(X)$ for a compact Hausdorff space $X$, we are particularly concerned with operators of the form $A =\pi(f)$, for $f\in C(X)$ or for any Borel-measurable function $f$ on $X$.
By Riesz representation theorem the functional $f\mapsto \Tt(\pi(f))$ gives a measure on $X$, which we call the {\em spectral measure}.

\subsection{Laplacians}
\label{ssec-Laplace}
It is tempting to define a quadratic form by 
\begin{equation}
\label{eq-Dirform}
 Q(a,b) = \Tt([D,\pi(a)]^*[D,\pi(b)])
\end{equation}
on elements on which this expression makes sense.
It is however not so easy to determine a domain for such a form.
Our interest here lies in the commutative case, $\Aa=C(X)$, and our question is as follows:
Let $\mu$ be the spectral measure on $X$ and consider the Hilbert
space $L^2(X) = L^2(X,\mu)$. Notice that this is usually not the Hilbert space of
the spectral triple. We embed $\Aa_0$ (continuous functions having bounded
commutator with $D$) into $L^2(X)$, assuming that the measure is faithful.  
Does there exist a core (a dense linear subspace of $L^2(X)$) which is
contained in $\Aa_0$ and on which $Q$ is well-defined, yielding a
symmetric closable quadratic form which satisfies the Markov
property? If $\Dd$ is the domain of the closure of $Q$ then the
general theory guaranties the existence of a positive operator
$\Delta$ such that $Q(f,g) = (f,\Delta g)$ and this operator generates
a Markov process on $L^2(X)$.
We refer to $\Delta$ as a Laplacian.
We emphasize that $\Delta$ will depend on the choice of a domain.
We won't be able to give general answers here, but we look at specific
examples related to self-similarity.

\subsection{Direct sums}
\label{ssec-sums}
The direct sum of two spectral triples $(\Aa_1,\HS_1,D_1)$ and
$(\Aa_2,\HS_2,D_2)$ is the spectral triple $(\Aa,\HS,D)$  given by 
\[
\Aa = \Aa_1\oplus \Aa_2,\quad \HS = \HS_1\oplus\HS_2,\quad D =
D_1\oplus D_2.
\]
with direct sum representation. The zeta function $\zeta$ of the direct sum is clearly the sum of the zeta functions $\zeta_i$ of the summands and thus its abscissa of convergence $s_0$ is equal to the largest abscissa of the two zeta functions $\zeta_i$.
Let $\Tt$ denote the spectral state of the direct sum triple and $\Tt_i$ those of the summands and assume that all zeta-functions are simple.
Then, for regular operators $A_1$ and $A_2$ we have
\begin{equation}
\label{eq-sum-state}
\Tt(A) = \frac1{c_1+c_2} \bigl( c_1\Tt(A_1) + c_2\Tt(A_2) \bigr)
\end{equation}
where $c_i = \lim_{s\to s_0^+} (s-s_0) \zeta_i(s)$.
Notice that $c_1=0$ if the abscissa of convergence of $\zeta_1$ is smaller than that of $\zeta_2$, in which case $\Tt(A)=\Tt(A_2)$
(and similiarily with $1$ and $2$ exchanged: $\Tt(A)=\Tt(A_1)$).
Notice also that $c_1+c_2 = \lim_{s\to s_0^+} (s-s_0) \zeta(s)$.

It is pretty clear that the quadratic form $Q$ used to define the Laplacian 
is the sum of the quadratic forms of the individual triples, again leaving questions about its domain
aside.

\subsection{Tensor products}
\label{ssec-tensorprod}
The tensor product of an even spectral triple $(\Aa_1,\HS_1,D_1)$ with
grading operator $\chi$ and another spectral triple
$(\Aa_2,\HS_2,D_2)$ is the spectral triple $(\Aa,\HS,D)$  given by 
\[
\Aa = \Aa_1\otimes \Aa_2,\quad \HS = \HS_1\otimes\HS_2,\quad D =
D_1\otimes 1 + \chi\otimes D_2.
\]
Notice that $D^2 = D_1^2\otimes 1 + 1\otimes D_2^2$. It follows that the
trace of the heat kernel is multiplicative: $\TR(e^{-tD^2}) = \TR_{\HS_1}(e^{-tD_1^2})\TR_{\HS_2}(e^{-tD_2^2})$. This allows one to obtain information on the zeta function, the spectral state, and the  quadratic form $Q$ of the tensor product.
\begin{lemma}
\label{lem-prod}
Suppose that $\TR_{\HS_1}(e^{-tD_1^2})$ and $\TR_{\HS_2}(e^{-tD_2^2})$ satisfy (\ref{eq-asym}) with $f = f_1$ and $f=f_2$, respectively. 
Suppose that $\lim_{s\to 0} s\mathcal L[f_1f_2](s)$ exists and is non-zero. Then
the metric dimension $s_0$ of the tensor product
spectral triple is the sum of the metric dimensions of the
factors, and 
the zeta function $\zeta$ of the tensor product is simple with 
\[\frac12\Gamma(\frac{s_0}2)\lim_{s\to s_0^+} (s-s_0)\zeta(s) = \lim_{s\to 0}
s\mathcal L[f_1f_2](s).\]
\end{lemma}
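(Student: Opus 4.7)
The plan is to reduce the statement to Lemma~\ref{lem-heat-zeta} applied to the product profile $f_1 f_2$, via the multiplicativity of the heat-kernel trace on the tensor product. First, I recall that since $\chi^2 = 1$ and $\{\chi,D_1\}=0$, a direct expansion gives $D^2 = D_1^2\otimes 1 + 1\otimes D_2^2$ with commuting summands; hence $e^{-tD^2} = e^{-tD_1^2}\otimes e^{-tD_2^2}$ and
\[
\TR_{\HS}(e^{-tD^2}) \;=\; \TR_{\HS_1}(e^{-tD_1^2})\cdot\TR_{\HS_2}(e^{-tD_2^2}),
\]
as already recorded in the paragraph preceding the lemma.

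Second, writing $A_i(t) = \TR_{\HS_i}(e^{-tD_i^2})$ and $B_i(t) = f_i(-\log t)\,t^{\alpha_i}$, I would show that the asymptotic equivalence $A_i \asym B_i$ passes to the product. From the identity
\[
A_1 A_2 - B_1 B_2 \;=\; (A_1 - B_1)\,A_2 \,+\, B_1\,(A_2 - B_2),
\]
the uniform boundedness of the $f_i$ yields $|A_i| = O(|B_i|)$ near $t=0$, so both summands on the right are $o(|B_1 B_2|)$ and one concludes
\[
\TR_{\HS}(e^{-tD^2}) \;\asym\; f_1(-\log t)\,f_2(-\log t)\,t^{\alpha_1+\alpha_2}.
\]

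Third, I would apply Lemma~\ref{lem-heat-zeta} to the tensor-product triple with $f = f_1 f_2$ and $\alpha = \alpha_1 + \alpha_2 < 0$. The product $f_1 f_2$ is bounded and locally integrable since each factor is, and by assumption $\lim_{s\to 0} s\,\Ll[f_1 f_2](s)$ exists and is non-zero, so the hypotheses of Lemma~\ref{lem-heat-zeta} hold. That lemma then gives that $\zeta$ is simple with abscissa of convergence $s_0 = -2(\alpha_1+\alpha_2)$, which coincides with $s_{0,1}+s_{0,2}$ by the same lemma applied to each factor, together with the claimed residue identity
\[
\tfrac{1}{2}\Gamma(s_0/2)\,\lim_{s\to s_0^+}(s-s_0)\,\zeta(s) \;=\; \lim_{s\to 0} s\,\Ll[f_1 f_2](s).
\]

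The only mildly delicate point is the product step: in general $A_i \asym B_i$ does not imply $A_1 A_2 \asym B_1 B_2$ without some size control on the profiles, and here the required control is precisely the boundedness of the $f_i$ that is built into the standing asymptotic class~(\ref{eq-asym}). The rest is a direct repackaging of Lemma~\ref{lem-heat-zeta}.
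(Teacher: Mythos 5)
Your proposal is correct and follows essentially the same route as the paper: multiplicativity of $\TR(e^{-tD^2})$ on the tensor product gives $\TR(e^{-tD^2})\asym f_1(-\log t)f_2(-\log t)\,t^{\alpha_1+\alpha_2}$, and then Lemma~\ref{lem-heat-zeta} applied with $f=f_1f_2$, $\alpha=\alpha_1+\alpha_2$ yields the abscissa $s_0=-2(\alpha_1+\alpha_2)=s_{0,1}+s_{0,2}$ and the residue formula. Your explicit check that asymptotic equivalence passes to products is a detail the paper leaves implicit (note only that the bound $|A_i|=O(|B_i|)$ already follows from $A_i\asym B_i$ itself, not from the boundedness of $f_i$), and it does not change the argument.
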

\begin{proof} Due to the multiplicativity of
  the trace of the heat kernel we have $$ \TR(e^{-tD^2})
  \stackrel{t\to 0}{\sim} f_1(-\log t)f_2(-\log t)
  t^{\alpha_1+\alpha_2}$$ 
and hence the result follows from Lemma~\ref{lem-heat-zeta}.
\end{proof}
\begin{lemma}
\label{lem-prodstate} Assume the conditions of Lemma~\ref{lem-prod}.
Let $A_1\in\Bb(\HS_1)$ and $A_2\in\Bb(\HS_2)$ be weakly regular with
functions $f_{A_1}$ and $f_{A_2}$.
Then 
$$\Tt(A_1\otimes A_2) = \lim_{s\to
  0^+}\frac{\Ll[f_{A_1}f_{A_2}](s)}{\Ll[f_1f_2](s)}.$$
\end{lemma}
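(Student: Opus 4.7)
The plan is to reduce the statement to Lemma~\ref{lem-specstate} applied to the tensor product spectral triple. The key structural fact is that, since $D^2=D_1^2\otimes \id+\id\otimes D_2^2$, the heat semigroup factorises, $e^{-tD^2}=e^{-tD_1^2}\otimes e^{-tD_2^2}$, and consequently
\[
\TR\bigl(e^{-tD^2}(A_1\otimes A_2)\bigr) = \TR_{\HS_1}\bigl(e^{-tD_1^2}A_1\bigr)\,\TR_{\HS_2}\bigl(e^{-tD_2^2}A_2\bigr).
\]
The weak regularity hypothesis on $A_i$ gives $\TR_{\HS_i}(e^{-tD_i^2}A_i)\asym f_{A_i}(-\log t)\,t^{\alpha_i}$ for $i=1,2$, so first I would multiply these two asymptotic equivalences to obtain
\[
\TR\bigl(e^{-tD^2}(A_1\otimes A_2)\bigr) \asym f_{A_1}(-\log t)\,f_{A_2}(-\log t)\,t^{\alpha_1+\alpha_2}.
\]

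Next I would verify that this identifies $A_1\otimes A_2$ as a weakly regular operator for the tensor product triple in the sense of equation~\eqref{eq-wreg}: the product function $f_{A_1}f_{A_2}$ is bounded and locally integrable (as a product of bounded, locally integrable functions), and the exponent $\alpha_1+\alpha_2$ coincides with the exponent $\alpha$ appearing in the heat-kernel asymptotics of the tensor product, by Lemma~\ref{lem-prod}. Here one invokes the hypothesis that $\lim_{s\to 0}s\Ll[f_1f_2](s)$ exists and is non-zero to ensure that the denominator in the final formula is well-defined; the analogous existence for $\Ll[f_{A_1}f_{A_2}]$ enters only through the ratio, which is all that Lemma~\ref{lem-specstate} really requires in its proof.

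Once weak regularity of $A_1\otimes A_2$ is established, I would apply Lemma~\ref{lem-specstate} directly: the spectral state of the tensor product evaluated on $A_1\otimes A_2$ equals
\[
\Tt(A_1\otimes A_2)=\lim_{s\to 0^+}\frac{\Ll[f_{A_1\otimes A_2}](s)}{\Ll[f](s)}
=\lim_{s\to 0^+}\frac{\Ll[f_{A_1}f_{A_2}](s)}{\Ll[f_1f_2](s)},
\]
which is the desired formula.

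The main technical obstacle I expect is the first step: multiplying two asymptotic equivalences is not always harmless. One has to argue that writing $\TR_{\HS_i}(e^{-tD_i^2}A_i)=f_{A_i}(-\log t)t^{\alpha_i}+r_i(t)$ with $r_i(t)=o(f_{A_i}(-\log t)t^{\alpha_i})$ implies
\[
\TR\bigl(e^{-tD^2}(A_1\otimes A_2)\bigr)-f_{A_1}(-\log t)f_{A_2}(-\log t)t^{\alpha_1+\alpha_2}=o\bigl(f_{A_1}(-\log t)f_{A_2}(-\log t)t^{\alpha_1+\alpha_2}\bigr),
\]
and this requires using boundedness of the $f_{A_i}$ together with the fact that $f_{A_i}(-\log t)t^{\alpha_i}\to\infty$ as $t\to 0^+$ (so that the cross terms $f_{A_1}(-\log t)t^{\alpha_1}\cdot r_2(t)$ and $r_1(t)\cdot f_{A_2}(-\log t)t^{\alpha_2}$ are negligible compared to the main term). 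Granted this, the rest is a routine reduction to Lemma~\ref{lem-specstate}.
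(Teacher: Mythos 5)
Your proposal is correct and follows essentially the same route as the paper: the paper likewise uses multiplicativity of the heat trace to write, on $(0,\delta)$, $\TR(e^{-tD^2}A_1\otimes A_2)=\bigl(f_{A_1}(-\log t)f_{A_2}(-\log t)+O(\epsilon)\bigr)t^{\alpha_1+\alpha_2}$ and then repeats the Mellin-integral estimate from the proof of Lemma~\ref{lem-specstate} with $f_{A_1}f_{A_2}$ and $f_1f_2$ in place of $f_A$ and $f$. Your anticipated obstacle is handled more simply than you suggest: since $|r_i(t)|\leq \epsilon\,|f_{A_i}(-\log t)|\,t^{\alpha_i}\leq \epsilon M_i\,t^{\alpha_i}$ by weak regularity and boundedness, the cross terms are already $O(\epsilon)\,t^{\alpha_1+\alpha_2}$, which is all the integral estimate needs, so no divergence of $f_{A_i}(-\log t)t^{\alpha_i}$ is required (nor is it guaranteed, as $f_{A_i}$ may vanish at some points).
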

\begin{proof}
Let $\epsilon>0$ and choose $\delta$ such that $|\TR_{\HS_i}(e^{-tD_i^2}A_i) - f_{A_i}(-\log t)t^{\alpha_i} |\leq \epsilon t^{\alpha_i}$ provided $0<t<\delta$. Then
\begin{eqnarray*} 
\int_0^\delta \TR_\HS(e^{-tD^2}A_1\otimes A_2)t^{s-1}dt &=& 
 \int_0^\delta \TR_{\HS_1}(e^{-tD^2_1}A_1)\TR_{\HS_2}(e^{-tD^2_2} A_2) t^{s-1}dt \\
& =& \int_0^\delta\big(f_{A_1}(-\log t)f_{A_2}(-\log t) +
O(\epsilon)\big) t^{\alpha_1+\alpha_2+s-1}dt \\
& =& \Ll[f_{A_1}f_{A_2}](\alpha_1+\alpha_2+s) + 
O(\epsilon)\frac{\delta^{\alpha_1+\alpha_2+s}}{\alpha_1+\alpha_2+s} \end{eqnarray*}
from which the result follows by arguments similar to the above.
\end{proof}
\begin{coro}
\label{cor-prodstrongreg}
Let $A_1$ and $A_2$ be weakly regular operators.
\begin{enumerate}[(i)]
 \item If $A_1$ is strongly regular then $\Tt(A_1\otimes A_2) = \Tt_1(A_1)\Tt(\id \otimes A_2) $.

 \item If both $A_1$ and $A_2$ are strongly regular then $\Tt(A_1\otimes A_2) = \Tt_1(A_1)\Tt_2(A_2) $.
\end{enumerate}
\end{coro}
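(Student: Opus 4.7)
The plan is to reduce both claims to Lemma~\ref{lem-prodstate} by invoking the explicit form $f_A = \Tt(A) f$ that Corollary~\ref{cor-streg} guarantees for strongly regular operators, and then exploit the linearity of the Laplace transform to cancel factors in the ratio $\Ll[f_{A_1}f_{A_2}](s)/\Ll[f_1f_2](s)$.

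First, for (i), I would start from the formula
\[
\Tt(A_1\otimes A_2) = \lim_{s\to 0^+}\frac{\Ll[f_{A_1}f_{A_2}](s)}{\Ll[f_1f_2](s)}
\]
provided by Lemma~\ref{lem-prodstate}. Since $A_1$ is strongly regular, Corollary~\ref{cor-streg} yields $f_{A_1} = \Tt_1(A_1)\,f_1$, so that $f_{A_1}f_{A_2} = \Tt_1(A_1)\,f_1 f_{A_2}$. Pulling the scalar out of the Laplace transform gives
\[
\Tt(A_1\otimes A_2) = \Tt_1(A_1)\,\lim_{s\to 0^+}\frac{\Ll[f_1 f_{A_2}](s)}{\Ll[f_1 f_2](s)}.
\]
It remains to identify the remaining limit with $\Tt(\id\otimes A_2)$. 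For this I would apply Lemma~\ref{lem-prodstate} to the operator $\id\otimes A_2$: the identity is trivially strongly regular (Corollary~\ref{cor-cesar} applied to $A=\id$ gives $\bar\id_n = 1$, hence $f_{\id} = f_1$), and so the Lemma delivers exactly the ratio above as the value of $\Tt(\id\otimes A_2)$.

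For (ii), I would simply apply (i) and then replay the same argument on the second factor: since $A_2$ is strongly regular, Corollary~\ref{cor-streg} gives $f_{A_2} = \Tt_2(A_2) f_2$, whence $f_1 f_{A_2} = \Tt_2(A_2) f_1 f_2$ and
\[
\Tt(\id\otimes A_2) = \Tt_2(A_2)\lim_{s\to 0^+}\frac{\Ll[f_1 f_2](s)}{\Ll[f_1 f_2](s)} = \Tt_2(A_2),
\]
since the assumptions of Lemma~\ref{lem-prod} ensure the denominator is nonzero near $0$. Combining with (i) gives $\Tt(A_1\otimes A_2) = \Tt_1(A_1)\Tt_2(A_2)$.

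There is no real obstacle here; the proof is essentially bookkeeping. The only point to be careful about is verifying that $\id$ qualifies as a (strongly) regular operator in the sense needed to invoke Lemma~\ref{lem-prodstate} with $A=\id\otimes A_2$, i.e.\ checking that $f_{\id\otimes A_2} = f_1 f_{A_2}$ fits the weak-regularity framework (bounded, locally integrable, with $\lim_{s\to 0}s\Ll[f_1 f_{A_2}](s)$ existing). This follows from the hypotheses of Lemma~\ref{lem-prod} together with the weak regularity of $A_2$, and should be checked explicitly before concluding.
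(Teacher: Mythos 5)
Your proof is correct and matches the paper's intended argument: the paper states this corollary without a separate proof precisely because it follows from Lemma~\ref{lem-prodstate} combined with Corollary~\ref{cor-streg} ($f_{A_1}=\Tt_1(A_1)f_1$, resp.\ $f_{A_2}=\Tt_2(A_2)f_2$), linearity of the Laplace transform, and the trivial strong regularity of $\id$ (with $f_{\id}=f_1$), exactly as you argue.
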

\begin{rem}
The result of the corollary says that the spectral state factorizes for tensor products of strongly regular operators.
This corresponds to the formula on page~563 in \cite{Co94}.
It should be noticed, however, that this factorisation is in general not valid for tensor products of weakly regular operators, since the Laplace transform of a product is not the product of the Laplace transforms. We consider below examples of this type.
\end{rem}

\begin{lemma}
\label{lem-prodform}
Let $a_i,b_i\in\Aa_i$, and set \( \{d;a,b\} = [d,\pi(a)]^*[d,\pi(b)]\).
\begin{enumerate}[(i)]
\item Then one has
\[
Q\bigl( a_1\otimes a_2, b_1\otimes b_2 \bigr) = 
\Tt \bigl(  \{ D_1; a_1,b_1 \} \otimes \pi(a_2^\ast b_2) \bigr)
+ \Tt\bigl(  \pi(a_1^\ast b_1) \otimes \{ D_2; a_2,b_2 \} \bigr) \,.
\]
\item If \(\pi(a_i^\ast b_i)\), $i=1,2$, are strongly regular then
\[
Q\bigl( a_1\otimes a_2, b_1\otimes b_2 \bigr) = 
\Tt \bigl(  \{ D_1; a_1,b_1 \} \otimes \id  \bigr) \Tt_2 \bigl( \pi(a_2^\ast b_2) \bigr)
+\Tt_1\bigl( \pi(a_1^\ast b_1)\bigr) \Tt\bigl(  \id \otimes \{ D_2; a_2,b_2 \} \bigr) \,,
\]
\end{enumerate}
\end{lemma}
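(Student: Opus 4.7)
My plan is to expand the Dirac commutator using $D = D_1 \otimes \id + \chi \otimes D_2$ together with the fact that $\chi$ commutes with $\pi_1(\Aa_1)$, yielding
\[
[D,\pi(a_1 \otimes a_2)] = [D_1,\pi_1(a_1)] \otimes \pi_2(a_2) + \pi_1(a_1)\chi \otimes [D_2,\pi_2(a_2)],
\]
and likewise for $b_1 \otimes b_2$. Multiplying $[D,\pi(a_1\otimes a_2)]^\ast [D,\pi(b_1\otimes b_2)]$ produces four terms: two ``diagonal'' ones in which the two factors of $\chi$ multiply to $\id$ and recombine as $\{D_1;a_1,b_1\} \otimes \pi_2(a_2^\ast b_2)$ and $\pi_1(a_1^\ast b_1) \otimes \{D_2;a_2,b_2\}$, and two ``cross'' terms each carrying a single stray $\chi$ in the first tensor slot.

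The key step is then to verify that the two cross terms have vanishing trace against $e^{-tD^2} = e^{-tD_1^2} \otimes e^{-tD_2^2}$ for every $t>0$. Since $\chi$ anticommutes with $D_1$ it commutes with $e^{-tD_1^2}$, and since $\chi$ commutes with $\pi_1(\Aa_1)$ one checks that $[D_1,\pi_1(a)]$ anticommutes with $\chi$ for every $a \in \Aa_1$. Combining these observations with $\chi^2 = \id$ and cyclicity of $\TR_{\HS_1}$, each cross-term trace is shown to equal its own negative and therefore to vanish. This gives
\[
\TR\bigl(e^{-tD^2}\{D; a_1 \otimes a_2, b_1 \otimes b_2\}\bigr) = \TR\bigl(e^{-tD^2}(\{D_1;a_1,b_1\} \otimes \pi_2(a_2^\ast b_2))\bigr) + \TR\bigl(e^{-tD^2}(\pi_1(a_1^\ast b_1) \otimes \{D_2;a_2,b_2\})\bigr),
\]
and feeding both sides into the Mellin formula of Lemma~\ref{lem-specstate}, together with linearity of $\Tt$, yields (i).

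For (ii) I would appeal to Corollary~\ref{cor-prodstrongreg}, after noting that the formula of Lemma~\ref{lem-prodstate} is symmetric in the two tensor factors, so the same proof also delivers $\Tt(B_1 \otimes B_2) = \Tt(B_1 \otimes \id)\,\Tt_2(B_2)$ whenever $B_2$ is strongly regular. Applied to the first summand in (i) with $B_2 = \pi_2(a_2^\ast b_2)$ (strongly regular by assumption), and combined with Corollary~\ref{cor-prodstrongreg}(i) applied to the second summand with $B_1 = \pi_1(a_1^\ast b_1)$ (strongly regular by assumption), this delivers precisely the announced formula. The main obstacle is the grading bookkeeping in the first step: making sure that the anticommutation of $[D_1,\pi_1(\cdot)]$ with $\chi$, together with cyclicity and self-adjointness of $\chi$, really does kill both cross terms. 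Once that point is secured, (i) is immediate from linearity of $\Tt$, and (ii) reduces to a direct application of the regularity results already established in Section~\ref{ssec-tensorprod}.
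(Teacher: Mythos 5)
Your proof is correct and follows essentially the same route as the paper: expand the commutator into four terms, kill the two cross terms by the grading parity argument under the (heat-kernel-weighted) trace, and deduce (ii) from Corollary~\ref{cor-prodstrongreg}. Your explicit observation that one needs the mirror-image of Corollary~\ref{cor-prodstrongreg}(i) (strong regularity in the \emph{second} factor), obtained from the symmetry of Lemma~\ref{lem-prodstate}, is a detail the paper leaves implicit, but the argument is the same.
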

\begin{proof}
Since $[D,\pi(a_1\otimes a_2)] = [D_1,\pi(a_1)]\otimes \pi(a_2) + \chi \pi(a_1)\otimes [D_2,\pi(a_2)]$ we get 
\begin{eqnarray*}
[D,\pi(a_1\otimes a_2)]^*[D,\pi(b_1\otimes b_2)]&=& [D_1,\pi(a_1)]^*[D_1,\pi(b_1)]\otimes \pi(a_2^*b_2) \\
&&+\pi(a_1^*b_1) \otimes [D_2,\pi(a_2)]^*[D_2,\pi(b_2)]\\
&&+[D_1,\pi(a_1)]^*\chi\pi(b_1) \otimes \pi(a_2)^*[D_2,\pi(b_2)]\\
&&+\chi\pi(a_1^*)[D_1,\pi(b_1)] \otimes [D_2,\pi(a_2)]^*\pi(b_2)
\end{eqnarray*}
Since $\TR(\chi A) = 0$ for any odd operator $A\in \Bb(\HS_1)$ the contributions of the last two lines vanish under the trace, hence under $\Tt$, and we get the first statement.
Point (ii) follows from Corollary~\ref{cor-prodstrongreg}.
P
\end{proof}

\section{The spectral triple associated with a stationary Bratteli diagram}
\label{sec-ST-Bratteli} 

An oriented graph \(\Gg=(\Vv, \Ee)\) is the data of a set of vertices $\Vv$ and a set of edges $\Ee$ with two maps,
\(\xymatrix{\Ee \ar@<.3ex>[r]^{r} \ar@<-.3ex>[r]_{s}  & \Vv}\), one assigning to an edge $\varepsilon$ its source vertex $s(\varepsilon)$ and the second assigning its range $r(\varepsilon)$.
The {\em graph matrix} of $\Gg$ is the matrix $A$ with coefficients $A_{vw}$ equal to the number of edges which have source $v$ and range $w$.

We construct a spectral triple from the following data (see Figure~\ref{fig-graph} for an illustration of the construction):
\begin{enumerate}
\item A finite oriented graph $\Gg=(\Vv,\Ee)$ with a distinguished one-edge-loop $\lp$.
We suppose that the graph is strongly connected :  for any two vertices $v_1,v_2$ there exists an oriented path from $v_1$ to $v_2$ and an oriented path from $v_2$ to $v_1$. This is equivalent to saying that the graph matrix $A$ is irreducible.
We will further assume that $A$ is {\em primitive} (see below). 

Alternatively we can pick a distinguished loop made of $p>1$ edges, and resume the case described above by replacing the matrix $A$ by $A^p$ and considering its associated graph $\Gg^p$ instead of $\Gg$.

\item A function $\hat\tau:\Ee\to\Ee$ satisfying: for all $\varepsilon\in \Ee$
\begin{enumerate}
\item if $r(\varepsilon)$ is the vertex of $\lp$ then $\hat\tau(\varepsilon)=\lp$,
\item if $r(\varepsilon)$ is not the vertex of  $\lp$ then $\hat\tau(\varepsilon)$ is an edge starting at $r(\varepsilon)$ and such that $r(\hat\tau(\varepsilon))$
is closer\footnote{for the combinatorial graph metric, where non-loop edges have length $1$, and loop edges length $0$.} to the vertex of $\lp$ in $\Gg$. 
\end{enumerate}
\item A symmetric subset $\hat \HE=\Hh(\Gg)$ of $\Ee\times\Ee$:
\[
\hat\HE \subseteq \left\{ (\varepsilon,\varepsilon') \in \Ee\times\Ee \, : \, \varepsilon\neq \varepsilon', \; s(\varepsilon)=s(\varepsilon') \right\}
\]
This can be understood as a graph with vertices $\Ee$ and edges $\hat\HE$, which has no loops. 
We fix an orientation of the edges in $\hat\HE$, and write \(\hat\HE = \hat\HE^{+} \cup \hat\HE^{-} \) for the decomposition into positively and negatively oriented edges.

\item A real number $\rho \in (0,1)$.
\end{enumerate}

{\em Notation.} We still denote the range and source maps by $r,s$ on $\hat\Hh$: \(\xymatrix{\hat\Hh \ar@<.3ex>[r]^{r} \ar@<-.3ex>[r]_{s}  & \Ee}\).
We allow compositions with the source and range maps from $\Ee$ to $\Vv$, which we denote by \(s^2, r^2, sr, rs\): \(\xymatrix{\hat\Hh \ar@<.6ex>[r]^{r^2,s^2} \ar@<.3ex>[r] \ar[r] \ar@<-.3ex>[r]_{rs,sr}  & \Vv}\).
See Figure~\ref{fig-graph} for an illustration with the Fibonacci matrix $A=\left(\begin{array}{cc} 1 & 1 \\ 1 & 0 \end{array}\right)$.
\begin{figure}[htp]
\begin{center}
\psfrag{a}{$s(\varepsilon)=sr(h)$}
\psfrag{b}{$r(\varepsilon)=r^2(h)$}
\psfrag{e}{$\hat{\tau}(\varepsilon)$}
\psfrag{f}{\qquad $\varepsilon=r(h)$}
\psfrag{g}{$\lp=\hat{\tau}\circ \hat{\tau}(\varepsilon)$}
\psfrag{h}{$h\in \hat\Hh^{+}$}
\includegraphics[scale=0.4]{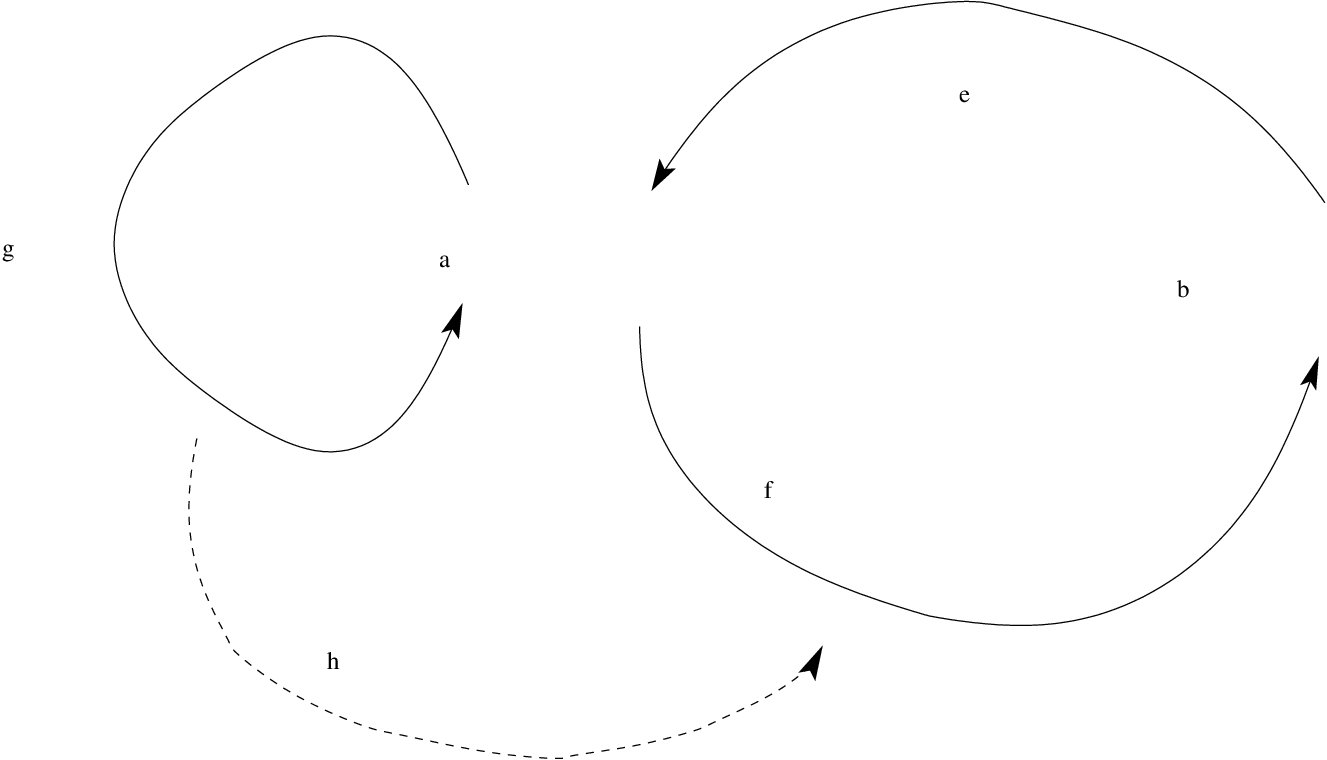}
\caption{{\small The graph $\Gg$ associated with the Fibonacci matrix and horizontal edges $\hat\Hh$.}} 
\label{fig-graph}
\end{center}
\end{figure}
\begin{figure}[htp]
\begin{center}
\[
\xymatrix{
&  \bullet \ar@{->}[ddrr] && \bullet \ar@{->}[ddrr] && \bullet \ar@{.}[dr] &\\
\circ \ar@{->}[ur] \ar@{->}[dr] && && && && \\
& \bullet \ar@{->}[uurr] \ar@{->}[rr] && \bullet \ar@{->}[uurr] \ar@{->}[rr]&& \bullet \ar@{.}[ur] \ar@{.}[r]& 
}
\]
\caption{{\small The stationnary Bratteli diagram associated with the Fibonacci matrix
.}} 
\label{fig-brat}
\end{center}
\end{figure}
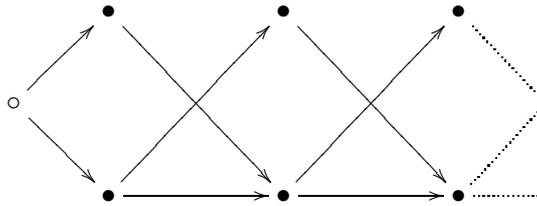

We denote by $\Pi_n(\Gg)$, or simply by $\Pi_n$ if the graph is understood, the set of paths of length $n$ over $\Gg$: sequences of $n$ edges $\varepsilon_1\cdots \varepsilon_n$ such that $r(\varepsilon_i)=s(\varepsilon_{i+1})$.
We also set $\Pi_0(\Gg)=\Vv$.
We extend the range and source maps to paths: if $\gamma=\varepsilon_1\cdots \varepsilon_n$ then $r(\gamma):=r(\varepsilon_n), s(\gamma):=s(\varepsilon_1)$.
Also, given $\gamma= \varepsilon_1 \cdots \varepsilon_i \cdots \varepsilon_n$, we denote by $\gamma_i=\varepsilon_i$ the $i$-th edge along the path.

The number of paths of length $n$ starting from $v$ and ending in $w$ is then ${A^n}_{vw}$.
We require that $A$ is {\em primitive}: \(\exists N \in \NM, \, \forall v,w, \, A^N_{vw}>0\).
(For the graph $\Gg$, this means that for any two vertices $v,w$, there is at least one oriented paths of length $N$ from $v$ to $w$; for the graph $\Gg^N$ this means that for any two vertice $v,w$, there is at least one oriented edge from $v$ to $w$.)
Under this assumption, $A$ has a non-degenerate positive eigenvalue $\pf$ which is strictly larger than the modulus of any other eigenvalue.
This is the Perron-Frobenius eigenvalue of $A$. 
Let us denote by $L$ and $R$ the left and right Perron-Frobenius eigenvectors of $A$ ({\it i.e.}\ associated with $\pf$), normalized so that 
\begin{equation}
\label{eq-normalization}
\sum_{j}R_j = 1,\quad \sum_{j} R_j L_j = 1\, .
\end{equation}
Let us also write the minimal polynomial of $A$ as \(\mu_A(\lambda)=\Pi_{k=1}^p (\lambda-\lambda_k)^{m_k}\) with $\lambda_1=\pf, m_1=1$.
Then from the Jordan decomposition of $A$ one can compute the asymptotics of the powers of $A$ as follows  \cite{HJ94}:
\begin{equation}
\label{eq-An}
A^n_{ij}= R_i L_j \; \pf^n + \sum_{k=2}^p P^{(ij)}_k(n) \;  \lambda_k^n \,,
\end{equation}
where $P^{(ij)}_k$ is a polynomial of degree $m_k$ if $n\ge m_k$, and of degree less than $m_k$ if $n<m_k$. 

Let $M_j$ be the algebraic multiplicity of the $j$-th eigenvalue of $A$ (hence $M_1=1$).
Let $R^{j,l}$, for $1\le j \le p$ and $1\le l \le M_j$, be a basis of (right) eigenvectors of $A$: \(AR^{j,l} = \lambda_j R^{j,l}\).
Let also $L^{j,l}$ be a basis of left eigenvectors of $A$ normalized so that \(R^{j,l} \cdot L^{j',l'}=\delta_{jj'} \delta_{ll'}\).
So $R^{1,1}=R$ and $L^{1,1}=L$ as defined in equation~\eqref{eq-An}.
For $1\le j \le p$, let us define
\begin{equation}
\label{eq-CH}
C^j_{\hat\Hh} = \frac{1}{\lambda_j} \sum_{l=1}^{M_j} 
\sum_{\substack{v\in\Vv \\ h\in\hat\Hh}} R^{j,l}_v L^{j,l}_{s^2(h)}\,.
\end{equation}

Given $\Gg$ we consider the topological space $\Pi_\infty$ of all (one-sided) infinite paths over $\Gg$ with the standard topology. It is compact and metrisable. The set $\Pi_{\infty \ast}$ of infinite paths which eventually become $\lp$ forms a dense set. 

\begin{rem}
\label{rem-brat1}{\em
This construction is equivalent to that of a {\em stationary Bratteli diagram} \cite{Bra72}: this is an infinite directed graph, with a copy of the vertices $\Vv$ at each level $n\ge 0$, and a copy of the edges $\Ee$ linking the vertices at level $n$ to those at level $n+1$ for all $n$ (there is also a root, and edges linking it down to the vertices at level $0$).
So for instance, the set $\Pi_n$ of paths of length $n$, is viewed here as the set of paths from level $0$ down to level $n$ in the diagram.
See Figure~\ref{fig-brat} for an illustration (the root is represented by the hollow circle to the left).
}
\end{rem}

Given $\hat\tau$ we obtain an embedding of $\Pi_n$ into $\Pi_{n+1}$ by $\varepsilon_1\cdots \varepsilon_n\mapsto \varepsilon_1\cdots \varepsilon_n\hat\tau(\varepsilon_n)$ and hence, by iteration, into $\Pi_{\infty \ast}\subset \Pi_\infty$. 
We denote the corresponding inclusion $\Pi_n\hookrightarrow \Pi_\infty$ by $\tau$.

Given $\hat\HE$ we define horizontal edges $\Hh_n$ between paths of $\Pi_n$, 
namely  $(\gamma,\gamma')\in \Hh_n$ if $\gamma$ and $\gamma'$ differ only on their last edges $\varepsilon$ and $\varepsilon'$, and with $(\varepsilon,\varepsilon')\in \hat\HE$.
For all $n$, we carry the orientation of $\hat\HE$ over to $\Hh_n$.

The {\em approximation graph} \(G_\tau=(V,E)\) is given by:
\begin{eqnarray*}
V = \bigcup_{n\ge 0} V_n\,, & V_n = \tau (\Pi_n) \subset \Pi_{\infty\ast}\,,\\
E = \bigcup_{n\ge 1} E_n\,, & E_n = \tau\times\tau(\Hh_n)\,,
\end{eqnarray*}
together with the orientation inherited from $\Hh_n$: so we write \(E_n = E_n^+ \cup E_n^-\) for all $n$, and \(E=E^+\cup E^-\).
Given $h\in\hat\Hh$ we denote by $\Hh_n(h),E_n(h),$ and $E(h)$ the corresponding sets of edges of type $h$.

\begin{lemma}
\label{lem-Gtau}
The approximation graph $G_\tau=(V,E)$ is connected if and only if for all $\varepsilon,\varepsilon'\in\Ee$ with $s(\varepsilon)=s(\varepsilon')$ there is a path in $\hat\Hh$ linking $\varepsilon$ to $\varepsilon'$.
Its set of vertices $V$ is dense in $\Pi_\infty$.
\end{lemma}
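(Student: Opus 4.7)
The lemma contains two claims which I will handle separately: density of $V$, and the characterization of connectedness.

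\emph{Density.} This is essentially built into the construction. For any $\omega=\omega_1\omega_2\cdots\in\Pi_\infty$, the truncation $\omega_1\cdots\omega_n\in\Pi_n$ has $\tau$-image in $V_n\subset V$ agreeing with $\omega$ on its first $n$ edges. Since cylinders of this form form a neighbourhood basis of $\omega$ in the Cantor topology, this shows $V$ is dense.

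\emph{Backward direction of the connectedness equivalence.} Assuming the $\Hh$-path condition, I will prove by induction on $n\ge 1$ that any two $\tau(\gamma_1),\tau(\gamma_2)$ with $\gamma_1,\gamma_2\in\Pi_n$ sharing source lie in the same connected component of $G_\tau$. The base case $n=1$ simply lifts an $\Hh$-path $\gamma_1=\eta_0,\eta_1,\ldots,\eta_k=\gamma_2$ to a chain of $E_1$-edges via $H_1\subset \HE$ and $\tau\times\tau$. For the inductive step, write $\gamma_i=\varepsilon_{i,1}\cdots\varepsilon_{i,n}$. Condition (2) on $\hat\tau$ ensures that $\hat\tau(\varepsilon_{i,n-1})$ and $\varepsilon_{i,n}$ share source $r(\varepsilon_{i,n-1})$, so the hypothesis provides an $\Hh$-path between them. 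Prepending the common prefix $\varepsilon_{i,1}\cdots\varepsilon_{i,n-1}$ turns this into a chain of $H_n$-edges, which pushes forward to an $E_n$-chain in $G_\tau$ joining $\tau(\gamma_i)$ to $\tau(\varepsilon_{i,1}\cdots\varepsilon_{i,n-1}\hat\tau(\varepsilon_{i,n-1}))$. By the very definition of the embedding $\Pi_{n-1}\hookrightarrow\Pi_n$, this last point equals $\tau(\varepsilon_{i,1}\cdots\varepsilon_{i,n-1})\in V_{n-1}$, reducing the claim for length $n$ to the claim for length $n-1$.

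\emph{Forward direction.} The main device is the first-edge map $\iota\colon V\to\Ee$, sending an element $u\in V$ to its first edge in $\Pi_\infty$. This is intrinsic to $u$: if $u=\tau(\gamma)$ with $\gamma\in\Pi_n$, then $\iota(u)=\gamma_1$, independent of the representative. Its behaviour along $G_\tau$-edges is rigid. An edge in $E_n$ with $n\ge 2$ preserves $\iota$, because the two length-$n$ representatives agree on their $(n-1)$-prefix. An edge in $E_1$ takes the form $(\tau(\varepsilon),\tau(\varepsilon'))$ with $(\varepsilon,\varepsilon')\in\HE$, so $\iota$ jumps along a single $\HE$-edge. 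Assuming $G_\tau$ is connected and given $\varepsilon,\varepsilon'\in\Ee$ with $s(\varepsilon)=s(\varepsilon')$, a path in $G_\tau$ from $\tau(\varepsilon)$ to $\tau(\varepsilon')$ produces, after applying $\iota$ and collapsing stationary stretches, a walk in $(\Ee,\HE)$ from $\varepsilon$ to $\varepsilon'$, i.e.\ the required $\Hh$-path.

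\emph{Main obstacle.} The delicate point is the inductive step: keeping track of source compatibility (so that the $\Hh$-path assumption can be invoked between $\varepsilon_{i,n}$ and $\hat\tau(\varepsilon_{i,n-1})$, which is guaranteed by condition (2) on $\hat\tau$), and recognising that passing to the canonical continuation is precisely the mechanism by which the $\tau$-image of a length-$n$ path coincides with the $\tau$-image of its length-$(n-1)$ prefix. Once this identification and the rigidity of $\iota$ under $E_n$ for $n\ge 2$ are clearly recorded, both implications become purely formal.
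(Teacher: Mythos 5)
Your proposal is correct and takes essentially the same approach as the paper: density via truncations, necessity by projecting a path in $G_\tau$ onto an edge coordinate to extract a walk in $\Hh$, and sufficiency by lifting $\Hh$-paths level by level using the identification $\tau(\gamma)=\tau(\gamma\hat\tau(\gamma_n))$, which is exactly the inductive argument the paper's two-line proof leaves implicit. Your explicit restriction to pairs of paths with common source is likewise implicit in the paper's proof (its choice of the first level of disagreement), so there is no discrepancy there.
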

\begin{proof}
Let $x,y\in V$, $x\neq y$, and $n$ the largest integer so that $x_i=y_i, i=1, \cdots n-1$, and $x_n\neq y_n$ (so $s(x_n)=s(y_n)$).
Any path in $G_\tau$ linking $x$ to $y$ must contain a subpath linking $x_n$ to $y_n$.
Hence $G_\tau$ is connected iff the above given condition on $\hat\Hh$ is satisfied.

The density of $V$ is clear since any base clopen set for the topology of $\Pi_\infty$: $[\gamma]=\{x\in \Pi_\infty\, :\, x_i=\eta_i, \, i\le n\}$, $\gamma\in\Pi_n$, $n\in\NM$,  contains a point of $V$, namely $\tau(\gamma)$.
\end{proof}

Given an edge $e\in E$ we write $e^\op$ for the edge with the opposite orientation: \(s(e^\op)=r(e), r(e^\op)=s(e)\).
Now our earlier construction \cite{KS10} yields a spectral triple from the data of the approximation graph $G_\tau$.
The $C^\ast$-algebra is $C(\Pi_\infty)$, and it is represented on the Hilbert space $\ell^2(E)$ by:
\begin{equation}
\label{eq-rep}
\pi(f)\psi(e) = f(s(e))\psi(e)\,.
\end{equation}
The Dirac operator is given by:
\begin{equation}
\label{eq-Dirac}
 D \phi(e) = \rho^{-n} \phi(e^\op) \,, \qquad e \in E_n\,.
\end{equation}
The orientation yields a decomposition of $\ell^2(E)$ into \(\ell^2(E^+) \oplus \ell^2(E^-)\). 
\begin{theo}
\label{thm-ST}
$(C(\Pi_{\infty}(\Gg)),\ell^2(E),D)$ is an even spectral triple with $\ZM/2$-grading \(\chi\) which flips the orientation.
Its representation is faithful.
If $\hat\HE$ is sufficiently large ({\it i.e.} satisfies the condition in Lemma~\ref{lem-Gtau})  then its spectral distance $d_s$ is compatible with the topology on $\Pi_{\infty}(\Gg)$, and one has:
\begin{equation}
\label{eq-ds}
d_s(x,y)=c_{xy} \rho^{n_{xy}} + \sum_{n>n_{xy}} \bigl( b_n(x)+b_n(y) \bigr)\rho^n\,, \ \text{\rm for } x\neq y\,,
\end{equation}
where $n_{xy}$ is the largest integer such that $x_i=y_i$ for $i<n_{xy}$, and $b_n(z)=1$ if $\hat\tau(z_n) \neq z_{n+1}$ and $b_n(z)=0$ otherwise, for any $z\in\Pi_\infty$.
The coefficient $c_{xy}$ is the number of edges in a shortest path in $\hat\Hh$ linking $x_{n_{xy}}$ to $y_{n_{xy}}$.
If $\hat\Hh$ is maximal:  \(\hat\HE= \left\{ (\varepsilon,\varepsilon') \in \Ee\times\Ee \, : \, \varepsilon\neq \varepsilon', \; s(\varepsilon)=s(\varepsilon') \right\}\), then $c_{xy}=1$ for all $x,y\in\Pi_\infty$.
\end{theo}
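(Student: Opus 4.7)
The plan is to check the spectral triple axioms explicitly, compute the commutator norm, and then apply Connes' distance formula along shortest paths in the approximating graph $G_\tau$, in the spirit of \cite{KS10}.

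First I verify the axioms. The representation $\pi$ is faithful because $\pi(f)$ acts by multiplication by $f\circ s$ on $\ell^2(E)$, the source set $s(E)\subset V$ is dense in $\Pi_\infty$ by Lemma~\ref{lem-Gtau}, and $f\in C(\Pi_\infty)$ is determined by its values on a dense set. The Dirac operator decomposes as the orthogonal sum over $n\ge 1$ of $\rho^{-n}$ times the orientation flip on $\ell^2(E_n)=\ell^2(E_n^+)\oplus\ell^2(E_n^-)$, so $D$ is self-adjoint with pure point spectrum $\{\pm\rho^{-n}\}_{n\ge 1}$; since $\rho<1$ these eigenvalues tend to $\pm\infty$, yielding a compact resolvent. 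The grading $\chi=\id_{\ell^2(E^+)}\oplus(-\id_{\ell^2(E^-)})$ commutes with $\pi$ (which preserves orientation) and anticommutes with $D$ (which flips it), so the triple is even.

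Next I compute the commutator. For $e\in E_n$, a short calculation gives
\[
[D,\pi(f)]\phi(e)=\rho^{-n}\bigl(f(r(e))-f(s(e))\bigr)\phi(e^\op),
\]
hence
\[
\|[D,\pi(f)]\|=\sup_{n\ge 1}\;\sup_{e\in E_n}\rho^{-n}\,|f(r(e))-f(s(e))|.
\]
Functions that are Lipschitz with respect to the ultrametric assigning diameter $\rho^n$ to every depth-$n$ cylinder in $\Pi_\infty$ have bounded commutator, and such Lipschitz functions form a dense $\ast$-subalgebra of $C(\Pi_\infty)$ by Stone--Weierstrass, supplying the required $\Aa_0$.

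For the distance I use Connes' formula $d_s(x,y)=\sup\{|f(x)-f(y)|:\|[D,\pi(f)]\|\le 1\}$. The condition $\|[D,\pi(f)]\|\le 1$ reads $|f(r(e))-f(s(e))|\le\rho^n$ for each $e\in E_n$, so by telescoping $|f(x)-f(y)|\le L(\gamma)$ for every path $\gamma$ in $G_\tau$ joining $x$ and $y$, where $L(\gamma)=\sum_{e\in\gamma\cap E_n}\rho^n$. Connectedness of $G_\tau$ (guaranteed by the hypothesis on $\Hh$ via Lemma~\ref{lem-Gtau}) ensures such a $\gamma$ exists for any pair in $V$; by density of $V$ in $\Pi_\infty$ the supremum extends to all of $\Pi_\infty$ by continuity. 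Conversely, the function $z\mapsto \inf_{\gamma}L(\gamma_{xz})$ is admissible and saturates the bound, so $d_s(x,y)$ equals the shortest-path length in $G_\tau$. Finally I identify the shortest path concretely: it consists of climbing from $x$ to a vertex of $V_{n_{xy}-1}$ (each broken $\hat\tau$-link at level $n>n_{xy}$ contributing a detour of weight $\rho^n$, precisely $b_n(x)\rho^n$), a horizontal traversal of $c_{xy}$ edges in $H_{n_{xy}}$ of total weight $c_{xy}\rho^{n_{xy}}$, and a symmetric descent to $y$. Compatibility with the Cantor topology follows from $\rho^{n_{xy}}\to 0$ as $x,y$ get close in $\Pi_\infty$, and maximality of $\Hh$ trivially forces $c_{xy}=1$.

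The main obstacle will be the lower bound in Connes' sup: producing, for each pair $x,y$, an admissible $f$ realising the prescribed path length, and verifying that the climb--cross--descend route is genuinely minimal. Both rely essentially on the connectedness condition on $\Hh$ and on checking that the function $z\mapsto d_{G_\tau}(x,z)$ to a fixed base point is itself $1$-Lipschitz with respect to the commutator norm; this is precisely the adaptation of the approximating-graph argument of \cite{KS10} to the vertical self-similar structure encoded by $\hat\tau$.
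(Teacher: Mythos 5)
Your proposal is correct and takes essentially the same route as the paper: the paper verifies the spectral triple axioms by the same direct observations (bounded commutators on a dense subalgebra, faithfulness from density of $V$), and then identifies $d_s$ with the continuous extension of the weighted path metric on the approximation graph $G_\tau$, delegating to Lemma~2.5 and Lemma~4.1/Corollary~4.2 of \cite{KS10} exactly the telescoping upper bound, the admissibility of the distance-to-a-point function, and the climb--cross--descend evaluation of shortest paths that you spell out inline. The only difference is one of presentation (you reprove the cited lemmas of \cite{KS10} rather than invoking them), not of method.
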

\begin{proof}
The first statements are clear.
In particular the commutator $[D,\pi(f)]$ is bounded if $f$ is a locally constant function.
The representation is faithfull by density of $V$ in $\Pi_\infty$.

If $\hat\HE$ satisfies the condition in Lemma~\ref{lem-Gtau}, then the graph $G(\tau)$ is connected.
It is also a metric graph, with lengths given by $\rho^n$ for all edges $e\in E_n$.
By a previous result (Lemma 2.5 in \cite{KS10}) $d_s$ is an extension to $\Pi_\infty$ of this graph metric, and as $\sum \rho^n < + \infty$, it is continuous and given by equation~\eqref{eq-ds} (by straightforward generalizations of Lemma 4.1 and Corollary 4.2 in \cite{KS10}).
\end{proof}

\subsection{Zeta function}\label{ssec-zeta}
We determine the zeta-function for the triple associated with the above data.
\begin{theo}
\label{thm-zeta}
Suppose that the graph matrix is diagonalizable with eigenvalues $\lambda_j$, $j=1, \ldots p$. 
The zeta-function $\zeta$ extends to a meromorphic function on $\CM$ which is invariant under the translation \(z \mapsto z+\frac{2\pi \imath}{\log \rho}\).
It is given by
\[
 \zeta(z) = \sum_{j=1}^p \frac{C_{\hat\HE}^j }{1-\lambda_j\rho^z} + h(z)
\]
where $h$ is an entire function, and $C_{\hat\HE}^j$ is given in equation~\eqref{eq-CH}.
In particular $\zeta$ has only simple poles which are located at \(\{\frac{\log\lambda_j+2\pi \imath k}{-\log \rho}: k\in \ZM, j=1,\ldots p\}\) with residui given by
\begin{equation}
\label{eq-s0res}
\res(\zeta,\frac{\log\lambda_j+ 2\pi \imath k}{-\log \rho})=\frac{ C_{\hat\HE}^j  \lambda_j}{-\log\rho} \,.
\end{equation}
In particular, the metric dimension is equal to $s_0 = \frac{\log\pf}{-\log \rho}$.
\end{theo}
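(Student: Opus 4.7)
The plan is to compute $\zeta(z)=\TR(|D|^{-z})$ directly as a Dirichlet series and then identify its coefficients in terms of the spectral data of $A$.

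First I would extract the spectrum of $|D|$. The defining formula $D\phi(e)=\rho^{-n}\phi(e^{\op})$ for $e\in E_n$ leaves each two-dimensional subspace of $\ell^2(E)$ spanned by $\{\delta_e,\delta_{e^{\op}}\}$ invariant and acts on it as $\rho^{-n}$ times the flip; the grading $\chi$ diagonalises the flip. Hence $|D|$ has eigenvalues $\rho^{-n}$, each occurring with multiplicity $|E_n|$, so on any right half-plane of convergence
\[\zeta(z)=\sum_{n\geq 1}|E_n|\,\rho^{nz}.\]

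Next I would count $|E_n|$ combinatorially. An oriented edge in $E_n$ is, via the inclusion $\tau\times\tau$, precisely the data of a horizontal edge $h\in\Hh$ together with a length-$(n-1)$ path in $\Gg$ ending at the common source $s^2(h)$, so
\[|E_n|=\sum_{h\in\Hh}\sum_{v\in\Vv}(A^{n-1})_{v,s^2(h)},\qquad n\geq 1,\]
with the $n=1$ case handled by $(A^{0})_{v,w}=\delta_{v,w}$. Substituting the diagonal decomposition $(A^{n-1})_{v,w}=\sum_{j,l}\lambda_j^{n-1}R^{j,l}_v L^{j,l}_w$ that the diagonalizability hypothesis affords, interchanging the finite sums over $j,l,v,h$, and recognising the definition (\ref{eq-CH}) of $C^j_\Hh$ reduces $|E_n|$ to a finite linear combination of the $\lambda_j^n$ with coefficients proportional to $C^j_\Hh$.

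Then I would sum the geometric series in $n$ for each $j$ independently. This produces a rational function of $\rho^z$ of the announced shape $\sum_j C^j_\Hh/(1-\lambda_j\rho^z)$ plus an entire (in fact constant) remainder $h(z)$. The resulting expression extends meromorphically to all of $\CM$, is visibly invariant under the translation $z\mapsto z+2\pi\imath/\log\rho$ inherent to $\rho^z$, and has at most simple poles exactly on the lattice $\{z:\lambda_j\rho^z=1\}$ announced in the statement. The residues at a pole $z_0$ with $\lambda_j\rho^{z_0}=1$ are read off the first-order expansion $1-\lambda_j\rho^z=-(z-z_0)\log\rho+O((z-z_0)^2)$. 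Finally, convergence of the original Dirichlet series requires $|\lambda_j|\rho^{\Re z}<1$ for every $j$, which by Perron--Frobenius is most restrictive at $\lambda_1=\pf$ and gives the metric dimension $s_0=\log\pf/(-\log\rho)$.

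The only genuinely delicate step I anticipate is the combinatorial bookkeeping for $|E_n|$: one must track the orientation conventions on $\Hh$ and the bijection $\tau\times\tau:H_n\to E_n$ carefully enough that the normalisation of $C^j_\Hh$ in (\ref{eq-CH}) comes out exactly and no $\lambda_j$-factors are dropped or doubled. Everything else is routine: linearity of the trace, closed-form summation of geometric series, and a first-order Laurent expansion to read off poles and residues. If $A$ fails to be diagonalizable, the same recipe applies with a Jordan basis; the polynomial-in-$n$ corrections to $(A^{n-1})_{v,w}$ then produce the higher-order poles alluded to in Remark~\ref{rem-zeta}.
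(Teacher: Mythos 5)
Your proposal is correct and follows essentially the same route as the paper's proof: write $\zeta(z)=\sum_{n\ge 1}\#E_n\,\rho^{nz}$, count $\#E_n=\sum_{h\in\Hh}\sum_{v}(A^{n-1})_{v,s^2(h)}$, use diagonalizability to get $\#E_n=\sum_j C^j_{\Hh}\lambda_j^n$, sum the geometric series, and locate the abscissa of convergence via Perron--Frobenius. One small remark: your first-order expansion gives residue $C^j_{\Hh}/(-\log\rho)$ at each pole, which is consistent with the displayed formula for $\zeta(z)$; the extra factor $\lambda_j$ in \eqref{eq-s0res} appears to be a typo in the paper rather than a defect of your argument.
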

\begin{proof}
Clearly
\[
\zeta(z) = \sum_{n\ge 1} \rho^{nz} \#E_n
\]
The cardinality of $E_n$ can be computed by summing, over all vertices $v$ at level $n-1$ and all edges $h\in \hat\HE$ with $s^2(h)=v$, the number of paths from level $0$ down to $v$ at level $n-1$:
\[
 \# E_n = \sum_{v\in \Vv} \sum_{h\in \hat\HE} A^{n-1}_{v s^2(h)} \,.
\]
Now since $A$ is diagonalizable, the polynomials $P_{k}^{ij}$ in equation~\eqref{eq-An} are all constant and can be expressed in terms of its (right and left) eigenvectors $R^{k,l},L^{k,l}, 1\le l\le M_k$.
(These vectors were so normalized that the $R^{k,l}$ are the columns the matrix of change of basis which diagonalizes $A$, and the vectors $L^{k,l}$ are the rows of its inverse.)  
So from equation~\eqref{eq-An} we get for all $n$,
\(
\# E_n = \sum_{k = 1}^p C^k_{\hat\Hh} \; \lambda_k^{n}\,,
\)
and hence
\[
 \zeta(z) = \sum_{k = 1}^p C^k_{\hat\Hh} \sum_{n\ge 1} \lambda_{k}^n \rho^{nz}
 = \sum_{k = 1}^p C^k_{\hat\Hh} \frac{\lambda_k\rho^z}{1-\lambda_k\rho^{z}}\,.
\]
Hence $\zeta$ has a simple pole at values $z$ for which $\rho^{z}\lambda_k = 1$, $k=1,\ldots p$.
The calculation of the residui is direct.
\end{proof}
The periodicity of the zeta function with purely imaginary period whose length is only determined by the factor $\rho$ is a feature which distinguishes our self-similar spectral triples from the known triples for manifolds. Note also that $\zeta$ may have a (simple) pole at $0$, namely if $1$ is an eigenvalue of the graph matrix $A.$ 

\begin{rem}
\label{rem-zeta}
In the general case, when $A$ is not diagonalizable, it is no longer true that the zeta-function has only simple poles.
Here the polynomials $P_{k}^{ij}(n)$ in equation~\eqref{eq-An} are non constant (of degree $m_k-1>0$ for $k=2,\ldots p$), and give power terms in the sum for $\zeta(z)$ written in the proof of Theorem~\ref{thm-zeta} (one gets sums of the form \(\sum_{n\ge 1} n^a\lambda_k^n\rho^{nz}\) for integers $a\le m_k-1$).
In this case $\zeta(z)$ has poles of order $m_j$ at \(z=\frac{\log\lambda_j+ 2\pi \imath k}{-\log \rho}\).
\end{rem}

\subsection{Heat kernel}

We derive here the asymptotic behaviour of the trace of the heat kernel
$\TR(e^{-t D^2})$ around $0$.
We give an explicit formula when the graph matrix $A$ is diagonalizable, and explain briefly afterwards (see Remark~\ref{rem-heatkernel}) how the formula has to be corrected when this is not the case.

For $r>0$, $\Re(\alpha)>0$ and $s\in\R$ we define
\begin{equation}
\label{eq-fnper1}
\tilde\f(r,\alpha,s) = \sum_{k=-\infty}^\infty \Gamma(\alpha+\frac{2\pi \imath k}{r})\, e^{2\pi \imath k s} \,.
\end{equation}
In particular,
\begin{equation}
\label{eq-fnper2}
\f_{r,a}(\sigma)  := \frac1r \tilde\f(r,\frac{a}{r},\frac{\sigma}{r})
\end{equation}
is a periodic function of period $r$, with average over a period \(\bar \f_{r,a}=\frac{1}{r} \Gamma(\frac{a}{r})\). 
\begin{theo}
\label{thm-heatkernel}
Consider the above spectral triple for a Bratteli diagram with graph matrix $A$ and parameter $\rho\in(0,1)$.
We assume that $A$ has no eigenvalue of modulus one.
Write its (generalized) eigenvalues $\lambda_j$, for $j=1, \ldots p$.
Let $m_j$ be the size of the largest Jordan block of $A$ to eigenvalue $\lambda_j$, and $P_j$ the polymomial of degree $m_j$ as in equation~\eqref{eq-An}.
Then the trace of the heat kernel has the following expansion as $t\to 0^+$:
\begin{equation}
\label{eq-traceheat}
\TR(e^{-t D^2}) = \sum_{j : |\lambda_j| > 1} 
 P_j \Bigl( \frac{1}{-\log \rho}\frac{d}{d s_j} \Bigr) \,
\f_{-2\log \rho, -s_j \log\rho}({-\log t}) \; t^{- \frac{s_j}{2}}
\  + h(t) \,
\end{equation}
where $s_j = \frac{\log\lambda_j}{-\log \rho}$, 
and $h$ is a smooth function around $0$. 
The leading term of the expansion comes from the Perron-Frobenius eigenvalue, and one has the equivalent
\begin{equation}
\label{eq-heatequiv}
\TR(e^{-t D^2}) \ \asym C^1_{\hat\Hh} \  \f_{-2\log \rho,\log\pf}(-\log t) \ t^{- \frac{s_0}{2}} \,.
\end{equation}
where $s_0= \frac{\log \pf}{-  \log \rho}$ is the spectral dimension as given in Theorem~\ref{thm-zeta}, and $C^1_{\hat\Hh}$ is given in equation~\eqref{eq-CH}.
\end{theo}
\begin{proof}
From equation~\eqref{eq-An} and the proof of Theorem~\ref{thm-zeta} we have,
\(
\# E_n = \sum_{j = 1}^p P_j(n) \lambda_j^n\,,
\)
where $P_j$ is a polynomial of degre $m_j$, for all $n$ greater than or equal to $\max\{m_j : 0\le j \le p\}$ 
(and if $n<\max\{m_j : 0\le j \le p\}$, $P_j$ has to be replaced by a polynomial of degree less than or equal to $m_j$, depending on $n$).
Setting $v = \rho^{-2}$ 
the trace of the heat kernel reads
\begin{equation}
\label{eq-heat-1}
\sum_n \# E_n e^{-t \rho^{-2n}}
= \sum_{j=1}^p \sum_{n=0}^{+\infty}P_j(n) \lambda_j^n \exp(-tv^n)  + g_1(t) \\
\end{equation}
where $g_1(t)$ is a smooth function around zero (the finite sum over $n<\max\{m_k : 0\le k \le p\}$ of terms correcting the formula for $\# E_n$).

First consider an eigenvalue of modulus less than one: $\lambda_j$ with $|\lambda_j|<1$.
We have
\[
 \bigl| P_j(n) \lambda_j^n e^{-tv^n} \bigr| \le \bigl| P_j(n) \lambda_j^{n/2} \bigr|  |\lambda_j|^{n/2} |e^{-tv^n}| \bigr|\le c_j |\lambda_j|^{n/2}
\] 
where $c_j$ is a constant.
So the corresponding series in equation~\eqref{eq-heat-1} is absolutely summable, and therefore eigenvalues of modulus less than 1 do not contribute to the singular behaviour of the trace.

Hence the trace of equation~\eqref{eq-heat-1} can be rewritten as 
\begin{eqnarray*}
\sum_n \# E_n e^{-t \rho^{-2n}}
& = &\sum_{j: |\lambda_j| > 1} \sum_{n=0}^{+\infty}P_j(n) v^{n\frac{s_j}{2}} \exp(-tv^n) + g_2(t) \\
& = & \sum_{j: |\lambda_j| > 1} P_j(\frac1{-\log \rho}\frac{d}{d s_j})
  \sum_{n=0}^{+\infty}v^{n \frac{s_j}{2}} \exp(-tv^n) + g_2(t),
\end{eqnarray*}
where $g_2$ is a smooth function around $0$.

Consider now an eigenvalue of modulus greater than one: $\lambda_j$ with $|\lambda_j|>1$. 
We thus have $\Re(s_j)>0$, and we can write
\[
\sum_n  v^{n \frac{s_j}{2}} e^{-t \rho^{-2n}} =
\sum_{n=-\infty}^{+\infty} v^{n\frac{s_j}{2}} \exp(-tv^n) -
 \sum_{n=1}^{+\infty} v^{-n \frac{s_j}{2}}\exp(-tv^{-n}).
\]
The term \(\sum_{n=1}^{+\infty} v^{-n \frac{s_j}{2}}\exp(-tv^{-n})\) being bounded at $0$ we only need to concentrate on the first sum.
Clearly \(t^{\frac{s_j}{2}}\sum_{n=-\infty}^{+\infty} v^{n\frac{s_j}{2}} \exp(-tv^n)=f(\frac{\log t}{\log v})\) where
\[
f(s) = \sum_{n=-\infty}^\infty v^{(n+s)\frac{s_j}{2}} \exp(-v^{n+s})\,.
\]
By standard arguments this series is uniformly convergent and defines a smooth $1$-periodic function $f$.
It follows that the singular behaviour of $\sum_{n=0}^{+\infty} v^{n\frac{s_j}{2}} \exp(-tv^n)$ as $t\to 0^+$ is given by $f(\frac{\log t}{\log v}) t^{-\frac{s_j}{2}}$.
So we get that the trace reads:
\begin{equation}
\label{eq-heat3}
\sum_n \# E_n e^{-t \rho^{-2n}}
 = \sum_{j: |\lambda_j| > 1} P_j(\frac1{-\log \rho}\frac{d}{d s_j}) f(\frac{\log t}{\log v}) t^{-\frac{s_j}{2}}  + h(t),
\end{equation}
where $h$ is a smooth function around $0$.
And we are left with identifying the function $f$.
Its Fourier coefficients are given by 
\[ 
\hat f_k = \int_{-\infty}^\infty e^{\log(v) \alpha_j x} \exp(-v^x) e^{2\pi \imath k x}dx  
= \frac{\Gamma(\alpha_j+\frac{2\pi \imath k}{\log v})}{\log v},
\]
so we see from equation~\eqref{eq-fnper1} and~\eqref{eq-fnper2} that \(f(s) = \frac{1}{-2\log\rho}\tilde\f(-2\log\rho,\alpha_j,-s)\).
Hence the singular term associated with $\lambda_j$ reads \( f(\frac{\log t}{\log v}) t^{-\frac{s_j}{2}} = \f_{-2\log \rho, -s_j \log \rho}(-\log t)\).
We substitute this back into equation~\eqref{eq-heat3} to complete the proof of equation~\eqref{eq-traceheat}.

Clearly $s_0= \frac{\log \pf}{-\log \rho}$ has the greatest modulus among all the other $s_j$.
Hence the leading term in the expansion comes from the Perron-Frobenius eigenvalue.
Since $\lambda_1=\pf$ is an eigenvalue, $P_1=C^1_{\hat \Hh}$ is the constant polynomial given in equation~\eqref{eq-CH}, which proves equation~\eqref{eq-heatequiv}.
\end{proof} 

We could have determined 
asymptotic expansion of the trace of the heat kernel using the inverse Mellin transform of the function $\zeta(2s)\Gamma( s)$. This is, of course, a lot more complicated than the direct computations. When using the inverse Mellin transform of $\zeta(2s)\Gamma(s)$ 
one can see that the origin of the periodic function $\f$ is directly related to the periodicity of the zeta function and that the appearence of the term $\log t$ in the trace of the heat kernel expansion arises from a simple pole of  $\zeta(s)$ at $s=0$ which amounts to a {\em double} pole of  $\zeta(2s)\Gamma(s)$ at $s=0$.

\begin{rem}
\label{rem-heatkernel}
If $A$ is not diagonalizable, we don't know how to compute contributions of eigenvalues of modulus one, so we assumed $A$ didn't have any in Theorem~\ref{thm-heatkernel}.
But if $A$ is diagonalizable, contributions of such eigenvalues are easily computed. 
Eigenvalues of modulus one, not equal to one, do not contribute to the singular behaviour of the trace.
Only the eigenvalue $\lambda_{j_0}=1$, if present in the spectrum of $A$, gives an extra term. 
And eigenvalues of modulus greater than one contribute as in equation~\eqref{eq-traceheat}, but with $P_j=1$ since $A$ is diagonalizable.
The trace of the heat kernel in this case has the following expansion as $t\to 0^+$:
\begin{equation}
\label{eq-traceheatdiag}
\TR(e^{-t D^2}) = \sum_{j : |\lambda_j| > 1} C^{j}_{\hat\Hh}  \, 
\f_{-2\log \rho, -s_j \log \rho}({-\log t}) \; t^{-\frac{s_j}{2}}
\ +C^{j_0}_{\hat\Hh}\frac{-\log t}{-2\log \rho}  + h(t) \,
\end{equation}
where $s_j=\frac{\lambda_j}{-\log \rho}$, $C^j_{\hat\Hh}$ is given in equation~\eqref{eq-CH}, $j_0$ is the index for the eigenvalue $\lambda_{j_0}=1$ (setting $C^{j_0}_{\hat\Hh}=0$ if $A$ has no eigenvalue equal to $1$) and $h$ is a smooth function around $0$. 
The trace of the heat kernel has therefore the same equivalent as in equation~\eqref{eq-heatequiv}.
\end{rem}

\subsection{Spectral state}\label{ssec-state}
There is a natural Borel probability measure on $\Pi_\infty$. Indeed, due to the primitivity of the graph matrix there is a unique Borel probability measure which is invariant under the action of the groupoid given by tail equivalence. We explain that.

If we denote by $[\gamma]$ the cylinder set of infinite paths beginning with $\gamma$, then invariance under the above mentionned groupoid means that $\mu([\gamma])$ depends only on the length $|\gamma|$ of $\gamma$ and its range,
i.e.\, $\mu([\gamma]) = \mu(|\gamma|,r(\gamma))$.
By additivity we have
\[
\mu([\gamma]) = \sum_{\varepsilon:s(\varepsilon)=r(\gamma)} \mu([\gamma \varepsilon])
\]
which translates into
\[
\mu(n,v) = \sum_{w} A_{vw}\mu(n+1,w)\,.
\]
The unique solution to that equation is
\[
\mu(n,v) = \pf^{-n}R_v
\]
where $R$ is the {\em right} Perron-Frobenius eigenvector of the adjacency matrix $A$,
normalized as in equation~\eqref{eq-normalization}.
So if $\gamma \in \Pi_n$ is a path of length $n$, then \(\mu([\gamma]) = \pf^{-n} R_{r(\gamma)}\).

\begin{theo}
\label{thm-specmeas} All operators of the form $\pi(f)$, $f\in C(\Pi_\infty)$, are strongly regular.
Moreover,  the measure $\mu$ defined on $f\in C(\Pi_\infty)$ by
\[
 \mu(f) : =  \Tt(\pi(f)) 
\]
is the unique measure which is invariant under the groupoid of tail equivalence.
\end{theo}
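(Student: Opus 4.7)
The plan is to apply Corollary~\ref{cor-cesar}. The operator $|D|$ has eigenvalue $\rho^{-n}$ on $\ell^2(E_n)$: indeed $D$ exchanges $\phi_e$ and $\phi_{e^{\op}}$ within $\ell^2(E_n)$ with factor $\rho^{-n}$, so $D^2 = \rho^{-2n}\id$ there. Hence the $n$-th eigenspace of $|D|$ is $F_n=\ell^2(E_n)$ with $\dim F_n=\#E_n$. Since $\pi(f)$ is diagonal in the basis $\{\phi_e\}_{e\in E}$ with eigenvalues $f(s(e))$,
\[
\bar A_n \;=\; \frac{1}{\#E_n}\sum_{e\in E_n} f(s(e)) \;=\; \int_{\Pi_\infty} f\,d\mu_n,\qquad \mu_n \;:=\; \frac{1}{\#E_n}\sum_{e\in E_n}\delta_{s(e)}.
\]
Thus strong regularity of all $\pi(f)$ reduces to weak convergence of the probability measures $\mu_n$ on $\Pi_\infty$, and by Corollary~\ref{cor-cesar}, $\Tt(\pi(f)) = \lim_n \int f\, d\mu_n$.

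Next I would compute $\mu_n$ on the cylinders $[\gamma]$, $\gamma\in\Pi_k$, which form a clopen basis of the topology. For $n\ge k+1$, the source $s(e)$ of $e\in E_n$ corresponds to a length-$n$ path whose last edge has source $s^2(h)$ for some $h\in\HE$. Such a path starts with $\gamma$ iff its middle segment is a path of length $n-1-k$ from $r(\gamma)$ to $s^2(h)$; summing over $h\in\HE$ and the two incident edges in each $h$ gives
\[
\#\{e\in E_n : s(e)\in[\gamma]\} \;=\; \sum_{h\in\HE} A^{n-1-k}_{r(\gamma),\, s^2(h)},
\]
while from the proof of Theorem~\ref{thm-zeta}, $\#E_n = \sum_{h\in\HE}\sum_v A^{n-1}_{v,s^2(h)}$. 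Inserting the Perron-Frobenius asymptotic $A^m_{ij} = R_i L_j\,\pf^m + O(|\lambda_2|^m)$ into the ratio and using the normalization $\sum_v R_v = 1$ from \eqref{eq-normalization}, the factor $\sum_{h\in\HE} L_{s^2(h)}$ cancels and
\[
\mu_n([\gamma]) \;\xrightarrow[n\to\infty]{}\; \pf^{-k}\, R_{r(\gamma)} \;=:\; \mu([\gamma]).
\]

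Since the locally constant functions (finite $\CM$-linear combinations of $\chi_{[\gamma]}$) form a $\ast$-subalgebra of $C(\Pi_\infty)$ separating points, they are uniformly dense by Stone--Weierstrass. Combined with the uniform bound $|\int f\,d\mu_n|\le \|f\|_\infty$, the pointwise convergence on cylinders extends by a standard $\epsilon/3$-argument to $\int f\,d\mu_n \to \int f\,d\mu$ for every $f\in C(\Pi_\infty)$, where $\mu$ is the unique Borel probability measure with $\mu([\gamma]) = \pf^{-|\gamma|} R_{r(\gamma)}$. Corollary~\ref{cor-cesar} then yields strong regularity of $\pi(f)$ and $\Tt(\pi(f)) = \int f\,d\mu$.

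Finally, the formula $\mu([\gamma]) = \pf^{-|\gamma|}R_{r(\gamma)}$ depends only on the length and range of $\gamma$, so $\mu$ is invariant under tail equivalence; the uniqueness of such an invariant Borel probability measure was recalled (from primitivity of $A$) in the paragraph preceding the statement. The main obstacle is a clean bookkeeping of $\#\{e\in E_n:s(e)\in[\gamma]\}$ so that the Perron-Frobenius cancellation comes out correctly in the ratio; everything else is routine.
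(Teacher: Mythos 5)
Your proposal is correct and follows essentially the same route as the paper: apply Corollary~\ref{cor-cesar}, count the edges $e\in E_n$ with $s(e)$ in a cylinder $[\gamma]$ via $\sum_{h\in\HE}A^{n-|\gamma|-1}_{r(\gamma),s^2(h)}$, use the Perron--Frobenius asymptotics and the normalization \eqref{eq-normalization} to get the limit $\pf^{-|\gamma|}R_{r(\gamma)}$, and identify this with the unique tail-invariant measure. The only differences are cosmetic (you cancel $\sum_h L_{s^2(h)}$ directly instead of the paper's intermediate identification $U=R$, and you spell out the eigenspace identification $F_n=\ell^2(E_n)$ and the density argument extending from locally constant to all continuous functions).
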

\begin{proof}
Let $f$ be a measurable function on $\Pi_\infty$, and set 
\[
\mu_n(f)= \frac{\TR_{E_n}\bigl(\pi(f)\vert_{E_n}\bigr)}{\# E_n}=\frac{\sum_{e\in E_n} f(s(e))}{\# E_n}.
\]
To check that the sequence $(\mu_n(f))_{\NM}$ converges it suffices to consider $f$ to be a characteristic function of a base clopen set for the (Cantor) topology of $\Pi_\infty$.
Let $\gamma$ be a finite path of length $|\gamma|<n$ and denote by $\chi_\gamma$ the characteristic function on $[\gamma]$. 
Then $\chi_\gamma (s(e))$ is non-zero if the path $s(e)$ starts with $\gamma$. Given that the tail of the path $s(e)$ is determined by the choice function $\tau$, the number of $e\in E_n$ for which $\chi_\gamma (s(e))$ is non-zero coincides with the number of paths of length $n-|\gamma|-1$ which start at $r(\gamma)$ and end at $s(s(e)_{n-1}) = s^2(h)$, for some $h\in \hat\Hh$.
Hence
\[
\sum_{e\in E_n} \chi_\gamma (s(e)) = \sum_{h\in \hat\HE}
  A^{n-|\gamma|-1}_{r(\gamma)\,s^2(h)} \,.
\]
As noted before in the proof of Theorem~\ref{thm-zeta}, the cardinality of $E_n$ is asymptotically $C^1_{\hat\Hh}\pf^{n}$, so we have
\[
\mu_n (\chi_\gamma)=
\frac{\sum_{e\in E_n} \chi_\gamma(s(e))}{\#E_n} = \pf^{-|\gamma|-1}
\frac{1}{C^1_{\hat\Hh}} \sum_{h\in \hat\HE} R_{r(\gamma)} L_{s^2(h)}
\ (1 + o(1) ) \,.
\]
Set \(U_v = (\pf^{-1}/C^1_{\hat \Hh}) \sum_{h\in \hat\HE} R_{v} L_{s^2(h)})\).
We readily check that $U$ is a (right) eigenvector of $A$ with eigenvalue $\pf$, and since its coordinates add up to $1$, we have $U = R$.
So we get 
\[
\mu_n(\chi_\gamma) = \pf^{-|\gamma|} R_{r(\gamma)} (1+o(1)) \xrightarrow{n\rightarrow +\infty} \pf^{-|\gamma|} R_{r(\gamma)}
= \mu([\gamma])\,.
\]
Now Corollary~\ref{cor-cesar} implies that $\pi(f)$ is strongly regular and $\Tt(\pi(f))=\mu(f)$.
\end{proof}

Below, when we discuss in particular the transversal part of the Dirichlet forms for a substitution tiling, we will encounter operators which are weakly regular, but not strongly regular, and so, a priori, the spectral state is not multiplicative on tensor products of these.
The following lemma will be useful in this case.
\begin{lemma}
\label{lem-stateres}
Consider the above spectral triple for a Bratteli diagram with graph matrix $A$ and parameter $\rho\in(0,1)$.
Let $A\in \Bb(\HS)$ be such that $\bar A_n \stackrel{n\rightarrow \infty}{\sim} e^{\imath n\varphi}$ for some $\varphi\in(0,2\pi)$.
Then
\[
\TR(e^{-t D^2} A) \asym e^{\imath\frac{\phi \log t}{2\log\rho}} {C^1_{\hat\HE}}\,
\f_{-2\log \rho,\log\pf+\imath\varphi}(-\log t) \, t^\frac{\log \pf}{2\log\rho}\,.
\]
In particular, $A$ is weakly regular and $\Tt(A) = 0$.
\end{lemma}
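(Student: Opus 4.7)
The plan is to compute $\TR(e^{-tD^2}A)$ directly by diagonalizing $|D|$ and then invoke Lemma~\ref{lem-specstate}. Since $|D|$ has eigenvalues $\rho^{-n}$ ($n\ge 1$) on the eigenspace $\ell^2(E_n)$ of dimension $\#E_n$, one has
\[
\TR(e^{-tD^2}A)=\sum_{n\ge 1}e^{-t\rho^{-2n}}\,\#E_n\,\bar A_n.
\]
Writing $\bar A_n = e^{i n\varphi}+\epsilon_n$ with $\epsilon_n\to 0$, and using the expansion $\#E_n=\sum_{j=1}^p C^j_\HE\lambda_j^n$ from the proof of Theorem~\ref{thm-zeta} (diagonalizability is inherited from the ambient context), the trace splits as
\[
\sum_{j=1}^p C^j_\HE\sum_n(\lambda_j e^{i\varphi})^n e^{-t\rho^{-2n}} \;+\; \sum_n\#E_n\,\epsilon_n\,e^{-t\rho^{-2n}}.
\]
The error term is controlled by the standard $\epsilon$--$N$ argument: split the sum at $N$ with $|\epsilon_n|<\delta$ for $n\ge N$; the finite head is bounded, the tail is dominated by $\delta\,\TR(e^{-tD^2})$, and arbitrariness of $\delta$ shows that this contribution is $o(\TR(e^{-tD^2}))$.

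Next I would rerun the proof of Theorem~\ref{thm-heatkernel} verbatim with the substitution $\lambda_j\leadsto\lambda_j e^{i\varphi}$. Since $|\lambda_j e^{i\varphi}|=|\lambda_j|$, the partition of eigenvalues by modulus is unaffected: the $|\lambda_j|<1$ contribution is bounded, the $|\lambda_j|=1$ contribution remains bounded by exactly the same Abel-summation argument provided the twisted eigenvalue $\lambda_j e^{i\varphi}$ differs from $1$, and the leading singular term comes from the Perron--Frobenius mode. The Fourier computation that produced $\Gamma(\alpha+2\pi ik/\log v)/\log v$ now runs with the complex exponent $\alpha=(\log\pf+i\varphi)/(-2\log\rho)$ and yields
\[
C_\HE\,\f_{-2\log\rho,\log\pf+i\varphi}(-\log t)\,t^{(\log\pf+i\varphi)/(2\log\rho)}.
\]
Splitting $t^{i\varphi/(2\log\rho)}=e^{i\varphi\log t/(2\log\rho)}$ gives the displayed asymptotic.

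For $\Tt(A)=0$ I would invoke Lemma~\ref{lem-specstate}. Reading off
\[
f_A(\tau) = C_\HE\,e^{-i\varphi\tau/(2\log\rho)}\,\f_{-2\log\rho,\log\pf+i\varphi}(\tau)
\]
and expanding the periodic factor as a Fourier series $\sum_k c_k e^{2\pi ik\tau/(-2\log\rho)}$, the identity $\Ll[e^{i\omega\tau}](s)=(s-i\omega)^{-1}$ (valid for $\Re s>0$) gives
\[
s\,\Ll[f_A](s) = C_\HE\sum_k c_k\,\frac{s}{s-i(2\pi k+\varphi)/(-2\log\rho)}.
\]
The key point is that $2\pi k+\varphi\neq 0$ for every $k\in\ZM$, because $\varphi\in(0,2\pi)$ is strictly bounded away from multiples of $2\pi$; hence every term tends to $0$ as $s\to 0^+$, whence $\lim_{s\to 0^+}s\,\Ll[f_A](s)=0$. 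Since by Theorem~\ref{thm-heatkernel} the denominator $\Ll[f](s)$ has a nonzero residue at $0$, Lemma~\ref{lem-specstate} then yields $\Tt(A)=0$, and $A$ is weakly regular in the sense of Section~\ref{ssec-specstate}.

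The main obstacle I anticipate is a minor but delicate one: making sense of the $\asym$-symbol when the periodic factor $\f_{-2\log\rho,\log\pf+i\varphi}(-\log t)$ could vanish on a discrete set of $t$'s. The cleanest way to handle this is to interpret the asymptotic in the Laplace-transform (integrated) sense — which is exactly the form in which it is used to derive $\Tt(A)=0$ — rather than pointwise, and to observe that the comparison between the main sum and $\TR(e^{-tD^2})$ only needs to hold in mean to feed into Lemma~\ref{lem-specstate}.
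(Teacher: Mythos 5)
Your proposal is correct and takes essentially the same route as the paper: the paper's proof likewise reduces $\TR(e^{-tD^2}A)$ to the twisted sum $\sum_n e^{\imath n\varphi}v^{\alpha(n+\sigma)}e^{-v^{n+\sigma}}$, absorbs the phase into a complex shift of the exponent to recognize $e^{-\imath\sigma\varphi}\,\tilde\f(\log v,\frac{\log\pf+\imath\varphi}{\log v},-\sigma)$, and gets $\Tt(A)=0$ from the Fourier expansion together with $\varphi+2\pi k\neq 0$. Your extra bookkeeping (the $\epsilon_n$ error via the Corollary~\ref{cor-cesar}-type estimate, the subleading eigenvalues, and reading the asymptotics in the Laplace-transform sense where the oscillating target may vanish) only makes explicit steps the paper leaves implicit.
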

\begin{proof}
As before we set $\sigma = \frac{\log t}{\log v}$, $v = \rho^{-2}$,
$\alpha = \frac{\log\pf}{-2\log \rho}$
so that we have to determine the asymptotic behaviour of
$\sum_{n=1}^\infty e^{\imath n \varphi} v^{\alpha(n+\sigma)} e^{-v^{n+\sigma}}$ when $\sigma\to -\infty$.
Since
$\sum_{n=-\infty}^{-1} e^{\imath n \varphi} v^{\alpha(n+\sigma)} e^{-v^{n+\sigma}}$
is absolutely convergent the sum over $\NM$ has the same asymptotic behaviour than the sum over $\ZM$.
Now
\begin{eqnarray*}
\sum_{n=-\infty}^\infty e^{\imath n \varphi}
v^{\alpha(n+\sigma)} e^{-v^{n+\sigma}} & = &
 e^{-\imath \sigma \varphi} \sum_{n=-\infty}^\infty
v^{(\alpha+\frac{\imath \varphi}{\log v})(n+\sigma)} e^{-v^{n+\sigma}}\\ &=&
e^{-\imath \sigma \varphi}\, \frac1{\log v}\, \tilde \f(\log
v,\frac{\log\pf+\imath \varphi}{\log v},-\sigma) 
\end{eqnarray*}
from which the first statement follows.
$\Tt(A)$ is equal to the mean of the function
$\sigma\mapsto e^{-\imath \sigma \varphi}\,\frac1{\log v}\,\tilde \f(\log v,\frac{\log\pf+\imath \varphi}{\log v},-\sigma)$
which is zero since for all integer $k$ is $\varphi + 2\pi k \neq 0$.
\end{proof}
We now consider the tensor product of two spectral triples associated
with Bratteli diagrams, one with parameter $\rho<1$ and Perron Frobenius eigenvalue $\pf$ the other with
parameter $\rho'<1$ and Perron Frobenius eigenvalue $\pf'$. 
\begin{lemma}
\label{lem-resphi} 
Let $A$ be as in the last lemma, then $\Tt(A\otimes \id) = 0$ if for all
integer $k,k'$ 
\begin{equation}
\varphi + 2\pi k + 2\pi \frac{\log \rho}{\log \rho'}k'\neq 0.
\end{equation}
\end{lemma}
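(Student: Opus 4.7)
The plan is to apply Lemma~\ref{lem-prodstate} with $A_{1}=A$ and $A_{2}=\id$, which reduces the claim to showing that
\[
\lim_{s\to 0^{+}} s\,\Ll[f_{A}f_{2}](s) \;=\; 0,
\]
the denominator $s\,\Ll[f_{1}f_{2}](s)$ having a non-zero limit by the standing hypothesis of Lemma~\ref{lem-prod} (this is the same limit that ensures $\Tt(\id\otimes\id)=1$). So all the work goes into the numerator.

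First I would write both factors explicitly. From Lemma~\ref{lem-stateres},
\[
 f_{A}(u) \;=\; e^{-\imath\varphi u/(2\log\rho)}\, C_{\HE}\,
 \f_{-2\log\rho,\,\log\pf+\imath\varphi}(u),
\]
and from Theorem~\ref{thm-heatkernel} applied to the second triple, $f_{2}(u) = C^{1}_{\HE'}\,\f_{-2\log\rho',\,\log\pf'}(u)$. The key structural point is that each is an \emph{absolutely convergent} Fourier series of complex exponentials: the coefficients of $\f_{r,a}$ supplied by \eqref{eq-fnper1}--\eqref{eq-fnper2} are values of $\Gamma$ along a vertical line, which decay super-exponentially in $|k|$ by Stirling. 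Multiplication by $e^{-\imath\varphi u/(2\log\rho)}$ shifts the frequencies of $f_{A}$ from $\frac{2\pi k}{-2\log\rho}$ to $\frac{\varphi+2\pi k}{-2\log\rho}$, and multiplying against $f_{2}$ gives a doubly-indexed, absolutely convergent expansion of $f_{A}f_{2}$ in exponentials $e^{\imath\omega_{k,k'}u}$ with
\[
 \omega_{k,k'} \;=\; \frac{\varphi+2\pi k}{-2\log\rho}
 + \frac{2\pi k'}{-2\log\rho'}, \qquad (k,k')\in\ZM^{2}.
\]

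Next I would invoke the computation recorded after Lemma~\ref{lem-heat-zeta}: for a single exponential $f(u)=e^{\imath a u}$ one has $s\,\Ll[f](s) = s/(s-\imath a)$, which tends to $0$ as $s\to 0^{+}$ if $a\neq 0$ and to $1$ if $a=0$. Interchanging limit and sum in the doubly-indexed expansion then identifies $\lim_{s\to 0^{+}} s\,\Ll[f_{A}f_{2}](s)$ with the coefficient of the zero-frequency term, i.e.\ with the mean of $f_{A}f_{2}$.

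Finally, I would observe that $\omega_{k,k'}=0$ is equivalent, after clearing denominators, to $\varphi + 2\pi k + 2\pi\tfrac{\log\rho}{\log\rho'}k' = 0$, which is exactly what the hypothesis excludes. Hence no frequency vanishes, the mean of $f_{A}f_{2}$ is zero, and $\Tt(A\otimes\id)=0$. The only genuinely delicate step is the interchange of $\lim_{s\to 0^{+}}$ and $\sum_{k,k'}$ in the third paragraph, where one must dominate $\left|\tfrac{s}{s-\imath\omega_{k,k'}}c_{k,k'}\right|$ on a one-sided neighbourhood of $0$; this is where the super-exponential decay of $\Gamma$ along vertical lines is used, and it is the point I would write out carefully rather than treat as routine.
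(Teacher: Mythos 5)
Your proposal is correct and follows essentially the same route as the paper's own proof: reduce via Lemma~\ref{lem-prodstate} to the mean of $f_A f_2$, expand the two $\tilde\f$-factors (and the prefactor $e^{-\imath\varphi u/(2\log\rho)}$) into Fourier series, and note that every frequency $\omega_{k,k'}$ is non-zero exactly under the non-resonance hypothesis, so the mean vanishes. The only difference is that you spell out the interchange of $\lim_{s\to 0^+}$ with the double sum, which the paper leaves implicit; note that this step is even easier than you suggest, since $\left|\frac{s}{s-\imath\omega}\right|\le 1$ for all real $s>0$ and $\omega$, so the absolute summability of the $\Gamma$-coefficients alone gives the required domination.
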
  
\begin{proof} By the above general results
$\Tt(A\otimes \id)$ is equal to the mean of the function 
\(s\mapsto e^{\imath \frac{s\varphi}{\log v}}\,\tilde\f(\log v,\frac{\log\pf+\imath\varphi}{\log v},\frac{-s}{\log v})\,\tilde\f(\log v',\frac{\log\pf'}{\log v'},\frac{-s}{\log v'})\) 
devided by the mean of the function
\(s\mapsto \tilde\f(\log v,\frac{\log\pf}{\log v},\frac{-s}{\log v})\,\tilde\f(\log v',\frac{\log\pf'}{\log v'},\frac{-s}{\log v'})\).
The latter is always strictly greater than zero.
By developing the two $\tilde\f$-functions into Fourier series one sees that the former mean is zero
if  $s\mapsto e^{\imath\frac{s\varphi}{\log v}}e^{-\imath\frac{2\pi k s}{\log v}}e^{-\imath \frac{2\pi k' s}{\log v'}}$
is oscillating for all $k,k'$.
\end{proof}
Motivated by this lemma we define:
\begin{defini}
\label{def-resphase} Let $\rho,\rho'\in(0,1)$. We call
$\phi\in (0,2\pi)$ a {\em non resonant phase} (for $(\rho,\rho'$) if  
\begin{equation}
\label{eq-resphase}
\varphi + 2\pi k + 2\pi \frac{\log \rho}{\log \rho'}k'\neq 0 \,, \quad \forall k,k'\in\ZM\,.
\end{equation}
\end{defini}
If $\frac{\log \rho}{\log\rho'}$ is irrational and
for instance $\varphi = 2\pi \frac{\log \rho}{\log \rho'}$, then
$\varphi$ is resonant and, as follows easily from the calculation in
the proof, $\Tt(A\otimes \id)\neq 0$.
In particular, $\Tt(A\otimes \id)\neq \Tt_1(A)\Tt_2(1)$.

\subsection{Quadratic form}\label{ssec-form}
In Section~\ref{ssec-Laplace} we considered a
quadratic form which we specify to the context of spectral triples
defined by Bratteli diagrams. Let $\zeta$ and $s_0$ be the zeta function
with its abscisse of convergence and $\mu$ be the spectral measure on
$\Pi_\infty$ as defined in
Section~\ref{ssec-zeta} and
\ref{ssec-state}. Equations~\eqref{eq-rep} and~\eqref{eq-Dirac} imply, 
for $e\in E_{n}$
\begin{equation}
\label{eq-deltaef0}
[D,\pi(f)] \phi(e) = (\delta_{e}^\alpha f)  \, \phi(e^\op) \,, \quad
\text{\rm with}\quad \delta_e f = \frac{f(r(e)) -f(s(e))}{\rho^n} \,.
\end{equation}
Equation~\eqref{eq-Dirform} therefore becomes
\begin{eqnarray}\nonumber
Q(f,g) &=& \lim_{s\rightarrow s_0^+} \frac{1}{\zeta(s)} \TR \bigl(
|D|^{-s} [D,\pi(f)]^\ast [D,\pi(g)] \bigr) 
\\ \label{eq-Q2} &=& 
 \lim_{s\rightarrow s_0^+} \frac{1}{\zeta(s)} 
\sum_{n\geq 1} \#E_n \rho^{ns} \; q_n(f,g)
\end{eqnarray}
with 
\begin{equation}\label{eq-qn}
q_n(f,g) = \frac{1}{\#E_n}\sum_{e\in E_n} \; \overline{\delta_e f} \;
\delta_e g\,. 
\end{equation}
Note that $ \lim_{s\rightarrow s_0^+} \frac{1}{\zeta(s)} 
\sum_{n\geq 1} \#E_n \rho^{ns} = 1$. We thus have the following simple
result:
\begin{lemma}\label{lem-qn}
$Q(f,g) = \lim_{n\to \infty} q_n(f,g)$ provided the limit exist.
\end{lemma}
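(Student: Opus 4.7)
The plan is to recognize the expression for $Q(f,g)$ as a weighted average of the $q_n(f,g)$ and show that the family of weights concentrates at infinity as $s\to s_0^+$, so that the weighted average inherits the value of $\lim_{n\to\infty} q_n(f,g)$. Concretely, for $s>s_0$ set
\[
w_n(s) \;=\; \frac{\#E_n\,\rho^{ns}}{\zeta(s)}\,,
\]
so that $w_n(s)\ge 0$ and $\sum_{n\ge 1} w_n(s) = 1$, making $(w_n(s))_{n\ge 1}$ a probability measure on $\NM$. Formula~\eqref{eq-Q2} then reads $Q(f,g)=\lim_{s\to s_0^+}\sum_{n\ge 1} w_n(s)\,q_n(f,g)$.

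The key observation is that for every fixed $N\in\NM$, $w_n(s)\to 0$ as $s\to s_0^+$ for each $n\le N$: indeed, $\#E_n\rho^{ns}$ is continuous and bounded on a neighborhood of $s_0$, while $\zeta(s)\to+\infty$ as $s\to s_0^+$ (by definition of the abscissa of convergence, together with the fact that $\zeta$ is singular at $s_0$, cf.~Theorem~\ref{thm-zeta}). Consequently $\sum_{n\le N} w_n(s)\to 0$, and hence $\sum_{n>N} w_n(s)\to 1$, as $s\to s_0^+$.

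Now assume $\ell := \lim_{n\to\infty} q_n(f,g)$ exists. Since the sequence is convergent it is bounded, say $|q_n(f,g)|\le M$ for all $n$. Given $\epsilon>0$, choose $N$ so large that $|q_n(f,g)-\ell|<\epsilon$ for $n>N$. Splitting the sum,
\[
\Bigl|\sum_{n\ge 1} w_n(s)\,q_n(f,g) \,-\, \ell\Bigr|
\;\le\; \sum_{n\le N} w_n(s)\bigl(|q_n(f,g)|+|\ell|\bigr)
\;+\; \sum_{n>N} w_n(s)\,|q_n(f,g)-\ell|
\;\le\; (M+|\ell|)\sum_{n\le N}w_n(s) \;+\; \epsilon\,.
\]
Letting $s\to s_0^+$, the first term vanishes by the observation above, so the lim-sup of the left hand side is at most $\epsilon$. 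Since $\epsilon>0$ was arbitrary, $\lim_{s\to s_0^+}\sum_{n\ge 1}w_n(s)\,q_n(f,g)=\ell$, which is precisely the claim.

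There is no real obstacle here beyond verifying the concentration statement $w_n(s)\to 0$ for each fixed $n$; the rest is the standard Abelian-type argument that an ordinary limit dominates any such normalized weighted average whose mass escapes to infinity.
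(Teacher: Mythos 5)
Your argument is correct and is precisely the one the paper intends: the paper states the lemma as a ``simple result'' immediately after noting that $\lim_{s\to s_0^+}\frac{1}{\zeta(s)}\sum_{n\ge 1}\#E_n\rho^{ns}=1$, i.e.\ that the weights $w_n(s)=\#E_n\rho^{ns}/\zeta(s)$ form a probability distribution whose mass escapes to infinity as $s\to s_0^+$ (since $\zeta(s)\to+\infty$ by the simple pole at $s_0$ with positive residue from Theorem~\ref{thm-zeta}), so your Abelian-type averaging argument is exactly the omitted proof.
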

The following can be said in general.
\begin{proposi}
\label{prop-form}
The quadratic form $Q$ 
is symmetric, positive definite, and Markovian on 
\(\Dd=\bigl\{f \in L^2_{\RM}(\Pi_\infty,d \mu) \, : \,
Q(f,f) < +\infty \bigr\}\,\).
\end{proposi}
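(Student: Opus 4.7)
The plan is to derive the three properties of $Q$ from the corresponding properties of the finite-dimensional approximants $q_n$ defined in \eqref{eq-qn}, and then transfer them through the limit formula \eqref{eq-Q2}. The key observation is that, for each fixed $s>s_0$, the coefficients $w_n(s):=\#E_n\,\rho^{ns}/\zeta(s)$ are nonnegative and sum to $1$, so that $\sum_n w_n(s)\,q_n(f,g)$ is a convex combination of the $q_n(f,g)$, and $Q(f,g)$ is its Abel-type limit as $s\to s_0^+$. Convex combinations and limits preserve symmetry and positive semi-definiteness, so once these are established for each $q_n$ they pass automatically to $Q$.

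The symmetry and positivity step is then essentially a direct calculation: for real-valued $f,g$, the formula \eqref{eq-qn} immediately gives $q_n(f,g)=q_n(g,f)$ and $q_n(f,f)=\frac{1}{\#E_n}\sum_{e\in E_n}|\delta_e f|^2\ge 0$. Applying the Cauchy--Schwarz inequality uniformly in $n$ to each $q_n$, and then passing to the limit, shows that $\Dd$ is a linear subspace of $L^2_\RM(\Pi_\infty,d\mu)$ and that $Q$ defines a symmetric positive semi-definite bilinear form on it. For the Markov property I would use the pointwise contraction estimate: for any normal contraction $\phi:\RM\to\RM$ (for instance $\phi(t)=(t\wedge 1)\vee 0$) and any edge $e\in E_n$,
\[
|\delta_e(\phi\circ f)| \;=\; \rho^{-n}\bigl|\phi(f(r(e)))-\phi(f(s(e)))\bigr| \;\leq\; \rho^{-n}\bigl|f(r(e))-f(s(e))\bigr| \;=\; |\delta_e f|.
\]
This yields $q_n(\phi\circ f,\phi\circ f)\leq q_n(f,f)$ for every $n$, and the same convex-combination/limit argument then gives $Q(\phi\circ f,\phi\circ f)\leq Q(f,f)$ together with the stability of $\Dd$ under $\phi$, which is exactly the Markov property.

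The real obstacle, flagged by the authors themselves, is not the proof of these three properties but the interpretation of the pointwise values $f(r(e))$ and $f(s(e))$ when $f$ is only an $L^2$-equivalence class on $\Pi_\infty$. Strictly speaking, $\Dd$ must be understood as the space of $f\in L^2_\RM(\Pi_\infty,d\mu)$ admitting a representative for which the differences $\delta_e f$ make sense and the sums $q_n(f,f)$ converge in the limit \eqref{eq-Q2}; identifying a natural dense core on which $Q$ is actually closable, so that its generator is a well-defined Laplacian, is precisely the issue postponed to the later sections on substitution tiling spaces.
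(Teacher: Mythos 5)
Your proof is correct and follows essentially the same route as the paper: symmetry and nonnegativity are immediate from \eqref{eq-qn}, and Markovianity comes from the pointwise estimate $|\delta_e(\phi\circ f)|\le|\delta_e f|$ passed through the limit defining $Q$. The only cosmetic difference is that the paper applies this estimate to a smooth $\epsilon$-approximation $g_\epsilon$ of the unit contraction (the standard Fukushima formulation of the Markov property) rather than to the hard cutoff $(t\wedge 1)\vee 0$, and it does not spell out the convex-combination bookkeeping or the issue of pointwise representatives that you rightly flag.
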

\begin{proof}
The form is clearly symmetric and positive definite. 
Let $\epsilon>0$ and consider a smooth approximation $g_\epsilon$ of
the restriction to $[0,1]$: 
\(g_\epsilon(t)=t\) for \(t\in [0,1]\), \(-\epsilon \le g_\epsilon(t)
\le 1+ \epsilon\) for \(t \in \RM\), and \(0\le  g_\epsilon(t) -
g_\epsilon(t')\le t-t'\) for all \(t\le t'\). 
This last property implies that 
\( |\delta_{e} \ g_\epsilon \circ f | \le |\delta_{e} f | \) so that
 \(Q(g_\epsilon \circ f,g_\epsilon \circ f)\le
 Q(f,f)\). This proves that $Q$ is Markovian. 
\end{proof}
For a precise evaluation of this quadratic form on certain domains 
we have, however, to consider more specific systems.

\subsection{A simple example: The graph with one vertex and two edges}
\label{sec-graph1}
Remember that one difficulty with the Dirichlet form defined by our
spectral triple (equation~\eqref{eq-Dirform} in
Section~\ref{ssec-Laplace}) is that we need to specify a core for the
form. 
We provide here an example where such a core can be suggested with the help of
an additional structure.

We consider the graph $\Gg$ which has one vertex and two
edges, which are
necessarily loops. 
Call one edge $0$ and the other $1$. Let $\lp$ be the edge $0$.
Then it is clear that $\Pi_\infty$ can be identified with  the set of
all $\{0,1\}$-sequences and $\Pi_{\infty \ast}$ with those sequences
which eventually become $0$. 
We consider the spectral triple of Section~\ref{sec-ST-Bratteli}
associated with the graph $\Gg$ and parameter $\rho=\frac12$.
There is not much choice for the horizontal egdes,
$\hat\HE=\{(0,1),(1,0)\}$, nor for $\hat\tau$, $\hat\tau(1) = 0$ and
$\hat\tau(0) = 0$. 
We will look at this system from two different angles justifying two
different cores for the Dirichlet form.

\paragraph{Group structure}
The space of $\{0,1\}$-sequences carries an Abelian group
structure. In fact, if we identify  $\Pi_n$ with $\Z_{2^n}$ using
$\gamma \mapsto \sum_{i=1}^n \gamma_i 2^{i-1}$ then we can write
$\Pi_\infty$ as the inverse limit group 
$$\Pi_\infty = \lim_{\longleftarrow} \left
  (\Z_{2^n}\stackrel{\pi_n^{n+1}}{\longleftarrow}
  \Z_{2^{n+1}}\right)$$ 
where $\pi_n^{n+1}$ maps $m\: mod \: 2^{n+1}$ to  $m\: mod \:2^n$.

It follows that $C(\Pi_\infty)$ is isomorphic to the group
$C^*$-algebra of the Pontrayagin dual  
\[
 \hat\Pi_\infty = \lim_{\longrightarrow} \left
   (\hat\Z_{2^n}\stackrel{\hat\pi_n^{n+1}}{\longrightarrow}
   \hat\Z_{2^{n+1}}\right)\,. 
\]
This suggest that a reasonable choice for the domain of the form would
be the (algebraic) group algebra $\CM \hat\Pi_\infty$ 
whose elements are finite linear
combinations of elements of $\hat\Pi_\infty$. Now $\CM \hat\Pi_\infty$
corresponds to  the subalgebra  $C_{l.c.}(\Pi_\infty)\subset
C(\Pi_\infty)$ of locally constant functions. We have
$\lim_{n\to \infty} q_n(f,g)=0$ if $f$ and $g$ are locally constant
and so by Lemma~\ref{lem-qn} $Q$ exists on that core. However, 
$Q$ is identically $0$ on the domain defined by the core. So the
algebraic choice of the domain which was motivated by the group
structure is not very interesting.

\paragraph{Embedding into $S^1$}
Note that  $\Pi_n$ can be identified with $\hat \Z_{2^n} $ via the map
\(\gamma \mapsto \exp(2\pi\imath \sum_{j=1}^n  \gamma_j
2^{-j+1}(\cdot))\) so that $ \Pi_{\infty\ast}\cong\hat\Pi_\infty =
\{\exp(2\pi\imath p (\cdot)):p\in\Z[\frac12]\cap [0,1)\}$.  
This suggest to view $\Pi_{\infty\ast}$ (via
$\gamma \mapsto
\exp(2\pi\imath \sum_{i\geq 1}  \gamma_i 2^{-i+1})\in S^1 $) as a
dense subset of $S^1$. 
The inclusion $\Pi_{\infty\ast}\hookrightarrow S^1$
extends to a continuous surjection $\eta:\Pi_\infty
\to S^1$ which is almost everywhere one-to-one 
Furthermore the push forward of the measure $\mu$ on $\Pi_\infty$ 
is the Lebesgue measure on $S^1$. 
Hence the pull back $\eta^*$ induces an isometry between $L^2(S^1)$
and $L^2(\Pi_\infty)$. 
It follows that $\eta^*(C(S^1))$ is dense in $L^2(\Pi_\infty)$.
This suggests to take as core for the quadratic form 
the pull backs of trigonometric polynomials
over $S^1$. Now one sees from Equ.~\ref{eq-qn} that $\lim_{n\to
  \infty} q_n(f,g)$ exists, provided $\rho = \frac12$. In fact,
$\lim_{n\to  \infty} q_n(f,g) = 
\langle \frac{\partial f}{\partial x},\frac{\partial g}{\partial
  x}\rangle$
showing that $Q$ has
infinitesimal generator equal to the standard Laplacian on
$S^1$.

\subsection{Telescoping}
There is a standard equivalence relation among Bratteli diagrams which is generated by isomorphisms and so-called telescoping.
Since we are looking at stationary diagrams we consider stationary telescopings only.
Then the following operations generate the equivalence relation we consider:
\begin{enumerate}
\item {\it Telescoping}: 
Given the above data built from a graph $\Gg=(\Vv,\Ee)$, and a positive integer $p$, we consider a new graph $\Gg^p:= (\Vv^p,\Ee^p)$ with the same vertices: $\Vv^p=\Vv$, and the paths of length $p$ as edges: $\Ee^p=\Pi_p(\Gg)$. 
The corresponding parameter is taken to be $\rho_p=\rho^p$.

\item {\it Isomorphism}: Given two graphs as above $\Gg=(\Vv,\Ee)$, $\Gg'=(\Vv',\Ee')$, we say that the corresponding stationary Bratteli diagrams are isomorphic if there are two bijections $\Vv \rightarrow \Vv'$, $\Ee \rightarrow \Ee'$ which intertwine the range and source maps.
We need in this case that the associated parameters be equal, and the sets of horizontal edges isomorphic (through a map which intertwines the range and source maps).
\end{enumerate}
We show now that this equivalence relation leaves the properties of the associated spectral triple invariant: 
\begin{enumerate}[(i)]
\item The zeta functions are equivalent, so have the same spectral dimension: $s_0=\frac{\log\pf}{-\log\rho}$;

\item The spectral measures are both equal to the invariant probability measure $\mu$ on $\Pi_\infty$;

\item Both spectral distances generate the topology of $\Pi_\infty$ (provided $\hat\Hh$ is large enough as in Lemma~\ref{lem-Gtau}), and are  furthermore Lipschitz equivalent.
\end{enumerate}
The invariance under isomorphism is trivial.
We explain briefly how things work under telescoping.
The horizontal edges for $\Gg^p$ are given as for $\Gg$, by the corresponding subset
\[
\hat\HE^p \subseteq \left\{ (\varepsilon,\varepsilon') \in \Ee^p\times\Ee^p \, : \, \varepsilon\neq \varepsilon', \; s(\varepsilon)=s(\varepsilon') \right\} \,,
\]
and so we have the identifications
\[
\Hh_n^p \cong \bigcup_{i=0}^{p-1} \Hh_{np+i}\,,  \qquad E_n^p\cong \bigcup_{i=0}^{p-1} E_{np+i}
\]
which allows us to determine the approximation graph $G_\tau^p=(V^p,E^p)$, and yields a unitary equivalence \(\ell^2(E) \cong \ell^2(E^p)\).
We identify the two Hilbert spaces $\ell^2(E) \cong \ell^2(E^p)$ and the representations $\pi\cong \pi_p$, while the Dirac operators satisfy:
\[
D_p = W^\dagger D W \quad \text{with } \quad
W : \left\{ \begin{array}{ccl}
\ell^2(E) & \rightarrow & \ell^2(E^p) \\
\delta_e & \mapsto & \rho^{-\frac{k}{2}} \delta_e \,, \ \text{\rm for } e\in E_n, \text{\rm with } n= k \mod p\,.
\end{array}
\right.
\]
From the inequalities \( \id \le W \le \rho^{-p} \id\), we deduce that the zeta functions are equivalent, and that both spectral triple have the same spectral dimension $s_0$, and give rise to the same spectral measure $\mu$.
By Theorem~\ref{thm-ST}, both Connes distances generate the topology of $\Pi_\infty=\Pi_\infty^p$, provided $\hat\Hh$ is large enough.
Let us call $d_s^p$ the spectral metric associated with $\Gg^p$, with corresponding coefficients $n^p_{xy},c_{xy}^p, b^p_n$ as in equation~\eqref{eq-ds}.
Writing $n_{xy}=pn_{xy}^p +k_{xy}$, for some $k_{xy}\le p-1$, we have
\[
d_s(x,y) =  c_{xy} \rho^{k_{xy}} (\rho^p)^{n^p_{xy}} + 
\sum_{n> n^p_{xy}} (\rho^p)^n \sum_{k=0}^{p-1} \bigl( b_{np+k} (x) + b_{np+k} (y) \bigr) \rho^k  \,,
\]
Now we see that \(b_{np+k}(z) = 1 \Rightarrow b_n^p(z)=1\), while if \(b_{np+k}(z)=0\) for all $k=0, \cdots p-1$, then $b_n^p(z)=0$ too, so that one has \( b_n^p(z) \le \sum_{k=0}^{p-1} b_{np+k}(z) \le p b_n^p(z)\).
We substitute this back into the previous equation to get the Lipschitz equivalence:
\[
c_p \ d_s^p (x,y) \  \le  d_s(x,y) \le p \rho^p C_p \ d_s^p(x,y) \,,
\]
with $c_p$, $C_p$, the respective min and max of $c_{xy}/c^p_{xy}$ (which only depends on $\Hh$ and $p$).

\section{Substitution tiling spaces}
\label{sec-tilings}
Bratteli diagrams occur naturally in the description of substitution
tilings. The path space of the Bratteli diagram defined by the
substitution graph has been used to describe the 
transversal of such a
tiling \cite{Forrest,Kel95}. As we will first show, an extended version can also be used to
describe a dense set of the continuous hull $\Omega_\Phi$ of the tiling and
therefore we will employ it and the construction of the previous
section to construct a spectral triple for $\Omega_\Phi$. 
We then will have a closer look at the Dirichlet form defined by the
spectral triple. Under the assumption that the dynamical spectrum of
the tiling is purely discrete we can identify a core for the Dirichlet.
We can then also compute explicitely the
associated Laplacian.

\subsection{Preliminaries}
\label{ssec-tilingbasics}
We recall the basic notions of tiling theory, namely
tiles, patches, tilings of the Euclidean space $\RM^d$, and substitutions. 
For a more detailed presentation in particular of substitution tilings
we refer the reader to \cite{Grunbaum}.  
A {\em tile} is a compact subset of $\RM^d$ that is homeomorphic to a
ball. It possibly carries a decoration (for instance its collar).
A {\em tiling} of $\RM^d$ is a countable set of tiles $(t_i)_{i\in I}$
whose union covers $\RM^d$ and with pairwise disjoint interiors.  
Given a tiling $T$, we call a {\em patch} of $T$, any set of tiles in $T$ which covers a bounded and simply connected set. 
A {\em prototile} (resp. {\em protopatch}) is an equivalence class of tiles (resp. patches) modulo translations.
We will only consider tilings with finitely many prototiles and for which there are only finitly many protopatches containing two tiles (such tilings have Finite Local Complexity or FLC).

The tilings we are interested in are constructed from a (finite) prototile set $\Aa$ and a substitution rule on the prototiles. 
A substitution rule is a decomposition rule followed by a rescaling, i.e.\ each prototile is decomposed into smaller tiles, which, when stretched by a common factor $\theta>1$ are congruent to some prototiles. 
We call $\theta$ the  {\em dilation factor} of the substitution.
The decomposition rule can be applied to patches and whole tilings, by simply decomposing each tile, and so can be the substitution rule when the result of the decomposition is stretched 
by a factor of $\theta$. 
We denote the decomposition rule by $\dec$ and the substitution rule by $\Phi$. In particular we have, for a tile $t$, $\dec (t+a) = \dec (t) + a$ and $\Phi (t+a) = \Phi (t) + \theta a$ for all $a \in \RM^d$. 
See Figure~\ref{fig-chair} for an example in $\RM^2$.
\begin{figure}[htp]
\begin{center}
\includegraphics[scale=0.6]{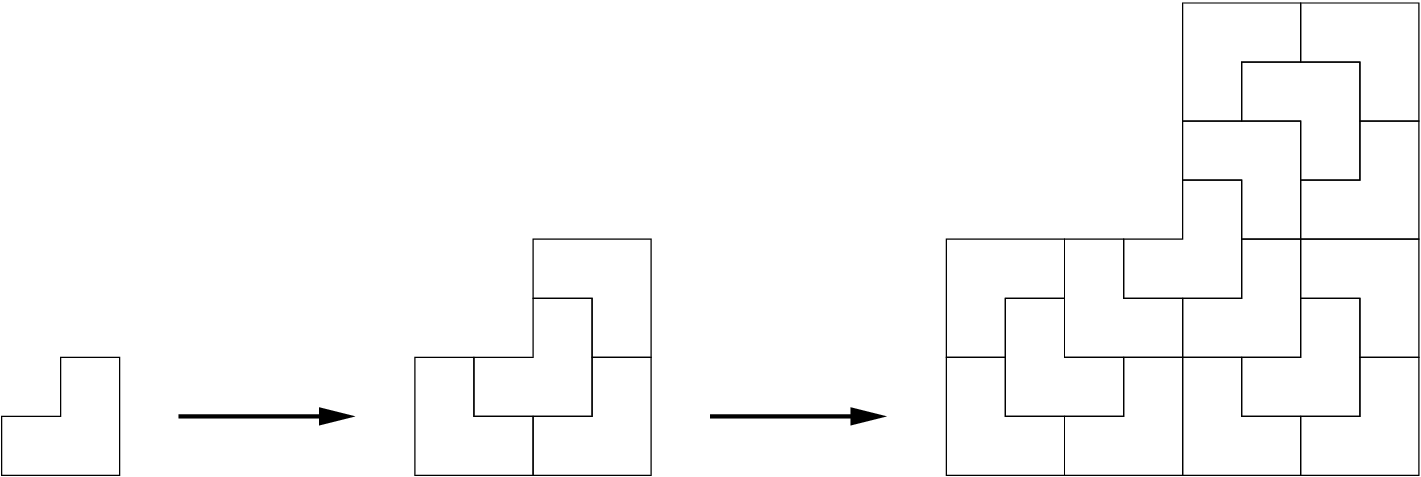}
\caption{{\small A process of inflation and substitution (chair tiling). A whole tiling of~$\RM^2$ can be obtained as a fixed point of this map.}}  
\label{fig-chair}
\end{center}
\end{figure}

A patch of the form $\Phi^n(t)$, for some tile $t$, is called an
{\em $n$-supertile}, or $n$-th order supertile.  
A rescaled tile $\theta^{n} t$ will be called a {\em level $n$ tile} but also, if $n=-m<0$, an {\em $m$-microtile}, or $m$-th order microtile. 

A substitution defines a tiling space $\Omega_\Phi$: the set of all
tilings $T$ with the property that any patch of $T$ occurs in a
supertile of sufficiently high order.  

We will assume that the substitution is {\em primitive} and {\em
  aperiodic}: there exists an integer $n$ such that any $n$-supertile
contains  tiles of each type and all tilings  of $\Omega_\Phi$ are aperiodic. 
This implies that by inspection of a large enough but finite patch
around them the tiles of $\Omega_\Phi$ can be grouped into supertiles
(one says that $\Phi$ is recognizable) so that $\dec$ and $\Phi$
are invertible. 
In particular, $\Phi$ is a homeomorphism of $\Omega_\Phi$ if the
latter is equipped with the standard tiling metric \cite{AP}. 

We may suppose that the substitution {\em forces the border}
\cite{Kel95}.  
The condition says that given any tile $t$, its $n$-th substitute does
not only determine the $n$-supertile $\Phi^n(t)$, but also all tiles
that can be adjacent to it.  
This condition can be realized for instance by considering decorations
of each types of tiles, and replacing $\Aa$ by the larger set of
collared prototiles.  

There is a canonical action of $\RM^d$ on the tiling space
$\Omega_\Phi$, by translation, which makes it a topological
dynamical system.  
Under the above assumptions, the dynamical system $(\Omega_\Phi,
\RM^d)$ is minimal and uniquely ergodic. 
The unique invariant and ergodic probability measure on
$\Omega_\Phi$ will be denoted $\mu$. 

A particularity of tiling dynamical system is that they admit
particular transversals to the $\RM^d$-action.  
To define such a transversal $\Xi$, 
we associate to each prottoile a particular 
point, called its {\em
  puncture}. 
Each level $n$ tile being
similar to a unique proto-tile we may then associate to the level $n$ tile the
puncture which is the image of the puncture of the
proto-tile under the similarity. 
The transversal\footnote{Sometimes $\Xi$ is referred to as the {\em canonical transversal} or the {\em discrete hull}} $\Xi$ is the subset of tilings $T\in\Omega$ which
has the puncture of one of its tiles at the origin of $\RM^d$. 
The measure $\mu$ induces an invariant probability measure on $\Xi$
which gives the frequencies of the tiles
and patches.


\subsection{Substitution graph and the Robinson map}
\label{ssec-robmap}
The {\em substitution matrix} of the substitution $\Phi$ is the
matrix with coefficients $A_{ij}$ equal to the number of tiles of type
$t_i$ in $\Phi(t_j)$.  
The graph $\Gg$ of Section~\ref{sec-ST-Bratteli} underlying our
constructions will be here the {\em substitution graph}: the
graph whose graph matrix is the substitution matrix. 
More precisely, its vertices $v\in\Vv$ are in one-to-one
correspondence with the prototiles and we denote with $t_v$ the
prototile corresponding to $v\in\Vv$, {\it i.e.}\, the prototile set reads \( \Aa = \{ t_v : v \in \Vv\}\).
Between the vertices $u$ and $v$ there are $A_{uv}$ edges
corresponding to the $A_{uv}$ different occurrences of tiles of type
$t_u$ in $\Phi(t_v)$. 
Here we call $u$ (or $t_u$) the source, and $v$ (or $t_v$) the range
of these edges. 
Notice that the Perron-Frobenius eigenvalue of $A$ is the $d$-th power
of the dilation factor $\theta$: \(\pf=\theta^d\). 
The asymptotics of the powers of $A$ are given by
equation~\eqref{eq-An} as before. 
The coordinates of the left and right Perron-Frobenius eigenvectors
$L,R$, 
are now related to
the volumes and the frequencies of the prototiles as follows: for all
$v\in\Vv$ we have
\begin{equation}
\label{eq-LR}
\freq(t_v) = R_v\,,
\qquad 
\vol(t_v) = L_v \,
\end{equation}
where $\freq(t_v)$ is the frequency and $\vol(t_v)$ the volume of
$t_v$, the volume being normalized as in equation~\eqref{eq-normalization} so that the average volume of a
tile is $1$.  

Given a choice of punctures to define the transversal $\Xi$ of
$\Omega_\Phi$ there is a map  
\[
\Rob:\Xi \to \Pi_\infty(\Gg)
\]
onto the set of half-infinite paths in $\Gg$. 
Indeed, given a tiling $T\in\Xi$ (so with a puncture at the origin) and
an integer $n\in \NM$, we define: 
\begin{itemize}
\item $v_n(T)\in \Vv$ to be the vertex corresponding to the prototile
  type of the tile in $\Phi^{-n}(T)$ which contains the origin;  
\item $\varepsilon_n(T)\in \Ee$ to be the edge corresponding to the occurrence
  of $v_{n-1}(T)$ in $\Phi(v_{n}(T))$. 
\end{itemize}
Then $\Rob(T)$ is the sequence $(\varepsilon_n(T))_{n\geq 1}$. 
We call $\Rob$ the Robinson map as it was first defined for the
Penrose tilings by Robinson, see \cite{Grunbaum}.

\begin{theo}[\cite{Kel95}]
\label{thm-homeo}
$\Rob$ is a homeomorphism.
\end{theo}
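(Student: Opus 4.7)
The plan is to show that $\Rob$ is a continuous bijection between two compact metric spaces, whence it is automatically a homeomorphism. The space $\Xi$ is compact (with the tiling metric restricted to it) and $\Pi_\infty(\Gg)$ is compact (in the product/Cantor topology), so it suffices to establish that $\Rob$ is well-defined, continuous, injective, and surjective.

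First I would deal with well-definedness and continuity simultaneously. The key tool is recognizability of $\sigma$, which guarantees that $\sigma^{-n}(T)$ is a well-defined element of $\Omega_\sigma$ for every $T\in\Omega_\sigma$ and every $n$, and moreover that the $n$-supertile of $T$ covering the origin is determined by a finite patch around the origin. This shows both that the sequence $\varepsilon_n(T)$ is well-defined and that for each fixed $n$ the map $T\mapsto (\varepsilon_1(T),\ldots,\varepsilon_n(T))$ is locally constant on $\Xi$: if $T$ and $T'$ agree on a large enough ball around $0$ (the size depending on $n$ and on the recognizability radius), then their $n$-supertiles at the origin coincide, so the first $n$ edges of their Robinson images agree. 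Since cylinder sets form a basis of the topology on $\Pi_\infty(\Gg)$, this yields continuity.

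Injectivity is then a soft consequence of primitivity together with aperiodicity. If $\Rob(T)=\Rob(T')$, then the vertex sequence $(v_n(T))$ agrees with $(v_n(T'))$ and, more importantly, the specific $n$-supertile of $T$ containing the origin equals that of $T'$ as a translated patch. Because both $T$ and $T'$ lie in $\Xi$, the puncture of the origin tile is at $0$ in both, so these patches coincide literally (not just up to translation). Primitivity implies that the diameter of the $n$-supertile at the origin grows like $\theta^n$, so the nested union of these patches covers $\RM^d$ and forces $T=T'$.

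The main obstacle is surjectivity. Given a path $(\varepsilon_n)_{n\geq 1}\in\Pi_\infty(\Gg)$, I would reconstruct a tiling as follows. The edge $\varepsilon_n$ encodes an occurrence of the prototile $t_{v_{n-1}}$ inside $\sigma(t_{v_n})$, and hence an occurrence of the $(n-1)$-supertile $\sigma^{n-1}(t_{v_{n-1}})$ inside the $n$-supertile $\sigma^n(t_{v_n})$. Iterating, one obtains a nested sequence of patches $P_n$, translated so that each contains the origin at the prescribed puncture of the level-$0$ tile. Primitivity ensures that these patches eventually contain arbitrary large balls around $0$, so their union $T := \bigcup_n P_n$ tiles $\RM^d$. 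The assumption that $\sigma$ forces the border is what makes this union a \emph{legal} tiling of $\Omega_\sigma$: every patch of $T$ sits inside some $P_n$ and hence inside a supertile, so $T\in\Omega_\sigma$; moreover border forcing eliminates any ambiguity in how the supertiles at different levels are stitched together. By construction the origin is a puncture of $T$, so $T\in\Xi$, and the construction is visibly inverse to $\Rob$. Finally, $\Rob$ being a continuous bijection between compact Hausdorff spaces is automatically a homeomorphism.
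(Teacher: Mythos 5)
Your overall strategy (continuous bijection between compact Hausdorff spaces) is the standard one — the paper itself gives no proof but cites \cite{Kel95}, where the argument has this shape — and your continuity/well-definedness step via recognizability is fine. The genuine gap is the repeated claim that the nested supertiles around the origin exhaust $\RM^d$. In the injectivity step you argue that since the diameter of the $n$-supertile at the origin grows like $\theta^n$, the union of these patches covers $\RM^d$; this inference is false. The origin can remain at bounded distance from the boundary of \emph{every} $n$-supertile (think of a substitution fixed point whose origin sits at a vertex, or a one-dimensional tiling whose supertiles all grow to the right), in which case the union is only a sector or half-space. Two tilings of $\Xi$ agreeing on that region but differing outside it would then have the same Robinson image, so injectivity cannot follow from primitivity and aperiodicity alone — and indeed the theorem is false without border forcing (uncollared one-dimensional examples like Fibonacci give distinct tilings sharing a half-line and the same path). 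The paper tacitly acknowledges that the "supertiles exhaust $\RM^d$" property is special: in Section 5.3 it is \emph{arranged} for one particular fixed point $T^*$ by passing to a power of the substitution. The correct injectivity argument uses border forcing exactly here: if every occurrence of $\sigma^N(t)$ has a determined collar, then the $m$-supertile at the origin determines the $(m-N)$-supertiles adjacent to it, hence all tiles within distance $c\,\theta^{m-N}$ of it; since the origin lies in that supertile, the path determines $T$ on balls of radius $c\,\theta^{m-N}$ about $0$, and letting $m\to\infty$ gives $T=T'$.

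The same issue infects your surjectivity step: "primitivity ensures that these patches eventually contain arbitrarily large balls around $0$" is unjustified, so $\bigcup_n P_n$ need not tile $\RM^d$, and border forcing does not by itself produce the missing tiles outside that region in the way you describe. Two standard repairs: either (i) for each $n$ choose some $T_n\in\Xi$ whose genuine $n$-supertile at the origin is $P_n$ (take $\sigma^n$ of a tiling containing a tile of type $v_n$ and translate appropriately), note that $\Rob(T_n)$ agrees with the given path on its first $n$ edges, and extract a convergent subsequence in the compact space $\Xi$, using the continuity you already proved to conclude $\Rob(T)$ is the given path; or (ii) use border forcing at each level to extend $P_m$ by its determined collar of $(m-N)$-supertiles, which \emph{does} yield patches containing balls of radius $\sim\theta^{m-N}$ around the origin whose consistent union is a tiling. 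As written, both injectivity and surjectivity rest on the false covering claim, so the proof does not go through without these corrections.
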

We extend the above map $\Rob$ to the continuous hull $\Omega_\Phi$. 
The idea is simple: the definition of $\Rob$ makes sense provided the origin lies in a single tile but becomes ambiguous as soon as it lies in the common boundary of several tiles.
We will therefore always assign that boundary to a unique tile in the following way.

We suppose that the boundaries of the tiles are sufficiently regular so that there exist a vector $\vec v\in\RM^d$ such that for all points $x$ of a tile $t$ either 
\(\exists \eta>0, \forall \epsilon \in(0,\eta): x+\epsilon \vec v \in t\) 
or 
\(\exists \eta>0, \forall\epsilon \in (0,\eta): x+\epsilon \vec v \notin t\). 
This is clearly the case for polyhedral tilings.
We fix such a vector $v$.
Given a prototile $t$ (a closed set) we define the half-open prototile $[t)$ as follows:
\[
[t) :=\big\{ x\in t \, : \, \exists \eta>0\:\forall
\epsilon \in[0,\eta): x+\epsilon \vec v \in t \big\}.
\]
It follows that any tiling $T$ gives rise to a partition of $\RM^d$ by
half-open tiles.
We extend the Robinson map to
\[
\Rob: \Omega_\Phi\to\Pi_{-\infty,+\infty}
\]
where $\Pi_{-\infty,+\infty}$ is the space of bi-infinite sequences over $\Gg$ using half-open proto-tiles as follows.
For $n\in \ZM$ we define
\begin{itemize}
\item  $v_n(T)\in \Vv$ to be the vertex corresponding to the prototile type of the half open tile in $\Phi^{-n}(T)$ which contains the origin.
So $v_n(T)$ corresponds to
 \begin{itemize}
 \item the $n$-th order (half-open) supertile in $T$ containing the origin, for $n> 0$,
 \item $n$-th order (half-open) microtile in $\dec^{-n}(T)$ containing the origin, for $n \le 0$;
\end{itemize}
\item $\varepsilon_n(T)\in \Ee$ to be the edge corresponding to the occurrence of $v_{n-1}(T)$ in $\Phi(v_{n}(T))$.
\end{itemize} 
And we set \(\Rob(T)\) to be the bi-infinite sequence $\Rob(T)=\bigl(\varepsilon_n(T)\bigr)_{n\in\ZM}$. 

\begin{rem}
\label{rem-brat2}{\em
As in Remark~\ref{rem-brat1} we can see this construction as a Bratteli diagram, but the diagram is bi-infinite this time.
There is a copy of $\Vv$ at each level $n\in \ZM$ and edges of $\Ee$ between levels $n$ and $n+1$.
Level $0$ corresponds to prototiles, level $1$ to supertiles and level $n>1$ to $n$-th order supertiles, while level $-1$ corresponds to microtiles and level $n<-1$ to $n$-th order microtiles.
For the ``negative'' part of the diagram, we can alternatively consider the reversed substitution graph $\tilde\Gg=(\Vv,\tilde\Ee)$ which is $\Gg$ with all orientations of the edges reversed. 
The graph matrix of $\tilde{\Gg}$, is then the transpose of the substitution matrix: $\tilde{A}=A^T$.
So for $n\le0$, there are edges of $\tilde \Ee$ between levels $n$ and $n-1$: there are $\tilde{A}_{uv}=A_{vu}$ such edges linking $u$ to $v$.
}
\end{rem}

As for Theorem~\ref{thm-homeo} one proves, using the border forcing condition, that $\Rob$ is injective. 

Given a path $\xi \in \Pi_{-\infty,+\infty}$, and $m < n \in \ZM\cup\{\pm \infty\}$, we denote by
\(\xi_{[m,n]}\),\(\xi_{(m,n]}\),\(\xi_{[m,n)}\) and \(\xi_{(m,n)}\), its restrictions from level $m$ to $n$ (with end points included or not).
Also $\xi_n$ will denote its $n$-th edge, from level $n$ to level $n+1$, $n\in\ZM$.
We similarly define $\Pi_{m,n}$ (with end points included).
For instance $\Pi_{0,+\infty}$ is simply $\Pi_\infty=\Pi_\infty(\Gg)$.

We say that an edge $e\in\Ee$ is {\em inner} if it encodes the position of a tile $t$ in the supertile $p$ such that
\(\exists\eta>0, \forall\epsilon\in[0,\eta):t+\epsilon \vec v\in p\).
This says that the occurence of $t$ in $p$ does not intersect the open part of the border of $p$.

It is not true that $\Rob$ is bijective but we have the following.
\begin{lemma}
\label{inedges}
$X:=\mbox{\rm im} \Rob$ contains the set of paths $\xi\in\Pi_{-\infty,+\infty}$ such that $\xi_{(-\infty,0]}$ has 
infinitely many inner edges.
\end{lemma}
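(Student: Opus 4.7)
Given $\xi=(\xi_n)_{n\in\ZM}\in\Pi_{-\infty,+\infty}$ whose negative part $\xi_{(-\infty,0]}$ contains infinitely many inner edges, the plan is to produce a tiling $T\in\Omega_\sigma$ with $\Rob(T)=\xi$. First, I will use the positive part of $\xi$ together with Theorem~\ref{thm-homeo} to pin down a tiling $T_0\in\Xi$. Then I will use the negative part of $\xi$ to read off a point $x$ inside the level-$0$ tile of $T_0$, and finally set $T:=T_0-x$. The infinitely-many-inner-edges hypothesis will be used to guarantee that $x$ sits in the \emph{half-open} part of every microtile around it---without which the Robinson map of $T$ could fail to recover $\xi$ at sufficiently deep negative levels.

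\textbf{Nested microtiles.} Let $T_0\in\Xi$ be the unique tiling with $\Rob(T_0)=\xi_{[1,+\infty)}$ and, for $n\ge 0$, let $P_n$ denote the level-$n$ supertile of $T_0$ around the origin (of prototile type $v_n:=r(\xi_n)$, with $P_0$ the level-$0$ tile whose puncture sits at $0$). Setting $m_0:=P_0$, I recursively define $m_{n-1}$ for $n\le 0$ to be the microtile in the decomposition $\dec(m_n)$ of type $v_{n-1}=s(\xi_n)$ whose position inside $m_n$ is the one prescribed by $\xi_n$. The $m_n$ form a nested family of compact subsets of $P_0$ with $\diam(m_n)=O(\theta^{n})\to 0$, so their intersection $\{x\}=\bigcap_{n\le 0}m_n$ is a single point.

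\textbf{Half-open refinement.} The key step, where the inner-edge hypothesis enters, is the claim that $x\in[m_N)$ for every $N\le 0$. Given $N$, by hypothesis there is some $n\le N$ for which $\xi_n$ is inner, which by definition means $m_{n-1}+\epsilon\vec v\subset m_n$ for all sufficiently small $\epsilon>0$. Since $x\in m_{n-1}$, this yields $x+\epsilon\vec v\in m_n\subset m_{n+1}\subset\cdots\subset m_N$ for small $\epsilon>0$, so $x\in[m_N)$. In particular $x\in[P_0)$, and because the half-open structure is monotone under tile inclusions ($y\in[t)$ and $t\subset S$ imply $y\in[S)$), one also gets $x\in[P_n)$ for every $n\ge 0$.

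\textbf{Verification.} With $T:=T_0-x$, the origin of $T$ corresponds to the point $x$ in $T_0$-coordinates. For $n\ge 1$ the level-$n$ half-open supertile of $T$ around $0$ is $[P_n)-x$, of prototile type $v_n$, with inclusion edge $\xi_n$ inherited from $T_0$. For $n\le 0$ the half-open microtile in $\dec^{-n}(T)$ around $0$ is $[m_n)-x$, of type $v_n$, with inclusion edge $\xi_n$ by construction of the $m_n$. Hence $\Rob(T)=\xi$, proving $\xi\in\mbox{\rm im}\,\Rob$. The main obstacle is the third step: a nested sequence of microtiles always intersects in a single point, but without the inner-edge hypothesis that point could in principle land on the ``missing'' face of some $[m_N)$, and the half-open microtile of $T$ around $0$ at level $|N|$ would then disagree with $[m_N)-x$; the infinitely-many-inner-edges hypothesis is precisely what provides, arbitrarily deep, a geometric padding in the direction $\vec v$ pushing $x$ safely into $[m_N)$ at every level.
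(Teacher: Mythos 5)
Your proof is correct and follows essentially the same route as the paper: both identify the unique point in the nested closed microtiles encoded by $\xi_{(-\infty,0]}$ and use the infinitely many inner edges (plus the monotonicity of the half-open structure under inclusion) to place that point inside every half-open microtile, the paper phrasing this as the containment of $\bigcap_{n\,\mathrm{inner}}[s(\xi_n)]$ in $\bigcap_{n}[s(\xi_{n+1}))$. Your write-up merely makes explicit the final step of assembling $T=T_0-x$ from the positive part of $\xi$ via Theorem~\ref{thm-homeo}, which the paper leaves implicit.
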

\begin{proof}
Recall the following: If $(A_n)_n$ is a sequence of subsets of $\RM^d$
such that $A_{n+1}\subset A_{n}$ and $\mbox{diam}(A_n)\to 0$ then there
exists a unique point $x\in\RM^d$ such that $x\in \bigcap_n \overline{A_n}$. 

By construction, for any tiling $T$, one has $0\in \bigcap_{n\leq 0}[v_n(T))$.
Hence $\xi=\Rob(T)$ whenever $\bigcap_{n\leq 0}[s(\xi_n))\neq\emptyset$,
where $[s(\xi_0))$ is the standard representative for the half-open prototile of type $s(\xi_0)$ and $[s(\xi_n))$  the half-open $n$-th order microtile of type $s(\xi_n)$ in $[s(\xi_0))$ which is encoded by the path $\xi_{[n,0]}$. 

Suppose that $\xi_{n}$ is inner, then $[s(\xi_{n}))$ does not lie at the open border of $[r(\xi_n)=s(\xi_{n+1}))$.
Hence 
\([s(\xi_{n}))\cap [s(\xi_{n+1}))= [s(\xi_{n})]\cap [s(\xi_{n+1}))\)
where $[s(\xi_{n})]$ is the closure of $[s(\xi_n))$.
Suppose that infinitely many edges of $\xi_{(-\infty,0]}$ are inner, then 
\[
\bigcap_{n<0:   \xi_{n}\mbox{ \small inner}} [s(\xi_{n})]
\subset 
\bigcap_{n < 0}[s(\xi_{n+1}))
\]
showing that the r.h.s.\ contains an element, and hence $\xi\in\mbox{\rm im}\,\Rob$.
\end{proof}

\begin{coro}
The set $X$ is a dense and shift invariant subset of $\Pi_{-\infty,+\infty}$.
\end{coro}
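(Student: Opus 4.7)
I would split the proof into the two claims: density and shift-invariance, treating density via Lemma~\ref{inedges} and shift-invariance by establishing the intertwining relation $\Rob \circ \sigma = S \circ \Rob$ with the shift $S$ on $\Pi_{-\infty,+\infty}$. Neither claim should require a delicate argument once the bookkeeping is set up correctly; the main subtlety is density, which rests on producing, for any finite cylinder, an extension with infinitely many inner edges in its negative tail.

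\textbf{Density.} It suffices to show $X \cap [\gamma] \neq \emptyset$ for every cylinder $[\gamma]$ based on a finite window $\gamma \in \Pi_{m,n}$, $m<0<n$. Such a $\gamma$ encodes a chain of nested (half-open) tiles: a level-$n$ supertile of type $r(\gamma_{n-1})$, containing successively smaller supertiles and tiles down to a level-$m$ microtile $M$ at the origin. I would construct a witness in two equivalent ways. First, using Lemma~\ref{inedges}: the forward extension of $\gamma$ to $+\infty$ may be chosen arbitrarily in $\Gg$, while the backward extension must have infinitely many inner edges. Primitivity of $A$ ensures that for each prototile $v$, some high power $\sigma^k(t_v)$ contains a tile strictly in its interior for every required type, so inner edges ending at $v$ exist at every level, and the backward concatenation goes through. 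Alternatively, and perhaps cleaner, one may construct a tiling $T \in \Omega_\sigma$ directly: by primitivity, a level-$n$ supertile of the prescribed type appears in every element of $\Omega_\sigma$; inside it lies the nested chain prescribed by $\gamma$; translating the tiling so that a point of the half-open microtile $M$ lands at the origin produces $T$ with $\Rob(T) \in [\gamma] \cap X$.

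\textbf{Shift invariance.} I would verify directly from the definitions that $\Rob(\sigma(T)) = S\,\Rob(T)$, where $S$ denotes the bi-infinite shift $(S\xi)_n = \xi_{n-1}$. Indeed, $\sigma^{-n}(\sigma(T)) = \sigma^{-(n-1)}(T)$, so the (half-open) tile containing the origin in $\sigma^{-n}(\sigma(T))$ coincides with that in $\sigma^{-(n-1)}(T)$, giving $v_n(\sigma(T)) = v_{n-1}(T)$ and consequently $\varepsilon_n(\sigma(T)) = \varepsilon_{n-1}(T)$. Since $\sigma$ is a homeomorphism of $\Omega_\sigma$ by recognizability (hence in particular bijective), it follows that
\[
 S X = S\,\Rob(\Omega_\sigma) = \Rob(\sigma(\Omega_\sigma)) = \Rob(\Omega_\sigma) = X,
\]
and likewise $S^{-1} X = X$, proving shift invariance.

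\textbf{Main obstacle.} The nontrivial step is to guarantee enough inner edges in the backward extension, or equivalently, that the prescribed origin can be placed strictly inside the level-$m$ microtile prescribed by $\gamma$. This is where the regularity of tile boundaries via the reference vector $\vec v$ and the half-open tile convention become essential: they ensure that the origin sits inside a well-defined chain of half-open microtiles, and that primitivity of $A$ can be converted into the existence of inner edges at every vertex after finitely many iterations. Everything else is essentially bookkeeping.
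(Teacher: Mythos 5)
Your proposal is correct and follows essentially the same route as the paper, whose entire proof is that shift invariance is "clear" (exactly your conjugacy $\Rob\circ\sigma=S\circ\Rob$ together with bijectivity of $\sigma$ from recognizability) and that density "follows immediately" from Lemma~\ref{inedges}; your direct construction of a tiling realizing any prescribed cylinder is in fact the cleanest way to make that "immediately" precise. One caveat: the step "some power $\sigma^k(t_v)$ contains a tile strictly in its interior, so inner edges ending at $v$ exist" conflates the single-substitution-step notion of inner edge with interiority in a $k$-th order supertile (a tile strictly inside $\sigma^k(t_v)$ may still meet the open border of its immediate parent supertile), so the backward-extension argument is not fully justified as written — but your alternative argument, which places the origin at a point of the half-open microtile $[M)$ and uses $[M)\subset[t_k)$ for all higher levels of the nested chain, bypasses this and suffices for density.
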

\begin{proof}
Shift invariance is clear.
Denseness follows immediately from Lemma~\ref{inedges}.
\end{proof}

In particular, for $n\in\NM$, each element of $\Pi_{-n,n}$ can be the middle part of a sequence in
$\Rob(\Omega_\Phi)$, that is, for all $\gamma\in \Pi_{-n,n}$ there exists $T\in\Omega_\Phi$ such that $\Rob(T)_{[-n,n]} = \gamma$.
\begin{rem}
\label{rem-Robinson}{\em
For $v\in\Vv$, let $\Pi_{-\infty,\infty}^v$ be the set of bi-infinite paths which pass through $v$ at level $0$, and set $X^v = X\cap \Pi_{-\infty,\infty}^v$. 
Then $\Rr$ yields a bijection between $\Xi_{t_v}\times [t_v)$ and $X^v$, where $t_v$ is the prototile corresponding to $v$ and $\Xi_{t_v}$ its acceptance domain (the set of all tilings in $\Xi$ which have $t_v$ at the origin).

Notice that $\Pi_{-\infty,0}$ can be identified with $\Pi_\infty(\tilde\Gg)$, where $\tilde\Gg$ is the graph obtained from $\Gg$ by reversing the orientation of its edges: one simply reads paths backwards, so follows the edges along their opposite orientations.
One sees then, that the Robinson map yields a homeomorphism between
$\Xi_{t_v}$ and $\Pi_{0,+\infty}^v=\Pi_\infty^v$, and a map with dense
image from $[t_v)$ into $\Pi_{-\infty,0}^v=\Pi_\infty^v(\tilde \Gg)$.
}
\end{rem}

\subsection{The transversal triple for a substitution tiling}
\label{ssec-trST}
Our aim here is to construct a spectral triple for the transversal $\Xi$.
We apply the general construction of Section~\ref{sec-ST-Bratteli} to the substitution graph $\Gg=(\Vv,\Ee)$. We may suppose\footnote{This can always be achieved by going over to a power of the substitution.} that the substitution has a fixed point $T^*$ such that $\RM^d$ is covered by the union over $n$ of the $n$-th order supertiles of $T^*$ containing the origin. 
It follows that $\Rob(T^*)$ is a constant path in $\Pi_{-\infty,+\infty}(\Gg)$, that is, the infinite repetition of a loop edge of $\Gg$ which we choose to be $\varepsilon^*$.
We then fix $\tau$, take $\rho=\rtr$ as a parameter, and choose a subset
\[
\hat\Hh_{tr}\subset \Hh(\Gg)= \left\{ (\varepsilon,\varepsilon') \in \Ee\times\Ee \, : \, \varepsilon\neq \varepsilon', \; s(\varepsilon)=s(\varepsilon') \right\}\,
\]
which we suppose to satisfy the conditions of  Lemma~\ref{lem-Gtau}: if $s(\varepsilon) = s(\varepsilon')$ there is a path of edges in $\hat\Hh_{tr}$ linking $\varepsilon'$ with $\varepsilon'$.
The horizontal edges of level $n\in\NM$ are then given by
\[
\Hh_{{tr},n}= \Bigr\{ (\eta \varepsilon,\eta \varepsilon') \, : \, \eta \in \Pi_{n-1}(\Gg), (\varepsilon,\varepsilon')\in \hat\Hh_{tr}
\Bigr\} \subset \Pi_n(\Gg) \times \Pi_n(\Gg) \,.
\]
They define the transverse approximation graph \(G_{tr,\tau}=(V_{tr},E_{tr})\) as in Section~\ref{sec-ST-Bratteli}
\begin{eqnarray*}
V_{tr} = \bigcup_n V_{tr,n}\,, & V_{tr,n} = \tau (\Pi_n(\Gg)) \subset \Pi_\infty^\ast(\Gg)\,,\\
E_{tr} = \bigcup_n E_{tr,n}\,, & E_{tr,n} = \tau\times\tau(\Hh_{tr,n})\,,
\end{eqnarray*}
together with the orientation inherited from $\hat\Hh_{tr}$: so \(E_{tr,n} =
E_{tr,n}^+ \cup E_{tr,n}^-\) for all $n\in\NM$, and
\(E_{tr}=E_{tr}^+\cup E_{tr}^-\). We also write $E_n(h) =  \tau\times\tau(\Hh_{{tr},n}(h))$ where, if $h=(\varepsilon,\varepsilon')$, then $\Hh_{{tr},n}(h) = \{(\eta\varepsilon,\eta\varepsilon'):\eta\in\Pi_{n-1}(\Gg)\}$.
By our assumption on $\hat\Hh$ the
approximation graph $G_{tr,\tau}$ is connected, and its vertices are
dense in $\Pi_\infty(\Gg)$.

An edge $h\in \hat\Hh_{tr}$ has the following interpretation:
The two paths $\tau(s(h))$ and $\tau(r(h))$ have the same source vertex, say $v_0$, they differ on their first edge and then, at some minimal $n_h>0$, they come back together coinciding for all further edges. This is a consequence of the property of $\tau$. Let us denote the vertex at which the two edges come back together with $v_h$. Neglecting the part after that vertex we obtain a pair $(\gamma,\gamma')$ of paths of length $n_h$ which both start at $v_0$ and end at $v_h$.
Reading the definition of the Robinson map $\Rob$ backwards we see that the pair $(\gamma,\gamma')$
describes a pair of tiles $(t,t')$ of type $v_0$ in an $n_h$-th order supertile of type $v_h$.
Of importance below will be the vector  $r_h\in \R^d$ of translation from $t$ to $t'$.
  
The interpretation of an edge $e\in E_{tr,n}(h)$ (so an edge of type $h$) is similar, except that the paths $\tau(s(e))$ and $\tau(r(e))$  coincide up to level $n$ and meet again at level $n+n_h$. In particular $e$ describes a pair of $n$-th order supertiles $(t,t')$ of type $v_0$ in an $(n+n_h)$-th order supertile of type $v_h$. 
If one denotes by $r_e\in \RM^d$ the translation vector between $t$ and $t'$ then, due to the selfsimilarity, one has:
\begin{equation}
\label{eq-transtr}
r_e= \theta^{n} r_h.
\end{equation}
See Figure~\ref{fig-chair3} for an illustration.
\begin{figure}[htp]
\begin{center}
\includegraphics[scale=0.35]{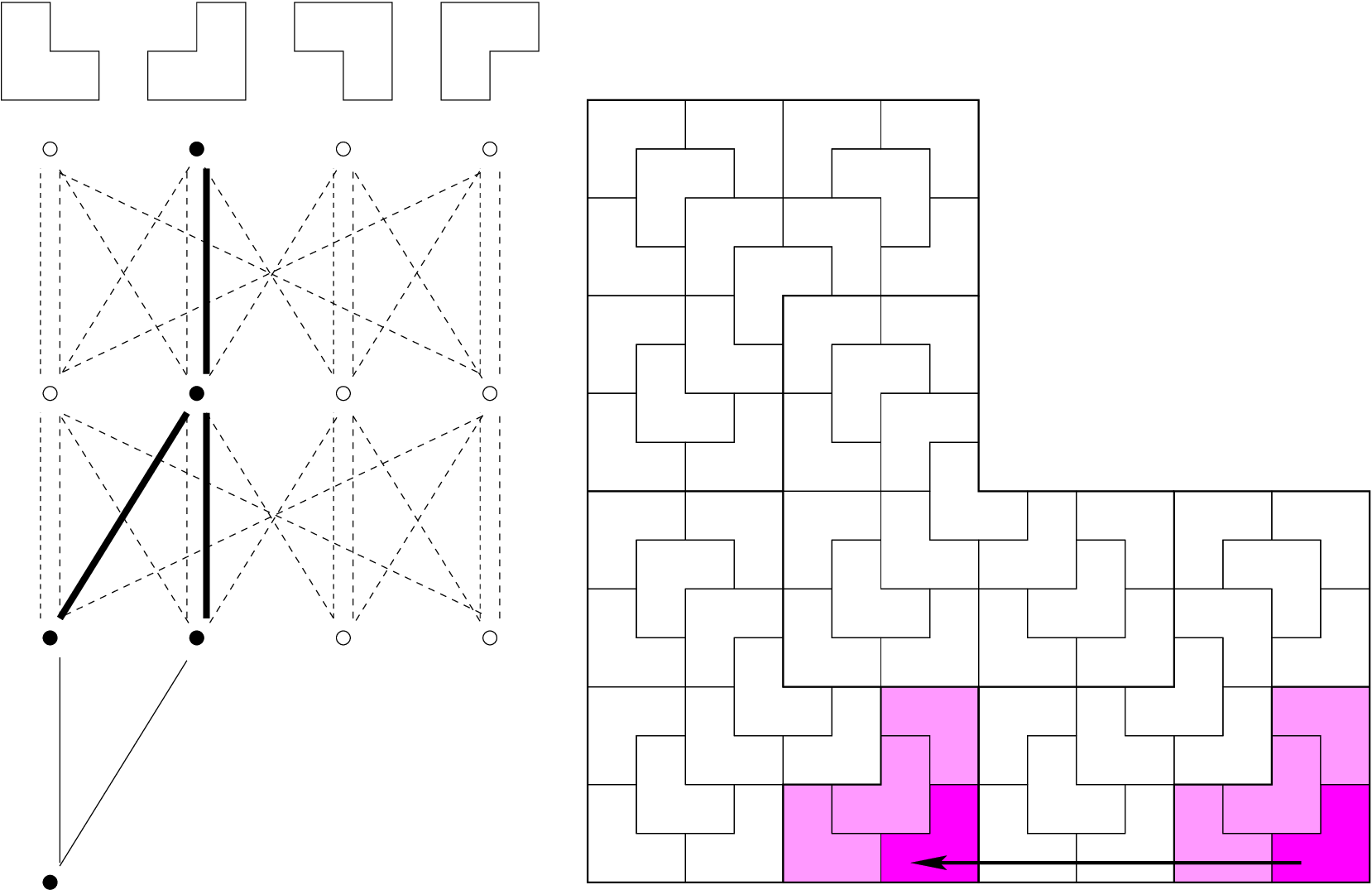}
\caption{{\small A doubly pointed pattern associated with a horizontal arrow $e\in E_{tr,3}(h)$.
The arrow represents the vector $r_e$.
Here $n=2$ (the paths have lengths $2$), and $n_h=1$ (the paths join further down at level $n+n_h=3$).}} 
\label{fig-chair3}
\end{center}
\end{figure}

Theorem~\ref{thm-ST} provides us with a spectral triple for the algebra $C(\Pi_\infty(\Gg))$.
We adapt this slightly to get a spectral triple for $C(\Xi)$.
Since the $n$-th order supertiles of $T^*$ on $0$ eventually cover $\RM^d$,  
$\Rob$ identifies $\Pi_\infty^*(\Gg)$ with the translates of $T^*$
which belong to $\Xi$. We may thus consider the spectral triple
$(C(\Xi),\HS_{tr},D_{tr})$ (which depends on $\rtr$ and the choices
for $\tau$ and $\HE$)) with representation and Dirac operator defined
as in equations~\eqref{eq-rep} and~\eqref{eq-Dirac} by: 
\[
\HS_{tr}=\ell^2(E_{tr}) \,,\quad 
\pi_{tr}(f)\phi(e) = f\bigl( \Rob^{-1}(s(e)) \bigr)\phi(e)\,, \quad 
D_{tr} \,\phi(e) = \rtr^{-n} \phi(e^\op)\,, \  e\in E_{tr,n}\,.
\]
We call it the {\em transverse spectral triple} of the substitution tiling.
By Theorem~\ref{thm-ST} it is an even spectral triple with grading $\chi$ (which flips the orientation).
Also, since $\Hh_{tr}$ satisfies the hypothesis of Lemma~\ref{lem-Gtau} as noted above, the Connes distance induces the topology of $\Xi$.
By Theorems~\ref{thm-zeta} and~\ref{thm-specmeas} the transversal spectral triple has metric dimension $s_{tr}=\frac{d\log(\theta)}{-\log(\rtr)}$, and its spectral measure is the unique ergodic measure on $\Xi$ which is invariant under the tiling groupoid action. 

For $v\in\Vv$, we will also consider the spectral triple $(C(\Xi_{t_v}), \HS_{tr}^v, D_{tr})$ for \(\Xi_{t_v}=\Rob^{-1} \bigl(\Pi_\infty^v(\Gg) \bigr)\): the acceptance domain of $t_v$ (see Remark~\ref{rem-Robinson}).
We call it the {\em transverse spectral triple for the prototile $t_v$}.
It is obtained from the transverse spectral triple by restriction to the Hilbert space $\HS_{tr}^v =\ell^2(E_{tr}^v)$ where $ E_{tr}^v$ are the horizontal edges between paths which start on $v$.
This restriction has the effect that
\[ 
\zeta_v = R_v \zeta + reg.\,,
\]
that is, up to a perturbation which is
regular at $s_{tr}$, the new zeta function is
$R_v=\freq(t_v)$ times the old one.
It hence has the same abscissa of convergene, $s_{tr}^v=s_{tr}$, 
but its residue at $s_{tr}$ is $\freq(t_v)$ times the old one.   

Like for $\Xi$ the Connes distance induces the topology of
$\Xi_{t_v}$. Finally the spectral measure $\mu_{tr}^v$ is the
restriction to $\Xi_{t_v}$ of the invariant measure on $\Xi$, normalized so that the total measure of $\Xi_{t_v}$ is $1$.
The spectral triple for $\Xi$ is actually the direct sum over
$v\in\Vv$ of the spectral triples for $\Xi_{t_v}$. 

We are particularily interested in the quadratic form defined formally by 
\begin{multline}
Q_{tr}(f,g) = \Tt_{tr}([D_{tr},\pi_{tr}(f)]^*[D_{tr},\pi_{tr}(g)]) \\= 
\lim_{s\downarrow s_{tr}} 
\frac{\TR_{\HS_{tr}}\bigl(|D_{tr}|^{-s} [D_{tr},\pi_{tr}(f)]^*[D_{tr},\pi_{tr}(g)]\bigr)}{\zeta_{tr}(s)}\,.
\end{multline}
We emphasize that this expression has yet little meaning, as we have not yet specified a domain for this form. 
For example, while strongly pattern equivariant functions \cite{Kellendonk-PatEquiv} are dense they do not form an interesting core, as $Q_{tr}$ vanishes on such functions (see the paragraph on the transversal form in Section~\ref{ssec-DirForm}).

\subsection{The longitudinal triple for a substitution tiling}
\label{ssec-lgST}
We now aim at constructing what we call the longitudinal spectral triple for the substitution tiling which is based on the reversed substitution graph $\tilde\Gg=(\Vv,\tilde\Ee)$ ($\Gg$ with all orientations of the edges reversed, so with adjacency matrix $\tilde{A}=A^T$).
Set ${\tilde \varepsilon}^* = \varepsilon^*$ and choose $\tilde\tau$.
We take $\rho=\rlg$ as a parameter, and choose a subset
\begin{eqnarray*}
\hat \Hh_{lg}\subset \Hh(\tilde\Gg) & = & \left\{ (\tilde \varepsilon,\tilde \varepsilon') \in \tilde\Ee\times\tilde\Ee \, : \, \tilde \varepsilon\neq \tilde \varepsilon', \; s(\tilde \varepsilon)=s(\tilde \varepsilon') \right\} \\
& = & \left\{ (\varepsilon,\varepsilon') \in \Ee\times\Ee \, : \, \varepsilon\neq \varepsilon', \; r(\varepsilon')=r(\varepsilon') \right\}\,
\end{eqnarray*}
again satisfying the condition of  Lemma~\ref{lem-Gtau}. 
We denote the horizontal edges of level $n\in\NM$ by
\[
\Hh_{{lg},n}= \Bigr\{ (\eta \tilde \varepsilon,\eta \tilde \varepsilon') \, : \, \eta \in \Pi_{n-1}(\tilde\Gg), (\tilde \varepsilon,\tilde \varepsilon')\in \hat \Hh_{lg}
\Bigr\} \subset \Pi_n(\tilde\Gg) \times \Pi_n(\tilde\Gg) \,,
\]
and define the longitudinal approximation graph \(G_{lg,\tau}=(V_{lg},E_{lg})\) as in Section~\ref{sec-ST-Bratteli} by
\begin{eqnarray*}
V_{lg} = \bigcup_n V_{lg,n}\,, & V_{lg,n} = \tau (\Pi_n(\tilde\Gg)) \subset \Pi_\infty^\ast(\tilde\Gg)\,,\\
E_{lg} = \bigcup_n E_{lg,n}\,, & E_{lg,n} = \tau\times\tau(\Hh_{{lg},n})\,,
\end{eqnarray*}
together with the orientation inherited from $\hat \Hh_{lg}$: so \(E_{lg,n} = E_{lg,n}^{+} \cup E_{lg,n}^{-}\) for all $n\in\NM$, and \(E_{lg}=E_{lg}^{+}\cup E_{lg}^-\).

With these choices made, Theorem~\ref{thm-ST} provides us with a spectral triple for the algebra $C(\Pi_\infty(\tilde\Gg))$.

A longitudinal horizontal edge $h\in\hat \Hh_{lg}$ 
has the following interpretation:
As for the transversal horizontal edges, $\tau(s(h))$ and $\tau(r(h))$ start on a common vertex $v_0$, differ on their first edge and then come back to finish equally. To obtain their interpretation it is more useful, however, to reverse their orientation as this is the way the Robinson map $\Rob$ was defined. Then $h=(\tilde\varepsilon,\tilde\varepsilon')$ with $r(\tilde\varepsilon) = r(\tilde\varepsilon')$ determines a pair of microtiles $(t,t')$ of type $s(\tilde\varepsilon)$ and $s(\tilde\varepsilon')$, respectively, in a tile of type $r(\varepsilon)$.
The remaining part of the double path  $(\tau(\tilde \varepsilon),\tau(\tilde \varepsilon'))$ serves to fix a point in the two microtiles. Of importance is now the vector of translation $a_h$ between the two points of the microtiles. 

Similarily, an edge in $E_{lg,n}$ will describe a pair of $(n+1)$-th order microtiles in an $n$-th order microtile. 
By selfsimilarity again, the corresponding translation vector $a_e\in \RM^d$ between the two $(n+1)$-th order microtiles will satisfy
\begin{equation}
\label{eq-translg}
a_e= \theta^{-n} a_h\,.
\end{equation}
if $e\in E_{lg,n}(h)$.
See Figure~\ref{fig-chair4} for an illustration.
\begin{figure}[htp]
\begin{center}
\includegraphics[scale=0.45]{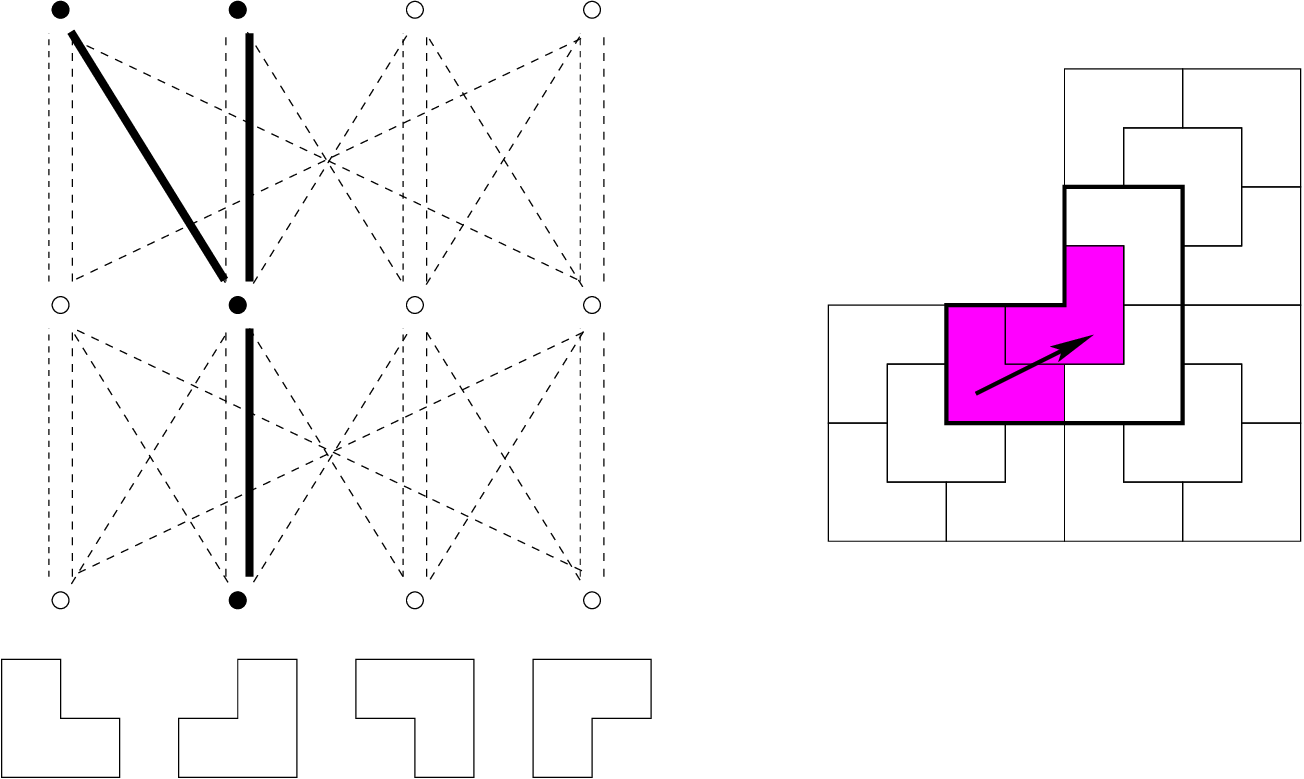}
\caption{{\small A microtile pattern associated with a horizontal arrow $e\in E_{lg,2}(h)$ (the pattern shown has the size of a single tile).
The arrow represents the vector $r_e$.}} 
\label{fig-chair4}
\end{center}
\end{figure}

Remember from Remark~\ref{rem-Robinson}, that we can identify $\Pi_\infty(\tilde\Gg) = \Pi_{-\infty,0}(\Gg)$.
And the inverse of the Robinson map $\Rob$ also induces a dense map $\Pi_{-\infty,0}^v \to t_v$ which is one-to-one on the pre-image of $\Pi_{-\infty,0}^*(\Gg)$; we still denote this map by $\Rob^{-1}$.
Hence the approximation graph for $\Pi_{-\infty,0}^v$ is also an approximation graph for $t_v$. 
Let $E_{lg}^v$ denote the set of edges whose corresponding paths pass through $v$ at level $0$.
We may thus adapt the above spectral triple to get the spectral triple $(C(t_v),\HS_{lg}^v,D_{lg})$ (which depends on $\rlg$) with representation and Dirac defined as in equations~\eqref{eq-rep} and~\eqref{eq-Dirac} by:
\[
\HS_{lg}^v=\ell^2(E_{lg}^v) \,,\quad 
\pi_{lg}(f)\phi(e) = f\bigl( \Rob^{-1}(s(e)) \bigr)\phi(e)\,, \quad 
D_{lg} \,\phi(e) = \rlg^{-n} \phi(e^\op)\,, \  e\in E_{lg,n}^v\,.
\]
That the bounded commutator axiom is satisfied follows from the following Lemma and the fact that H\"older continuous functions are dense in $C(t_v)$.
\begin{lemma}
If $f\in C(t_v)$ is H\"older continuous (w.r.t.\ the Euclidean metric $d$) with exponent $\alpha = \frac{-\log(\rlg)}{\log(\theta)}$, then $[D_{lg},\pi_{lg}(f)]$ is bounded. 
\end{lemma}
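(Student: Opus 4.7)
The plan is to compute the commutator explicitly on the basis $\{\delta_e\}_{e\in E_{lg}^v}$, reduce boundedness to a uniform estimate on the differences $f(\Rob^{-1}(r(e))) - f(\Rob^{-1}(s(e)))$, and then use the geometric interpretation of a longitudinal horizontal edge together with H\"older continuity to carry out the estimate. The balance between the exponent $\alpha$ and the Dirac scaling $\rlg^{-n}$ will give precisely the stated critical value.

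\textbf{Step 1 (Explicit form of the commutator).} A direct calculation starting from equations~\eqref{eq-rep} and~\eqref{eq-Dirac} specialized to the longitudinal data shows that for $\phi\in\HS_{lg}^v$ and $e\in E_{lg,n}^v$,
\[
[D_{lg},\pi_{lg}(f)]\phi(e)=\rlg^{-n}\bigl(f(\Rob^{-1}(r(e)))-f(\Rob^{-1}(s(e)))\bigr)\phi(e^{\op}).
\]
Since $[D_{lg},\pi_{lg}(f)]$ permutes the basis $\{\delta_e\}$ (up to a scalar) by $e\mapsto e^{\op}$, its operator norm equals
\[
\|[D_{lg},\pi_{lg}(f)]\|=\sup_{n\ge 1}\sup_{e\in E_{lg,n}^v}\rlg^{-n}\,\bigl|f(\Rob^{-1}(r(e)))-f(\Rob^{-1}(s(e)))\bigr|.
\]
So it suffices to bound this quantity uniformly in $n$ and $e$.

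\textbf{Step 2 (Geometric distance estimate).} If $e\in E_{lg,n}(h)$ for some $h\in\Hh_{lg}$, then by construction $s(e)$ and $r(e)$ both describe the same pair of $(n+1)$-th order microtiles sitting inside a common $n$-th order microtile in $t_v$, differing by the translation vector $a_e=\theta^{-n}a_h$ of equation~\eqref{eq-translg}. Since $\tau$ identifies the tails of the two paths, the points $\Rob^{-1}(s(e))$ and $\Rob^{-1}(r(e))$ occupy congruent positions inside their respective microtiles; any residual contribution is controlled by the diameter of an $(n+1)$-th order microtile, which is at most a constant times $\theta^{-n}$. Hence there is a constant $C_{\Hh_{lg}}>0$, depending only on $\Hh_{lg}$ and on the prototile set, such that
\[
d\bigl(\Rob^{-1}(s(e)),\Rob^{-1}(r(e))\bigr)\le C_{\Hh_{lg}}\,\theta^{-n}\qquad\forall\, e\in E_{lg,n}^v.
\]

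\textbf{Step 3 (Conclusion via H\"older continuity and the choice of $\alpha$).} Let $K$ be the H\"older constant of $f$, so that $|f(x)-f(y)|\le K\,d(x,y)^{\alpha}$. Combining with Step~2,
\[
\rlg^{-n}\,\bigl|f(\Rob^{-1}(r(e)))-f(\Rob^{-1}(s(e)))\bigr|\le K\,C_{\Hh_{lg}}^{\alpha}\,\bigl(\rlg\,\theta^{\alpha}\bigr)^{-n}.
\]
The critical value $\alpha=-\log\rlg/\log\theta$ is exactly the one for which $\rlg\,\theta^{\alpha}=1$, so the right hand side is bounded by $K\,C_{\Hh_{lg}}^{\alpha}$ uniformly in $n$ and $e$. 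Plugging back into Step~1 yields $\|[D_{lg},\pi_{lg}(f)]\|\le K\,C_{\Hh_{lg}}^{\alpha}<\infty$.

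The only genuinely non-trivial part is Step~2: one must argue carefully that the tail-matching produced by $\tau$, together with the self-similar scaling~\eqref{eq-translg} and the finite number of horizontal types $h\in\Hh_{lg}$, really do yield a uniform bound of order $\theta^{-n}$ on the Euclidean distance between the two Robinson points. Once this geometric input is in place, Steps~1 and~3 are immediate and the critical exponent $\alpha=-\log\rlg/\log\theta$ emerges as the precise exponent balancing the dilation $\theta$ with the Dirac scale $\rlg$.
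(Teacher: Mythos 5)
Your proposal is correct and follows essentially the same route as the paper: express the operator norm as $\sup_n\sup_{e\in E_{lg,n}}\rlg^{-n}|f(\Rob^{-1}(r(e)))-f(\Rob^{-1}(s(e)))|$, bound $d(\Rob^{-1}(r(e)),\Rob^{-1}(s(e)))\le C'\theta^{-n}$ by self-similarity, and observe that the chosen exponent makes $\theta^{-n\alpha}\rlg^{-n}=1$. Your Step 2 merely spells out the self-similarity estimate that the paper states in one line.
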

\begin{proof}
Suppose $f\in C(t_v)$ is H\"older continuous with exponent $\alpha = \frac{-\log(\rho_{lg})}{\log(\theta)}$, that is, $\left|\frac{f(x)-f(y)}{d(x,y)}\right|\leq C$ for some $C>0$ and all $x,y\in t_v$ then  
\[
\|[D,\pi(f)]\| = \sup_n\sup_{e\in E_{lg,n}} \left|\frac{f(\Rob^{-1} (r(e)))-f(\Rob^{-1}(s(e)))}{d(\Rob^{-1} (r(e)),\Rob^{-1}(s(e)))^\alpha}\right| \frac{d(\Rob^{-1} (r(e)),\Rob^{-1}(s(e)))^\alpha}{\rlg^n}. 
\]
this expression is finite since the first factor is bounded by $C$.
By self-similarity there exists $C'>0$ such that $d(\Rob^{-1} (r(e)),\Rob^{-1}(s(e)))\leq C'\theta^{-n}$. 
And $\alpha$ has been chosen so that $\theta^{-n\alpha}\rlg^{-n} = 1$.
\end{proof}
We refer to this spectral triple $(C(t_v),\HS_{lg}^v,D_{lg})$ as the {\em longitudinal spectral triple for the prototile $t_v$}. 
It should be noted that, although the map $\Rob^{-1}$ is continuous, the topology of $t_v$ and $\Pi_{-\infty,0}^v$ are quite different and so the Connes distance of this spectral triple does not induce the topology of $t_v$.
By Theorems~\ref{thm-zeta} and~\ref{thm-specmeas} the longitudinal spectral triple has metric dimension $s_{lg}=\frac{ d\log(\theta)}{-\log(\rlg)}$ for all $v$, but what depends on $v$ is the residue of the zeta function.
In fact, as compared to the zeta function of the full triple it has to be rescaled: \(\zeta_{tr}^v= (L_v/\sum_u L_u) \zeta_{tr}\).

The spectral measure $\mu_{lg}^v$ is easily seen to be the normalized Lebesgue measure on $t_v$, as the groupoid of tail equivalence acts by partial translations. 

Similarily to the transversal case we are interested in the quadratic form defined formally by 
\begin{multline}
Q_{lg}^v(f,g) = \Tt_{lg}^v([D_{lg},\pi_{lg}(f)]^*[D_{lg},\pi_{lg}(g)]) \\= 
\lim_{s\downarrow s_{lg}} 
\frac{\TR_{\HS_{lg}^v}\bigl(|D_{lg}|^{-s} [D_{lg},\pi_{lg}(f)]^*[D_{lg},\pi_{lg}(g)]\bigr)}{\zeta_{lg}^v(s)}\,.
\end{multline}
Again, this expression has yet little meaning, as we have not yet specified a domain for this form. 

\subsection{The spectral triple for $\Omega_\Phi$}
\label{ssec-SThull}
We now combine the above triples to get a spectral triple
\((C(\Omega_\Phi),\HS,D)\) for the whole tiling space
$\Omega_\Phi$.  
The graphs $\Gg$ and $\tilde\Gg$ have the same set of vertices $\Vv$,
so we notice from Remark~\ref{rem-Robinson} that the identification  
\[
\Pi_{-\infty,+\infty}(\Gg) 
= \bigcup_{v\in\Vv} \Pi_{-\infty,0}^v(\Gg) \times \Pi_{0,+\infty}^v(\Gg)  
= \bigcup_{v\in\Vv}   \Pi_{\infty}^v(\tilde\Gg) \times \Pi_{\infty}^v(\Gg)
\]
suggests to construct the triple for $\Omega_\Phi$ as a direct sum
of tensor product spectral triples related to the transversal and the
longitudinal parts.
In fact, $\Pi_{-\infty,0}^v(\Gg) \times \Pi_{0,+\infty}^v(\Gg)$ is dense in $t_v\times \Xi_{t_v}$ (see Remark~\ref{rem-Robinson}) and so we can use the tensor product construction for spectral triples to obtain a spectral triple for $C(t_v\times \Xi_{t_v})\cong C(t_v)\otimes C(\Xi_{t_v})$ from the two spectral triples considered above.
Furthermore, the $C^\ast$-algebra $C(\Omega_\Phi)$ is a subalgebra of $\bigoplus_{v\in\Vv} C(t_v\times \Xi_{t_v})$ and so the direct sum of the tensor product spectral triples for the different tiles $t_v$ provides us with a spectral triple for $C(\Omega)$:
\begin{equation}
 \label{eq-STOmega}
\HS=\bigoplus_{v\in\Vv} \HS_{tr}^v \otimes \HS_{lg}^v\,, \quad
\pi=\bigoplus_{v\in\Vv} \pi_{tr}^v\otimes \pi_{lg}^v\,, 
\quad D= \bigoplus_{v\in\Vv} \bigl( D_{tr}^v\otimes \id + \chi \otimes D_{lg}^v  \bigr)\,,
\end{equation}
where $\chi$ is the grading of the transversal triple (which flips the
orientations in $E_{tr}$). 
The representation of a function $f\in C(\Omega)$ then reads
\begin{equation}
 \label{eq-COmega}
 \pi(f) = \sum_{v\in\Vv} f_{tr}^v \otimes f_{lg}^v\,, \qquad \text{\rm with} \quad f_{tr}^v = \pi_{tr}^v(f) \in C(\Xi_{t_v})\,, \ f_{lg}^v = \pi_{lg}^v(f) \in C(t_v)\,.
\end{equation}

From the results in Section~\ref{ssec-tensorprod} we now get all the
spectral information of \((C(\Omega_\Phi),\HS,D)\). To formulate
our results more concisely let us call 
$A\in \Bb(\HS)$ {\em non resonant}, if \(\bar{A}_n
\stackrel{n\rightarrow \infty}{\sim} c_A e^{\imath n \phi}\) for some
$c_A>0$ and non resonant $\phi\in (0,2\pi)$ (Definition~\ref{def-resphase}). 
\begin{theo}
\label{thm-STOmega}
The above is a spectral triple for $C(\Omega_\Phi)$.
Its spectral dimension is
\[
s_0= s_{tr}+s_{lg} = \frac{d\log\theta}{-\log\rtr} +
\frac{d\log\theta}{-\log\rlg}\,, 
\]
and its zeta function $\zeta(z)$ has a simple pole at $s_0$ with strictly positive residue.
Moreover, for $A = \oplus_v A_{tr}^v\otimes A_{lg}^v$, 
with either both $A_{tr}^v$ and $A_{lg}^v$ strongly regular, or one is strongly regular and the other the sum of a strongly regular and a non resonant part, then we have 
\begin{equation}
\label{eq-decspecstate}
\Tt(A) = \sum_{v\in\Vv}  \mbox{\rm freq}(t_v) \mbox{\rm vol}(t_v) \;
\Tt_{tr}^v(A_{tr}^v) \Tt_{lg}^v(A_{lg}^v)  \,.
\end{equation}
In particular, the spectral measure is the unique invariant ergodic probability measure $\mu$ on $\Omega_\Phi$. 
\end{theo}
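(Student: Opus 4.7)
The plan is to assemble the five assertions by combining the direct-sum formalism of Section~\ref{ssec-sums} with the tensor-product formalism of Section~\ref{ssec-tensorprod}, feeding in the Bratteli-diagram results of Section~\ref{sec-ST-Bratteli} for each factor. First I would verify that \((C(\Omega_\sigma),\HS,D)\) is indeed a spectral triple: the bounded commutator axiom is stable under tensor product and direct sum of the building blocks constructed in Sections~\ref{ssec-trST} and~\ref{ssec-lgST}, and functions of the form $\pi(f)=\sum_v f_{tr}^v\otimes f_{lg}^v$ with each $f_{tr}^v$ (resp.\ $f_{lg}^v$) in the bounded-commutator subalgebra of the transversal (resp.\ longitudinal) triple form a dense subset of $C(\Omega_\sigma)$; compactness of the resolvent follows from finite summability of each tensor summand.

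For the spectral dimension and the simple pole, I would exploit the multiplicativity of the heat kernel on each $v$-summand,
\[
\TR_{\HS_{tr}^v\otimes\HS_{lg}^v}(e^{-tD_v^2}) \;=\; \TR_{\HS_{tr}^v}(e^{-t(D_{tr}^v)^2})\,\TR_{\HS_{lg}^v}(e^{-t(D_{lg}^v)^2}),
\]
and inject the asymptotics given by Theorem~\ref{thm-heatkernel}: each factor is asymptotic to a periodic function of $-\log t$ times $t^{-s_{tr}/2}$ (resp.\ $t^{-s_{lg}/2}$) with strictly positive mean (since $\bar\f_{r,a}=\Gamma(a/r)/r>0$ for the leading term and all $C^1_\HE=C_\HE>0$). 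Applying Lemma~\ref{lem-prod} then yields the spectral dimension $s_0=s_{tr}+s_{lg}$ and shows that $\zeta$ is simple at $s_0$. Summing over $v\in\Vv$ via Section~\ref{ssec-sums}, the residue at $s_0$ is the sum of the individual residues, each strictly positive, hence positive; the weight attached to $v$ in that sum is, by Lemma~\ref{lem-prod} together with the $v$-dependent rescalings $\zeta_{tr}^v=\freq(t_v)\zeta_{tr}+reg.$ and $\zeta_{lg}^v=(L_v/\sum_u L_u)\zeta_{lg}+reg.$, proportional to $\freq(t_v)\vol(t_v)=R_vL_v$.

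For the decomposition of $\Tt$, I would combine the tensor-product formula (Lemma~\ref{lem-prodstate}, and Corollary~\ref{cor-prodstrongreg} when both factors are strongly regular) with the direct-sum formula~\eqref{eq-sum-state}. When one factor is only weakly regular but of the form (strongly regular)+(non resonant), Lemma~\ref{lem-resphi} kills the non-resonant cross terms in the tensor product, and what survives is precisely the strongly regular part's factorised contribution $\Tt_{tr}^v(A_{tr}^v)\Tt_{lg}^v(A_{lg}^v)$. The weight $c_v$ in~\eqref{eq-sum-state} is the residue of $\zeta^v$ at $s_0$ divided by $\sum_u c_u$; using the computation of the previous paragraph this ratio equals $\freq(t_v)\vol(t_v)/\sum_u \freq(t_u)\vol(t_u)$, and the normalisation~\eqref{eq-normalization} together with~\eqref{eq-LR} gives $\sum_u\freq(t_u)\vol(t_u)=\sum_u R_uL_u=1$, yielding~\eqref{eq-decspecstate}.

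Finally, for the spectral measure, I would apply~\eqref{eq-decspecstate} to $A=\pi(f)$ for $f\in C(\Omega_\sigma)$: both $f_{tr}^v=\pi_{tr}^v(f)$ and $f_{lg}^v=\pi_{lg}^v(f)$ are strongly regular by Theorem~\ref{thm-specmeas} applied to each of the two Bratteli-diagram triples. Since $\Tt_{tr}^v(f_{tr}^v)=\int f_{tr}^v d\mu_{tr}^v$ (invariant probability measure on $\Xi_{t_v}$, i.e.\ normalized transversal frequency measure) and $\Tt_{lg}^v(f_{lg}^v)=\vol(t_v)^{-1}\int_{t_v} f_{lg}^v\,dx$ (normalized Lebesgue), the right-hand side of~\eqref{eq-decspecstate} equals the standard Radon–Nikodym expression for the integral of $f$ against the invariant probability measure on $\Omega_\sigma$. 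By unique ergodicity this is $\mu$. The main obstacle in this plan is the careful bookkeeping of normalisations linking $\mu_{tr}$, $\mu_{tr}^v$, $\mu_{lg}^v$, and $\mu$, and ensuring that the weakly-but-not-strongly regular cases (which will arise from the transversal horizontal pieces whose eigenvalues have modulus $1$) are controlled by the non-resonance hypothesis via Lemma~\ref{lem-resphi}.
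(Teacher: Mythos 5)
Your proposal follows essentially the same route as the paper: matchbox-wise tensor products handled via Lemma~\ref{lem-prod}, Lemma~\ref{lem-prodstate} and Corollary~\ref{cor-prodstrongreg}, the direct sum over $v$ via \eqref{eq-sum-state} with weights $n_v=\freq(t_v)\vol(t_v)=R_vL_v$, non-resonant parts killed by Lemmas~\ref{lem-stateres} and~\ref{lem-resphi}, and the identification of the spectral measure with $\mu$ by unique ergodicity. One small point of care: positivity of each $v$-residue requires that the mean of the \emph{product} $f_1f_2$ of the two periodic heat-trace factors be positive (equivalently that $\lim_{s\to 0}s\Ll[f_1f_2](s)>0$, which the paper checks via the explicit sum of Gamma factors, or by noting the periodic functions themselves are strictly positive); positivity of the two individual means alone would not suffice, though here the factors are strictly positive so your conclusion stands.
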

\begin{proof} Consider first the triple for the matchbox $t_v\times
  \Xi_{t_v}$, that is, the tensor product spectral triple for
  $C(t_v)\otimes C(\Xi_{t_v})$. Applying Lemma~\ref{lem-prod} we
  obtain the value $s_0^v = \frac{d\log\theta}{-\log\rtr} +
  \frac{d\log\theta}{-\log\rlg}$ for the abscissa of convergence of
  its zeta function $\zeta^v$. In particular, this value does not
  depend on $v$.
Furthermore,  
\[
\lim_{s\to s_0^+}(s-s_0)\zeta^v(s) =  
\frac{\freq(t_v)\vol(t_v)}{\sum_u \vol(t_u)} \frac{\sum_{k=-\infty}^\infty  
\Gamma(\frac{d \log \theta +2\pi i k}{-2\log (\rho_{tr})})  \Gamma(\frac{d \log \theta -2\pi i k}{-2\log (\rho_{lg})})  }
{2\Gamma(\frac{s_0}2) \log(\rho_{tr}) \log(\rho_{lg})}. 
\]
The above number is in fact a strictly positive real number, as it is up to a positive factor the mean of two strictly positive periodic functions.
It follows that the abscissa of convergence for the zeta function of
the direct sum of the above triples $\zeta$ is equal to the common
value $s_0=s_0^v$.
From this we can now determine with the help of Lemma~\ref{lem-prodstate} and (\ref{eq-sum-state}) the spectral state.

If $A_{tr}^v$ and $A_{lg}^v$ are both strongly regular then, by Corollary~\ref{cor-prodstrongreg}, \(\Tt(A_{tr}^v\otimes A_{lg}^v) =n_v\Tt_{tr}^v(A_{tr}^v)\Tt_{lg}^v(A_{lg}^v)\), with the factor \(n_v=\freq(t_v)\vol(t_v)\) because the states are normalized. 
If, say, $A_{tr}^v$ is regular and $A_{lg}^v=A_{lg,reg}^v+A_{lg,nres}^v$ is the sum of a strongly regular and a non resonant part, then 
\begin{eqnarray*}
\Tt(A_{tr}^v\otimes A_{lg}^v) & = & \Tt(A_{tr}^v \otimes A_{lg,sreg}^v) + \Tt(A_{tr}^v \otimes A_{lg,nres}^v) \\
& = & n_v\Tt_{tr}^v(A_{tr}^v)\Tt_{lg}^v(A_{lg,sreg}^v) + \freq(t_v)\Tt_{tr}^v(A_{tr}^v) \Tt(\id \otimes A_{lg,nres}^v) \\
& = & n_v \Tt_{tr}^v(A_{tr}^v)\Tt_2(A_{lg,sreg}^v) + 0\\
& = & n_v\Tt_{tr}^v(A_{tr}^v)\Tt_{lg}^v(A_{lg,sreg}^v) + n_v\Tt_{tr}^v(A_{tr}^v)\Tt_{lg}^v(A_{lg,nres}^v) \\
& = & n_v\Tt_{tr}^v(A_{tr}^v)\Tt_{lg}^v(A_{lg}^v)\,,
\end{eqnarray*}
where the second line follows by Corollary~\ref{cor-prodstrongreg}, the third by Lemma~\ref{lem-resphi}, and the forth by Lemma~\ref{lem-stateres} (the state of a non resonant operator vanishes: \(\Tt_{lg}^v(A_{lg,nres})=0\)).
The argument is the same if $A_{lg}^v$ is strongly regular, and $A_{tr}^v$ the sum of a strongly regular and a non resonant part.
Hence we get in both cases
\[
\Tt(A) = \Tt\bigl( \sum_v A_{tr}^v \otimes A_{lg}^v \Bigr) = \sum_v n_v \Tt_{tr}^v(A_{tr}^v)\Tt_{lg}^v(A_{lg}^v) \,.
\]
Since $n_v=\freq(t_v)\vol(t_v)$ is the $\mu$-measure of the matchbox $t_v\times \Xi_{t_v}$ we see that the spectral measure coincides with $\mu$.
\end{proof}

\subsection{Pisot substitutions}
\label{ssect-Pisot}
Recall that we consider here aperiodic primitive FLC substitutions which admit a fixed point tiling.
Their dilation factor $\theta$ is necessarily an algebraic integer.
There is a dichotomie: either the dynamical system defined by the tilings is weakly mixing (which means that there are no non-trivial eigenvalues of the translation action)
or the dilation factor $\theta$ of the substitution is a Pisot number, that is, an algebraic integer greater than $1$ all of whose Galois conjugates have modulus strictly smaller than $1$. 
In the second case the substitution is called a Pisot-substitution. Let us 
recall here some of the relevant results. 
We denote by $\|x\|$ the distance of $x\in\RM$ to the closest integer. 
A proof of the following theorem can be found in
\cite{Cassels}[Chap.~VIII, Thm.~I].  
\begin{theo}[Pisot, Vijayaraghavan]\label{thm-Pisot}
Let $\theta>1$ be a real algebraic number and $\alpha\neq 0$ another real such that
$\|\alpha\theta^n\|\stackrel{n\to\infty}{\longrightarrow} 0$. Then we have
\begin{itemize}
\item
$\theta$ is a Pisot number. We denote $\{\theta_j:1\leq j\leq J\}$ the conjugates of $\theta$ with $\theta_1=\theta$ and $|\theta_{j+1}|\leq|\theta_j|$. 
\item $\alpha\in\QM[\theta]$, i.e.\ $\alpha=p_\alpha(\theta)$ for some polynomial $p_\alpha$ with rational coefficients.
\item There exists $n_0$ such that for $n\geq n_0$
$$\sum_{j=1}^Jp_\alpha(\theta_j)\theta_j^n\in \ZM.$$
If $\theta$ is unimodular then $n_0=0$.
\end{itemize}
\end{theo}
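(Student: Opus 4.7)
The plan is to follow the classical route via rationality of power series (a theorem of Fatou type). Write $a_n$ for the nearest integer to $\alpha\theta^n$ and $\varepsilon_n=\alpha\theta^n-a_n$, so $|\varepsilon_n|\to 0$ by hypothesis. The bound $|a_n|=O(\theta^n)$ shows that
\[
f(z)=\sum_{n\ge 0}a_nz^n
\]
is holomorphic in the open disk $|z|<1/\theta$. First I would produce a meromorphic extension of $f$ to a strictly larger disk: since $f(z)=\dfrac{\alpha}{1-\theta z}-\sum_{n\ge 0}\varepsilon_n z^n$ and $\varepsilon_n\to 0$, the second sum defines a function holomorphic on $|z|<1/\theta+\delta$ for some $\delta>0$, so $f$ is meromorphic there with a single simple pole at $z=1/\theta$.

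Second, I invoke Fatou's theorem: a power series with \emph{integer} coefficients that admits a meromorphic extension to a disk of radius $>1/\theta\ge$ its radius of convergence is automatically a rational function $f=P/Q$ with $P,Q\in\ZM[z]$, $\gcd(P,Q)=1$, $Q(0)=1$. The pole at $1/\theta$ forces $\theta$ to be a zero of the reciprocal $\tilde Q(z):=z^{\deg Q}Q(1/z)\in\ZM[z]$; since $\tilde Q$ is monic up to sign, $\theta$ is an algebraic integer, and all the Galois conjugates $\theta_2,\ldots,\theta_J$ of $\theta$ must appear among the reciprocal roots of $Q$ (because the minimal polynomial of $\theta$ divides $\tilde Q$ in $\ZM[z]$). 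A partial-fraction expansion of $f$ then yields, for $n$ large,
\[
a_n=\sum_{j=1}^J r_j\theta_j^n+\sum_k s_k\mu_k^n,
\]
where the $\mu_k$ are the remaining reciprocal roots of $Q$.

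Third, matching this with $a_n=\alpha\theta^n-\varepsilon_n$ and reading off the leading coefficient gives $r_1=\alpha$; so $\alpha$ lies in the splitting field of the minimal polynomial of $\theta$. By Galois symmetry (the $a_n$ are rational, in fact integer), $r_j=p_\alpha(\theta_j)$ where $\alpha=p_\alpha(\theta)$ and $p_\alpha\in\QM[X]$, giving the second bullet and $\alpha\in\QM[\theta]$. For the Pisot property I would then argue by contradiction: if some $|\theta_j|\ge 1$ ($j\ge 2$) or some $|\mu_k|\ge 1$ occurs, the corresponding term in
\[
\varepsilon_n=-\sum_{j\ge 2}p_\alpha(\theta_j)\theta_j^n-\sum_k s_k\mu_k^n
\]
either grows or oscillates without decay, contradicting $\varepsilon_n\to 0$. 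This forces every such modulus to be strictly less than $1$, and in particular shows there are no extraneous $\mu_k$ terms (the denominator $Q$ is exactly the reciprocal of the minimal polynomial of $\theta$), proving $\theta$ is Pisot.

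The third bullet is now a reformulation: rearranging gives $\sum_{j=1}^J p_\alpha(\theta_j)\theta_j^n=a_n\in\ZM$ for $n\ge n_0$, where $n_0$ is the threshold beyond which the partial-fraction identity holds pointwise (it may be positive because Fatou's theorem only gives eventual agreement of the Taylor coefficients with the rational function's expansion when $Q(0)\ne 1$ is cleared). When $\theta$ is a unit, all $\theta_j$ are algebraic units and the denominators disappear, so the sum is the trace of an algebraic integer for every $n\ge 0$, hence $n_0=0$. The main obstacle I expect is the clean application of Fatou's theorem together with the bookkeeping that identifies the reciprocal zeros of $Q\in\ZM[z]$ with the full Galois orbit of $\theta$; ruling out modulus-one conjugates via the oscillation argument is delicate and is the heart of the Pisot conclusion.
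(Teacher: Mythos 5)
You are being compared against the classical argument: the paper does not prove Theorem~\ref{thm-Pisot} itself but refers to Cassels (Chap.~VIII, Thm.~I). Your proposal has a genuine gap at its crucial step. The ``Fatou theorem'' you invoke --- that an integer-coefficient power series admitting a meromorphic extension to a disk strictly larger than its disk of convergence is automatically rational --- is false as stated. For example, $\frac{1}{1-2z}+\sum_n c_nz^n$ with $c_n\in\{0,1\}$ not eventually periodic has integer coefficients, radius of convergence $\frac12$, and extends meromorphically (with a single simple pole) to the unit disk, yet is not rational. The true statements (Fatou's lemma: an integer power series which \emph{is} rational can be written $P/Q$ with $P,Q\in\ZM[z]$, $Q(0)=1$; P\'olya--Carlson: an integer power series with radius of convergence $1$ is rational or has the unit circle as natural boundary) do not apply, since your $f$ has radius of convergence $1/\theta<1$. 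Tellingly, your rationality step never uses the hypothesis that $\theta$ is algebraic; if it worked, it would settle the notorious open problem of whether $\|\alpha\theta^n\|\to 0$ alone forces $\theta$ to be Pisot.

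The missing idea, which is the heart of the classical proof, is to use algebraicity at the outset: let $c_Jx^J+\cdots+c_0\in\ZM[x]$ be the primitive minimal polynomial of $\theta$ and set $b_n=\sum_i c_i a_{n+i}\in\ZM$. Since $\sum_i c_i\alpha\theta^{n+i}=0$, one gets $b_n=-\sum_i c_i\varepsilon_{n+i}\to 0$, hence $b_n=0$ for $n\ge n_0$; so $(a_n)$ eventually satisfies an integer linear recurrence whose characteristic roots are exactly the conjugates $\theta_j$ (simple roots, and no extraneous $\mu_k$ --- note that your oscillation argument cannot exclude extraneous poles of $Q$ outside the closed unit disk, and their decaying contributions would spoil the third bullet). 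With rationality, the genuine Fatou lemma gives $Q(0)=1$, hence $\theta$ is an algebraic integer, and your remaining steps --- partial fractions, Galois equivariance $r_j=p_\alpha(\theta_j)$, and the (correct, if delicate) fact that an exponential sum with nonzero coefficients and bases of modulus $\ge 1$ cannot tend to $0$ --- do yield the Pisot property and the second and third bullets. Finally, the unimodular refinement $n_0=0$ does not follow from ``$\alpha\theta^n$ is an algebraic integer'': $\alpha$ need not be one (e.g.\ $\alpha=1/\sqrt5$ for the golden mean). Instead, observe that $t_n=\sum_j p_\alpha(\theta_j)\theta_j^n$ satisfies the monic integer recurrence given by the minimal polynomial of $\theta$, whose constant coefficient is $\pm1$ in the unimodular case, so integrality propagates backwards from $n\ge n_0$ to all $n\ge 0$.
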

We note that $p_\alpha(\theta_j)\neq 0$ for all $j$, since otherwise $p_\alpha$ would be divisible by the minimal polynomial of $\theta$ implying that also $p_\alpha(\theta)= 0$.

We assume throughout this work that $\theta$ is irrational so that there is at least one other conjugate ($J>1$).
Note that $\theta^{-1}$ is a polynomial (of degree $J-1$) in $\theta$
with coefficients in $\frac{1}{N}\ZM$ where $N$ is the constant term
in the minimal polynomial for $\theta$. $\theta$ is unimodular precisely if $N = \pm 1$. 

Recall that a dynamical eigenfunction to eigenvalue $\beta\in{\R^d}^*$, is a measurable function $f_\beta:\Omega_\Phi\to\CM$ which satisfies \(f_\beta(\omega+t)=e^{2\pi \beta(t)} f_\beta(\omega)\) for almost all $\omega\in \Omega_\Phi$ (w.r.t.\ the ergodic measure $\mu$) and all $t\in \RM^d$. If $f_\beta$ can be chosen continuous then $\beta$ is also called a continuous eigenvalue.
The set of eigenvalues forms a group which we denote $E$.
We call a vector $r\in\R^d$ a return vector (to tiles) if it is a vector between the punctures of two tiles of the same type in some tiling $T\in\Omega_\Phi$. 
\begin{theo}[\cite{Sol07}]
Consider a substitution with dilation factor $\theta$. The following are equivalent.
\begin{enumerate}
\item $\beta$ is an eigenvalue of the translation action.
\item $\beta$ is an eigenvalue of the translation action with continuous eigenfunction.
\item For all return vectors $r$ one has $\|\beta(r)  \theta^n\|\longrightarrow 0$. 
\end{enumerate} \end{theo}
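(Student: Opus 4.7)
The implication (2)$\Rightarrow$(1) is immediate, so the work is in (1)$\Rightarrow$(3) and (3)$\Rightarrow$(2). I would normalise any eigenfunction so that $|f_\beta|=1$ almost surely, which is possible because $|f_\beta|$ is translation invariant and the system is ergodic.

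For (1)$\Rightarrow$(3), the plan is to combine Lusin's theorem with the hyperbolic action of the substitution on $\Omega_\sigma$. Given a return vector $r$ between two tiles of a common type $t$ in some tiling $T$, the patch around these tiles shows that the tilings $T-x$ and $T-x-r$ (where $x$ is the puncture of the first tile) agree on a neighbourhood of the origin. Applying $\sigma^n$ sends this pair to $\sigma^n T-\theta^n x$ and $\sigma^n T-\theta^n x-\theta^n r$, which agree on the $n$-th order supertile of type $t$ at the origin; hence these two tilings converge to one another in the tiling metric as $n\to\infty$. By Lusin, pick a compact $K\subset\Omega_\sigma$ of measure $>1-\delta$ on which $f_\beta$ is uniformly continuous. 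By unique ergodicity applied to the set of $\omega\in\Omega_\sigma$ for which the patch around $t$ occurs at some translate $x$ with both $\sigma^n\omega-\theta^n x$ and $\sigma^n\omega-\theta^n x-\theta^n r$ lying in $K$, the relation $f_\beta(\omega'-\theta^n r)=e^{-2\pi i \beta(\theta^n r)}f_\beta(\omega')$ forces $|1-e^{2\pi i\beta(\theta^n r)}|$ to be smaller than the modulus of continuity of $f_\beta|_K$ evaluated at the patch-diameter, which tends to $0$. Hence $\|\beta(\theta^n r)\|\to 0$, which is (3).

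For (3)$\Rightarrow$(2), the plan is to build a continuous eigenfunction by an explicit Cauchy construction over supertiles. Fix a reference tiling $T_\sigma$ (the fixed point of $\sigma$) and a reference puncture; for each $n$ and each prototile type $v$ define the function $g_n\colon\Omega_\sigma\to S^1$ by $g_n(\omega)=e^{2\pi i\beta(x_n(\omega))}$, where $x_n(\omega)\in\RM^d$ is the puncture of the $n$-th order supertile of $\omega$ containing the origin relative to a chosen reference position in each supertile type. The construction is pattern-equivariant at scale $n$, hence $g_n$ is continuous. To compare $g_n$ and $g_{n+1}$ one must shift the reference of the $(n+1)$-supertile to a reference of one of its constituent $n$-supertiles; the discrepancy is $\beta(r)$ where $r$ is a vector of the form $\theta^n r_0$ with $r_0$ a return vector between prototiles. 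Condition (3) says $\|\beta(\theta^n r_0)\|\to 0$ uniformly in the finitely many choices of $r_0$, so the sequence $(g_n)_n$ is Cauchy in sup-norm up to multiplication by a unit complex number that can be absorbed. The limit $f_\beta$ is continuous, and by construction it satisfies $f_\beta(\omega+a)=e^{2\pi i\beta(a)}f_\beta(\omega)$ whenever $a$ is a translation that stays inside a fixed large supertile, hence by a density argument for all $a\in\RM^d$. This yields (2).

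The main obstacle will be the convergence argument in (3)$\Rightarrow$(2): one has to ensure that the Cauchy property of $(g_n)_n$ holds uniformly on $\Omega_\sigma$, which requires controlling the finitely many prototile types and their returns simultaneously, and that the reference choices match across scales by a careful use of border-forcing so that the error at each step really is $\beta(\theta^n r_0)$ for return vectors $r_0$ coming from a finite set. The implication (1)$\Rightarrow$(3), although conceptually clear via Lusin plus hyperbolicity, also requires the ergodic-theoretic argument to be set up carefully so that the two relevant translates simultaneously land in the Lusin continuity set with positive probability.
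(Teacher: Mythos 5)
The paper itself contains no proof of this statement: it is quoted verbatim from Solomyak \cite{Sol07}, so your plan can only be measured against the argument known from there. Your direction (1)$\Rightarrow$(3) is essentially that argument: self-similarity makes $\omega$ and $\omega-\theta^n r$ agree on a ball of radius of order $\theta^n$ around the origin for a set $A_n$ of tilings whose measure is bounded below uniformly in $n$, and a Lusin set then forces $e^{2\pi\imath\beta(\theta^n r)}\to 1$ because $|f_\beta|=1$ a.e. One repair: unique ergodicity is not the right tool to put both translates into the Lusin set $K$, since $K$ is merely a compact set and not patch-defined; what you need is the elementary estimate $\mu\bigl(A_n\cap K\cap(K+\theta^n r)\bigr)\ge \mu(A_n)-2\delta>0$, where only $A_n$ is controlled by patch frequencies.

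The genuine gap is in (3)$\Rightarrow$(2). Writing $g_{n+1}(\omega)\overline{g_n(\omega)}=e^{2\pi\imath\beta(\theta^n c)}$ with $c$ in a finite set of reference-difference vectors, condition (3) only gives $\|\beta(\theta^n c)\|\to 0$, and a sequence whose successive increments tend to zero need not be Cauchy: the telescoping estimate requires summability, $\sum_n\|\beta(\theta^n c)\|<\infty$, which does not follow from (3) as stated, ``uniformly in the finitely many choices'' or not; moreover the unit factor relating $g_n$ and $g_{n+1}$ depends on $\omega$ (on which child supertile contains the origin), so it cannot simply be absorbed. The missing idea is exactly the Pisot--Vijayaraghavan theorem (Theorem~\ref{thm-Pisot}, used in the paper in equation~\eqref{eq-Pisot}): once some return vector has $\beta(r)\neq 0$ and $\|\beta(r)\theta^n\|\to 0$, the number $\theta$ is Pisot, $\beta(r)=p_{\beta(r)}(\theta)$ with rational coefficients, and $\|\beta(r)\theta^n\|\le C|\theta_2|^n$ for large $n$; this geometric, hence summable, decay is what makes your sequence uniformly Cauchy, and it is also needed to control the jumps of $g_n$ at tilings whose origin lies on an $n$-supertile boundary (your $g_n$ are not continuous there, so continuity of the uniform limit again rests on these jumps, which are of the same arithmetic form, becoming small). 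If $\beta(r)=0$ for all return vectors the issue is vacuous since return vectors span $\RM^d$ and then $\beta=0$. Without this arithmetic input the Cauchy step, and with it the whole construction of the continuous eigenfunction, fails.
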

In particular, we may assume that all eigenfunctions are continuous, and if there are non-zero eigenvalues then $\theta$ must be a Pisot number and for all return vectors (to tiles) $r$ and large enough $n$ we have
\begin{equation}
\label{eq-Pisot}
\exp(2\pi\imath \beta(r)\theta^n) =  \exp\Bigl(-2\pi\imath \sum_{j=2}^Jp_{\beta(r)}(\theta_j)\theta_j^n \Bigr)
\end{equation}
We would like to give a geometric interpretation of the values of
$p_{\beta(r)}(\theta_j)$. To better illustrate this we consider first only the unimodular situation and
explain what changes in the general case later.
The following theorem can be found in \cite{BK}.
\begin{theo}\label{thm-BK}
Consider a $d$-dimensional substitution as above with dilation factor $\theta$. 
If $\theta$ is a unimodular Pisot number of degree $J$, then the group $E$ of eigenvalues is a dense subgroup of ${\RM^d}^*$ of rank $dJ$.
\end{theo}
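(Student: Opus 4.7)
The plan is to establish the upper bound on the rank, the lower bound, and density in turn. For the upper bound, I would fix return vectors $r_1,\dots,r_d \in \RM^d$ forming an $\RM$-basis (available by FLC, minimality, and the hypothesis that $\RM^d$ is spanned by return vectors to tiles). For any eigenvalue $\beta \in E$, Solomyak's theorem yields $\|\beta(r_i)\theta^n\| \to 0$, and Theorem~\ref{thm-Pisot} then forces $\beta(r_i) \in \QM[\theta]$. Since $\QM[\theta]$ is a $\QM$-vector space of dimension $J$, evaluation on the basis embeds $E$ as a subgroup of $\QM[\theta]^d \cong \QM^{dJ}$, so $\mathrm{rank}(E) \leq dJ$.

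For the lower bound I would exploit the self-similar structure of the return vector set $R$. By recognizability, inflation by $\theta$ sends pairs of same-type tiles to pairs of same-type supertiles, and tracking the combinatorics of the substitution one obtains $\theta R \subseteq R$, so $R$ is a $\ZM[\theta]$-module. Here the unimodularity hypothesis is decisive: $\theta^{-1} \in \ZM[\theta]$, so $\ZM[\theta]$ is a subring stable under $\theta$ and $\theta^{-1}$. Combining primitivity of the substitution matrix with the degree hypothesis $[\QM(\theta):\QM]=J$, one verifies that $R$ contains a sub-$\ZM[\theta]$-module of rank $d$ with a basis $e_1,\dots,e_d$. Given any $(\alpha_1,\dots,\alpha_d)\in\ZM[\theta]^d$, define $\beta \in {\RM^d}^*$ by $\ZM[\theta]$-linear extension of $\beta(e_i)=\alpha_i$. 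For every $r$ in this submodule $\beta(r)=p(\theta)$ with $p\in\ZM[X]$, and the trace
\[
\sum_{j=1}^J p(\theta_j)\theta_j^n \in \ZM
\]
is a rational integer (being the Galois trace of an algebraic integer), so
\[
\|\beta(r)\theta^n\| \leq \Big|\sum_{j=2}^J p(\theta_j)\theta_j^n\Big| \xrightarrow{n\to\infty} 0
\]
because all subdominant conjugates satisfy $|\theta_j|<1$. By Solomyak's theorem $\beta \in E$, and the $dJ$ choices $\alpha_i = \theta^k$ ($i=1,\dots,d$, $k=0,\dots,J-1$) produce $\ZM$-linearly independent elements of $E$ by the embedding from the upper bound, so $\mathrm{rank}(E) \geq dJ$.

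Density then follows quickly: since $\theta$ is irrational, $\ZM + \ZM\theta$ is dense in $\RM$, a fortiori $\ZM[\theta]$ is, and therefore the image of $E$ under $\beta \mapsto (\beta(e_1),\dots,\beta(e_d))$ is dense in $\RM^d$, implying that $E$ is dense in ${\RM^d}^*$.

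The main obstacle is the structural claim on $R$. While $\theta R \subseteq R$ is reasonably clean given recognizability, proving that $R$ has $\ZM[\theta]$-rank exactly $d$ requires combining the Perron--Frobenius analysis of the substitution matrix with the hypotheses $[\QM(\theta):\QM]=J$ and $\theta$ a unit. In the non-unimodular case this step genuinely fails in the same form and one must replace $\ZM[\theta]$ by $\ZM[\theta,\theta^{-1}]$, whose interaction with the return vector module is more subtle; this is precisely why the theorem as stated requires unimodularity, and it is the geometric core of the [BK] argument.
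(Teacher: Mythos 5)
The paper itself offers no proof of this statement — it is imported wholesale from \cite{BK} — so your attempt has to stand on its own, and I will judge it as such. The upper bound and the density step are essentially sound: for $\beta\in E$ Solomyak's criterion plus Theorem~\ref{thm-Pisot} gives $\beta(r_i)\in\QM[\theta]$ on an $\RM$-basis of return vectors, the evaluation map is injective, so $\mathrm{rank}(E)\le dJ$; and density of $\ZM[\theta]$ in $\RM$ transports to density of $E$ through the linear isomorphism $\beta\mapsto(\beta(e_1),\dots,\beta(e_d))$, \emph{provided} the functionals you construct really are eigenvalues.

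That proviso is where the genuine gap sits. Solomyak's theorem requires $\|\beta(r)\theta^n\|\to 0$ for \emph{all} return vectors $r$, but your trace argument only controls $\beta$ on the $\ZM[\theta]$-submodule $M$ generated by the chosen $e_1,\dots,e_d$ (note also that the set $R$ of return vectors is not itself a group; one must pass to the group $\Gamma$ it generates, which satisfies $\theta\Gamma\subseteq\Gamma$). For $r\in\Gamma\setminus M$ you know nothing: $\beta(r)$ need not lie in $\QM[\theta]$ at all, let alone be an algebraic integer with integral trace, so the criterion is unverified exactly where it matters. Closing this requires the structural fact you dispatch with ``one verifies'': that $\Gamma$ has $\ZM$-rank exactly $dJ$, i.e.\ $\QM\Gamma$ is a $d$-dimensional $\QM(\theta)$-vector space, so that $M$ is of finite index in $\Gamma$ (after which replacing $\beta$ by $[\Gamma:M]\,\beta$ repairs the argument). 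This is not a formal consequence of primitivity, $[\QM(\theta):\QM]=J$ and unimodularity; it is the substantive rigidity input about return modules of self-similar FLC tilings on which the argument in \cite{BK} and its antecedents rests, and it is precisely the point a referee would ask you to prove. Finally, unimodularity is not what makes the return vectors a $\ZM[\theta]$-module ($\theta\Gamma\subseteq\Gamma$ alone does that, and the trace of an algebraic integer is a rational integer whether or not $\theta$ is a unit); its real role concerns the finer structure of $E$ (free/finitely generated, so that $\hat E$ is a torus rather than an inverse limit of tori), so your closing paragraph misidentifies where that hypothesis enters.
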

Recall for instance from \cite{BK} that the maximal equicontinuous factor of 
$(\Omega_\Phi,\RM^d)$ can be identified with $\hat E$ the Pontrayagin dual of the group of 
eigenvalues $E$ equipped with the discrete topology. Its $\RM^d$ action is 
induced by translation of eigenfunctions, namely $\alpha_r:\hat E\to \hat E$, $r\in\RM^d$, acts as
$(\alpha_r(\chi))(\beta) = e^{2\pi\imath \beta(r)} \chi(\beta)$, $\chi\in\hat E$, $\beta\in E$.
The factor map $\pi:\Omega\to \hat E$ is dual to the embedding of the eigenfunctions in $C(\Omega_\Phi)$. We may choose an element $T^*\in\Omega_\Phi$ and consider $\pi(T^*)$
as the neutral element in $\hat E$. 

$E$ is a free abelian group of rank $dJ$. We can therefore identify it with a regular lattice in an 
$dJ$-dimensional Euclidean space $\Ue$ equipped with a scalar product $\langle\cdot,\cdot\rangle$, and $\hat E$ with $\Ue/E^{rec}$ via the map 
$\Ue/E^{rec}\ni \xi \mapsto e^{2\pi\imath \langle\xi,\cdot\rangle}\in \hat E$ ($E^{rec}$ is the reciprocal lattice).
With this description of $\hat E$ the dual pairing $\hat E\times E \to \CM$ becomes
$\Ue/E^{rec}\times E \ni (\xi,\beta) \mapsto e^{2\pi\imath \langle\xi,\beta\rangle}\in \CM$. Furthermore 
$e^{2\pi\imath \langle\cdot,\beta\rangle}$, $\beta\in E$, is an eigenfunction of
the $\R^{dJ}$-action by translation on $\Ue/E^{rec}$ to the eigenvalue $\langle\cdot,\beta\rangle\in
{\RM^{dJ}}^*$. 
Finally, the action of $\alpha_r$ becomes  $(\alpha_r(\xi))(\beta) = e^{2\pi\imath \beta(r)} \xi(\beta)$ and hence
$\alpha_r(0)$ is the unique element of $ \Ue/E^{rec}$ satisfying
$\beta(r) - \langle\alpha_r(0),\beta\rangle\in\Z$ for all $\beta\in E$.
Now $r\mapsto\alpha_r(0)$ 
is continuous and locally free. 
It follows that the map $r\mapsto \alpha_r(0)$ lifts to a linear
embedding of $\R^d$ into the universal cover $\Ue$ of $\hat
E$. We denote the lift of $\alpha_r(0)$ with $\tilde r$. The vector $\tilde r$ is thus defined by
\[ 
\beta(r) = \langle\tilde r,\beta\rangle,\quad  \forall \beta\in E\,.
\]
The image of the embedding, which we denote by $U$, is simply
the lift of the orbit of $\pi(T^*)$. The acting group $\RM^d$, or equivalently the orbit of $\pi(T^*)$,
can therefore be identified with a subspace $U$ of a euclidean space $\Ue$. Our next aim is to construct a complimentary subspace $S$ to $U$.

The endomorphism on ${\RM^d}^*$ which is dual to the
linear endomorphism $\theta\id$ on $\RM^d$ preserves the group of eigenvalues $E\subset {\RM^d}^*$. It therefore restricts to a group endomorphism of $E$.
We denote this restriction by $\varphi$. 
With respect to a basis of $E$, $\varphi$ is thus an integer $dJ\times dJ$ matrix. Denote by $\varphi_\RM$ its linear extension to $\Ue$ and by $\varphi^t_\RM$ the transpose\footnote{Although taking the transpose is a dualization we have not returned to the original map $\theta\id$ since 
the two dualizations are w.r.t.\ two different dual pairings.}
of $\varphi_\RM$. Then we have
$$\langle\varphi^t_\RM(\tilde r),\beta\rangle=\langle\tilde r,\varphi_\RM(\beta)\rangle=\phi(\beta)(r) = \theta\beta(r) = \theta\langle\tilde r,\beta\rangle$$
showing that $\varphi^t_\RM(\tilde r)=\theta \tilde r$.
We know also from \cite{BK} that $\varphi_\RM$ has eigenvalues $\theta_j$, $1\leq j\leq J$ each with multiplicity $J$. It follows that $U$ is the eigenspace of $\varphi^t_\RM$ to the eigenvalue $\theta$ 
and so by the Pisot-property is the full unstable subspace of $\varphi^t_\RM$. We let $S$ be the stable subspace of  $\varphi^t_\RM$.

\paragraph{Example:} Before we continue the description we provide the example of the Fibonacci substitution tiling. Here $\theta=\frac{1+\sqrt{5}}2$ is the golden mean and $E$ is the rank $2$ subgroup $\Z[\theta]\subset\RM^*$. We choose the basis $\{\theta,1\}$ for $E$. Then $\varphi$ has matrix elements
$\left(\begin{array}{cc}1 &1\\ 1& 0\end{array}\right)$. $\varphi_\RM$ has, of course, the same matrix expression. It follows that $U$ is the subspace of $W=\RM^2$ generated by the vector $(\theta,1)$ and $S$ is the subspace generated by $(-\theta^{-1},1)$. It is no coincidence that this is the cut \& project setup for the Fibonacci tiling, except that we do not have a window.
 
 \medskip
 
We return to the geometric description of the polynomicals.
Let $r$ be a return vector. 
We choose a lattice base $\{\beta_{\nu}\}_{\nu=1\cdots,dJ}$ for $E$
and let $\{\beta^{\nu}\}_{\nu=1\cdots,dJ}$ be the dual base for $E^{rec}$.
We can express $\tilde r$ in the dual base. Since $(\tilde r,\beta_\nu) =\beta_\nu(r)$ we have
\[  
\tilde r
 = \sum_{\nu=1}^{dJ}   \beta_\nu(r) \beta^\nu\,.
 \]
By Theorem~\ref{thm-Pisot}
there exist polynomials $p_\nu$ with rational coefficients such that 
\[
 \beta_\nu(r) = p_\nu(\theta).
\]
In particular, the vector with coefficients $(p_\nu(\theta))$ is an eigenvector of $\varphi^t_\R$ to eigenvalue $\theta$.
The eigenvalue equation
$\sum_\mu (\varphi^t_{\nu\mu}-\theta\delta_{\nu\mu})p_\mu(\theta)$ is a set of $dJ$ polynomial equations with rational coefficients and hence it is satisfied for $\theta$ whenever it is satisfied for all conjugates $\theta_j$.
Thus  $(p_\nu(\theta_j))$ is an eigenvector of $\varphi^t_\R$ to eigenvalue $\theta_j$.
If $j>1$ this vector lies thus in the stable manifold $S$ of $\varphi^t_\R$ at $0$.
We may thus define the following {\em star map} 
${}^*:\{\tilde r
: r\mbox{ is a return vector}\}\subset U\to S$, 
\begin{equation}\label{eq-star}
r\mapsto \tilde r
^* = \sum_{j=2}^J\sum_{\nu=1}^{dJ}  p_\nu(\theta_j) \beta^\nu\,.
\end{equation}
The map $\tilde r\mapsto \tilde r^*$ is actually Moody's star map. 
Indeed, $W$ decomposes into the direct sum of vector spaces  $W= S\oplus U$ and contains
$E^{rec}$ as regular lattice. Let $\pi_U$ and $\pi_S$ be the projection onto $U$ and $S$, resp., with kernel $S$ and $U$, resp.  
Recall that  $\sum_{j=1}^J p_\nu(\theta_j) \in \Z$ (we assumed that $\theta$ is unimodular).
This can be reinterpreted as $\tilde r + \tilde r^*\in E^{rec}$.
Since $S$ intersects $E^{rec}$ only in the origin\footnote{$\varphi^t$ preserves the intersection $E^{rec}\cap S$ which is hence invariant under a strictly contracting map, and since the intersection is uniformly discrete it must be $\{0\}$.}, 
$\tilde r^*$ is the unique vector $w\in S$ which satisfies
$\tilde r + w \in E^{rec}$. Stated differently,  
$\tilde r^*=\pi_S \circ \pi_U^{-1}(\tilde r)$, is the image under $\pi_S$ 
of a preimage in $E^{rec}$ of $\tilde r$ under $\pi_U$.

Let $S_2$ be the subspace of $S$ generated by the subleading conjugates of $\theta$, i.e.\ the eigenspace of $\varphi^t_\RM$ to the eigenvalues $\theta_2,\cdots,\theta_L$ which are characterised by the property that they all have the same modulus: $|\theta_j|=|\theta_2|$ for all $2\leq j\leq L\leq J$. We define the {\em reduced star map} ${}^\star:U\to S_2$ by  
\begin{equation}\label{eq-redstar}
r\mapsto \tilde r
^\star = \sum_{j=2}^L\sum_{\nu=1}^{dJ}  p_\nu(\theta_j) \beta^\nu\,.
\end{equation}
By linearity we get 
\begin{equation}\label{eq-geom}
\sum_{j=2}^L p_{\beta(r)}(\theta_j)  = \langle\tilde r^\star,\beta\rangle
\end{equation}
which yields the geometric interpretation of the polynomials $p_{\beta(r)}$ we looked for.

\begin{rem}{\rm
If $\theta$ is not unimodular the arguments are principally the same except for having to work with 
inverse limits. The basic differences are that $n_0$ might be strictly larger than $0$ in Theorem~\ref{thm-Pisot} and that $\varphi$ is no longer invertible. In fact, in the non uni-modular case 
Theorem~\ref{thm-BK} has to be modified in the following way \cite{BK}:
there exists a dense rank $dJ$ subgroup $F$ such that $E = \lim_\to (F,\varphi)$. In particular,
$\hat E$ is an inverse limit of $dJ$-tori. Now one can construct the euclidean space $\Ue$ with its stable and unstable subspaces under $\varphi^t_\RM$ as above but for $F$ instead of $E$. 
Then $\hat E$ corresponds to the subgroup $E^{rec}:=\bigcup_{n\geq 0}
{\varphi^t_\RM}^{-n}( F^{rec})\subset \Ue$.\footnote{Note that
  $E^{rec}$ is defined by this union; it is not the reciprocal lattice of $E$ which wouldn't make sense, as $E$ is not a regular lattice in $\Ue$.}
With these re-interpretations, the map $r\mapsto \tilde r^*$ is the same, namely $\tilde r$ is the lift of $\alpha_r(0)$, where $\alpha_r$ the action of $r\in\R^d$ on the torus $\hat F$, and $\tilde r^*$ is the unique vector $w\in S$ which satisfies $\tilde r + w \in E^{rec}$. }
\end{rem}

\subsection{Dirichlet forms}
\label{ssec-DirForm}

The goal of this section is to investigate when the formal expression
for the Dirichlet forms can be made rigourous. 
We will see that, apart from trivial cases, this fixes the values for
$\rho_{tr}$ and $\rho_{lg}$. 
The study is technical.
We explain the steps of the derivation which are similar for both
forms, but we do not give all the straightforward but tedious details
of the technical estimates.

We start by specializing the formal expression of the Dirichlet form 
given in equations~\eqref{eq-Q2} and~\eqref{eq-qn} to our set-up.
According to equation~\eqref{eq-COmega}, we view the representation of
$h\in C(\Omega_\Phi)$ as the sum of elementary functions \( \pi(h)=
\sum_{v\in\Vv} h_{tr}^v \otimes h_{lg}^v\). 
And by Theorem~\ref{thm-specmeas}, any such $h_{\alpha}^v$ is a strongly regular operator on $\HS_{\alpha}^v$.
Hence, by Lemma~\ref{lem-prodform}, we can decompose the form as follows
\[
Q(f,g) = \Tt\bigl( [D,\pi(f)]^\ast [D,\pi(g)] \bigr) =
Q_{lg}(f,g) + Q_{tr}(f,g)\,,
\]
with
\begin{subequations}
\label{eq-DirForm-lgtr}
\begin{equation}
\label{eq-DirForm-lg}
Q_{lg}(f,g) = \sum_{v\in\Vv} \freq(t_v) \,
\Tt\bigl( \id\otimes [D_{lg}^v,f_{lg}^v]^\ast [D_{lg}^v,g_{lg}^v]  \bigr)
\Tt_{tr}^v\bigl (f_{tr}^{v\ast} g_{tr}^v) \bigr)
\end{equation}
\begin{equation}
\label{eq-DirForm-tr}
Q_{tr}(f,g) = \sum_{v\in\Vv} \vol(t_v) \, 
\Tt\bigl( [D_{tr}^v,f_{tr}^v]^\ast [D_{tr}^v,g_{tr}^v] \otimes \id \bigr)
\Tt_{lg}^v \bigl( f_{lg}^{v\ast} g_{lg}^v \bigr)
\end{equation}
\end{subequations}
We are going to show in the next two paragraphs, that for $f,g$ in a suitable core, the operators \([D_{lg}^v,f_{lg}^v]^\ast [D_{lg}^v,g_{lg}^v]\) are strongly regular, and the operators \([D_{tr}^v,f_{tr}^v]^\ast [D_{tr}^v,g_{tr}^v]\) are the sums of a strongly regular and a non resonant part.
Hence by Corollary~\ref{cor-prodstrongreg} and Lemma~\ref{lem-resphi} we get the following decomposition of the forms:
\begin{subequations}
\label{eq-DirForm-lgtr2}
\begin{equation}
\label{eq-DirForm-lg2}
Q_{lg}(f,g) = \sum_{v\in\Vv} n_v \,
Q_{lg}^v(f_{lg}^v,g_{lg}^v) \int_{\Xi_{t_v}} f_{tr}^{v\ast} g_{tr}^v \,d \mu_{tr}^v \,,
\end{equation}
\begin{equation}
\label{eq-DirForm-tr2}
Q_{tr}(f,g) = \sum_{v\in\Vv} n_v \, Q_{tr}^v(f_{tr}^v,g_{tr}^v)  \int_{t_v} f_{lg}^{v\ast} g_{lg}^v \,d \mu_{lg}^v \,,
\end{equation}
\end{subequations}
where \(n_v= \vol(t_v)\freq(t_v)\) is the normalization factor in
equation~\eqref{eq-decspecstate}, $d\mu_{tr}^v$ is the normalized
invariant measure on $\Xi_{t_v}$ (by the results of
Section~\ref{ssec-trST}), and $d \mu_{lg}^v$ the normalized Lebesgue
measure on $t_v$ (by the results of Section~\ref{ssec-lgST}). 

Each of the forms $Q_{lg}^v$ and $Q_{tr}^v$ are of the type given in
Section~\ref{ssec-form}, we only have to substitute the
expression
\begin{equation}
\label{eq-deltaef}
\delta_e^\alpha f = \frac{f\circ\Rob^{-1}(r(e)) -f\circ
  \Rob^{-1}(s(e))}{\rho_\alpha^n} \,. 
\end{equation}
in $q_{\alpha,n}^v$ in equation~\eqref{eq-qn} for $Q_{\alpha}^v$, $\alpha\in\{tr,lg\}$ and
$v\in\Vv$.
It follows from Proposition~\ref{prop-form}
that the forms $Q_{tr}^v$ and $Q_{lg}^v$ are symmetric, positive definite, and
Markovian on the domains 
\(
\Dd_{tr}^v=\bigl\{f \in L^2_{\RM}(\Xi_{t_v},d \mu_{tr}^v) \, :
\, Q_{tr}^v(f,f) < +\infty \bigr\}
\) and \(
\Dd_{lg}^v=\bigl\{f \in L^2_{\RM}({t_v},d \mu_{lg}^v) \, :
\, Q_{lg}^v(f,f) < +\infty \bigr\}
\), respectively.
\paragraph{The longitudinal form}
Let us first look at the longitudinal part $Q_{lg}^v$, which is simpler.
We show that $q^v_{\alpha,n}(f,g)$ has a limit for suitable $f,g$.
This will prove that \([D_{lg}^v,f_{lg}^v]^\ast [D_{lg}^v,g_{lg}^v]\) is strongly regular by Corollary~\ref{cor-cesar}, and will imply the decomposition of $Q_{lg}$ given in equation~\eqref{eq-DirForm-lg2}.

We wish to adapt the parameter $\rlg$ so as to obtain a non-trivial form with core which is dense in $C^1(t_v)$.

Given an edge $h\in\hat \Hh_{lg}$, let us denote by $a_h\in \RM^d$ the translation vector between the punctures of the microtiles associated with $s(h)$ and $r(h)$ in the decomposition of the tile associated with $s^2(h)=sr(h)$.
If $a_e \in \RM^d$ denotes the corresponding vector for $e\in E_{lg,n}^v(h)$,  equation~\eqref{eq-translg} gives $a_e=\theta^{-n}a_h$.
For $n$ large we can thus approximate (uniformly, by uniform continuity of $f$)
\[
 \dlg_{e} f \simeq \left( \frac{\theta^{-1}}{\rlg} \right)^{n} 
	\ (a_h \cdot \nabla) f(x_e)\,, \quad \text{\rm where} \quad x_e = \Rob^{-1}(s(e))\,.
\]
Choosing $f,g\in C^2_\RM(t_v)$, we can substitute the above approximation in $q_{lg,n}^v(f,g)$, up to an error term uniform in $e$, which gives 
\[
 q_{lg,n}^v(f,g) \simeq
\left( \frac{\theta^{-1}}{\rlg}\right)^{2n}  \frac{1}{\#E_{lg,n}^v}
\sum_{h\in \hat \Hh_{lg}}
\sum_{e \in E_{lg,n}^v(h)}  
(a_h \cdot \nabla) f(x_e) \; (a_h \cdot \nabla) g(x_e) \,,
\]
where $E_{lg,n}^v(h)$ is the set of edges  of type $h$ in $E_{lg,n}^v$.

In order to estimate the above sum, we decompose $t_v$ into boxes.
Assuming that $n$ is large, we choose an integer $l=l_{n}$ such that \(1<< l<<n\),  and consider the boxes \(B_\gamma=\Rr^{-1}([\gamma])\) for \(\gamma \in \Pi_{l}^v(\tilde\Gg)\), the set of paths of length $l$ which start at $v$ at level $0$ (here $[\gamma]$ is the set of infinite paths which agree with $\gamma$ from level $0$ to level $l$).
The idea is that $B_\gamma$ should be large enough to allow us to make averages over it, yet small enough for continuous functions to be approximately constant on it: \(F(x_e) \simeq F(x_\gamma)\), for some $x_\gamma\in B_\gamma$, and all $e$ for which $x_{e}\in B_\gamma$.
We have
\[
q_{lg,n}^v(f,g) \simeq  \left( \frac{\theta^{-1}}{\rlg}\right)^{2n}  \sum_{h\in \hat \Hh_{lg}}
\sum_{\gamma\in\Pi_{l}^v} 
\frac{\#E_{lg,n}^v(h,\gamma) }{\#E_{lg,n}^v}
\
(a_h \cdot \nabla_{lg}) f(x_{\gamma}) \; (a_h \cdot \nabla_{lg}) g(x_{\gamma}) \,,
\]
for some $x_\gamma\in B_\gamma$, and where $E_{lg,n}^v(h,\gamma)$ stands for the set of edges of type $h$ whose associated (range and source) infinite paths agree with $\gamma$ from level $0$ to level $l$.
Using the estimation of the powers of the substitution matrix in equation~\eqref{eq-An} (note that we use the {\em transpose} of $A$ for $\tilde\Gg$ here) we have
\[
\frac{\#E_{lg,n}^v(h,\gamma) }{\#E_{lg,n}^v} = \frac{L_{r(\gamma)} R_{s^2(h)}\pf^{n-1-l} (1+o(1))}{L_v\sum_{h'\in\hat \Hh_{lg}}R_{s^2(h')}\pf^{n-1} (1+o(1))}
= c_{lg} \,\freq(t_{s^2(h)}) \, \mu_{lg}^v(B_\gamma) \;(1+o(1))
\]
with
\begin{equation}\label{eq-clg}
c_{lg}= \Bigl( \sum_{h\in\hat \Hh_{lg}}R_{s^2(h)} \Bigr)^{-1}.
\end{equation}
The last step uses the approximation \(\mu_{lg}^v(B_\gamma)F(x_\gamma) \simeq \int_{B_\gamma} F(x) d^d \mu_{lg}^v\), to obtain
\[
 \left( \frac{\theta^{-1}}{\rlg}\right)^{-2n} q_{lg,n}^v(f,g)  \simeq q^v_{\alpha}(f,g) :=
c_{lg} \sum_{h\in\hat \Hh_{lg}} \freq(t_{s^2(h)})
\int_{t_v} 
(a_h \cdot \nabla) f \; (a_h \cdot \nabla) g \; d \mu_{lg}^v\,.
\]
Hence, if $\rlg=\theta^{-1}$, the sequence $q_{lg,n}^v(f,g)$ converges to $q_{lg}^v(f,g)$, therefore by Corollary~\ref{cor-cesar} we have \(Q_{lg}^v(f,g)=q_{lg}^v(f,g)\).
We now have to compute $Q_{lg}$ from equation~\eqref{eq-DirForm-lg2}, summing up the $Q_{lg}^v$ over $v\in\Vv$.
Notice that by the decomposition of the representation of a function as $\pi(h)=\sum_{v\in\Vv} h_{tr}^v\otimes h_{lg}^v$, we have
\[
\sum_{v\in\Vv} n_v \,
\int_{t_v} (a_h \cdot \nabla) f \; (a_h \cdot \nabla) g \; d \mu_{lg}^v
\int_{\Xi_{t_v}} f_{tr} g_{tr} d\mu_{tr}^v = 
 \int_{\Omega}  (a_h \cdot \nabla_{lg} )f  \; (a_h\cdot \nabla_{lg}) g \; d\mu\,,
\]
where $\nabla_{lg}$ is the {\em longitudinal gradient} on $\Omega_\Phi$: it takes derivatives along the leaves of the folliation; so it reads on the representation of $C(\Omega_\Phi)$ simply $\nabla_{lg}=\id\otimes \nabla_{\RM^d}$.
Define the operator on $L^2_{\RM}(\Omega_\Phi, d\mu)$: 
\begin{equation}
\label{eq-lgLaplace}
\Delta_{lg} = c_{lg} \nabla_{lg}^\dagger \Kk \nabla_{lg} \,, \quad \text{\rm with} \quad 
\Kk = \sum_{h\in\hat \Hh_{lg}} \freq(t_{s^2(h)}) a_h \otimes a_h\,.
\end{equation}
We thus we have 
\begin{equation}
Q_{lg} (f,f)
= \left\{ \begin{array}{ll}
 \langle f, \; \Delta_{lg} \; f\rangle_{ L^2_{\RM}(\Omega_\Phi, d\mu)} & \mbox{if } \rlg =\theta^{-1} \\ 
0 & \mbox{if } \rlg > \theta^{-1} \\
+\infty & \mbox{if } \rlg < \theta^{-1}
\end{array}\right. \,,
\quad \text{\rm for all } f\in C_{lg}^2(\Omega_\Phi)\,,
\end{equation}
where $C_{lg}^2(\Omega_\Phi)$ is the space of longitudinally $C^2$ functions on $\Omega_\Phi$.
So we see that for $\rlg\ge \theta^{-1}$, $\Delta_{lg}$ is essentially self-adjoint on the domain $C_{lg}^2(\Omega_\Phi)$, and therefore the form $Q_{lg}$ is closable.
For $\rlg<\theta^{-1}$ the form is not closable.

\paragraph{The transversal form}
We show that $q_{tr,n}^v(f,g)$ decomposes, for suitable $f,g$, into two pieces: one which has a limit, and the other which is oscillating with a phase which will be assumed to be non resonant (Definition~\ref{def-resphase}).
This will prove that \([D_{tr},f_{tr}]^\ast [D_{tr},g_{tr}]\) is the sum of a strongly regular and a non resonant part, and will imply by Corollary~\ref{cor-prodstrongreg} and Lemma~\ref{lem-stateres} the decomposition of $Q_{tr}$ claimed in equation~\eqref{eq-DirForm-tr2}.

We wish to adapt the parameter $\rtr$ so as to obtain a non-trivial form with core which is dense in $C(\Xi_{t_v})$.

One might be tempted to consider functions which are transversally locally constant to define its core.
However, it quickly becomes clear that $Q_{tr}$ vanishes on such functions. 
Indeed, if $f\in C(\Omega_\Phi)$ is transversally locally constant, then there exists $n_f\in \NM$, such that $\Rr^{-1}(\Pi_{0,n_f}(\Gg))$ gives a partition of $\Xi$ on which $f$ is constant.
So we see from equation~\eqref{eq-deltaef} that $\dtr_{e}f = 0$ for all $e\in E_{tr,n}$, $n\ge n_f$, and hence $Q_{tr}(f,f)=0$.

We will therefore consider a different core, namely the space generated by the eigenfunctions of the action.
We assume that these form a dense set of $L^2(\Omega_\Phi)$ which is equivalent to the fact that the tiling is pure point diffractive.

Consider an eigenfunction to eigenvalue $\beta$, $f_\beta:\Omega_\Phi\to\CM$.
By definition $f_\beta$ satisfies \(f_\beta(\omega+r)=e^{2\pi \beta(r)} f_\beta(\omega)\) for all $\omega\in \Omega_\Phi$ and $r\in \RM^d$.
Given $v\in\Vv$, $h\in \hat \Hh_{tr}$, and any $e\in E_{tr,n}^{v}(h)$, we have \(\Rob^{-1}(r(e)) = \Rob^{-1}(s(e)) + r_e= \Rob^{-1}(r(e)) + \theta^n r_h\) by equation~\eqref{eq-transtr}.
Then equation~\eqref{eq-deltaef} gives
\[ 
\dtr_{e}f_\beta = (e^{2\pi \imath \theta^n \beta(h)} - 1) f_\beta\circ\Rob^{-1}(s(e)) 
\]
and hence
\begin{multline*}
q_{tr,n}^v(f_\beta,f_{\beta'}) =
\frac{1}{\# E_{tr,n}^v}
\sum_{h\in \hat \Hh_{tr}}
\frac{\bigl(e^{-2\pi \imath\beta(h)\theta^n} - 1\bigr)\bigl( e^{2\pi \imath\beta'(h)\theta^n} - 1\bigr)}{\rtr^{2n}} \\
 \sum_{e \in E_{tr,n}^v(h)}
\overline{f_\beta\circ\Rob^{-1}(s(e))} f_{\beta'}\circ\Rob^{-1}(s(e))\,.
\end{multline*}
As for the longitudinal form, we average over boxes which partition $\Xi_{t_v}$: \(B_\gamma= \Rob^{-1}([\gamma])\), for \(\gamma \in \Pi_{0,l}(\Gg)\), and $1<<l<<n$, and we get
\[
q_{tr,n}^v(f_\beta,f_{\beta'})  \simeq 
c_{tr} \sum_{ h\in \hat \Hh_{tr}}
\frac{\bigl(e^{-2\pi \imath\beta(h)\theta^n} - 1\bigr)\bigl( e^{2\pi \imath\beta'(h)\theta^n} - 1\bigr)}{\rtr^{2n}}
\freq(t_{s^2(h)})
\int_{\Xi_{t_v}} \overline{f_\beta} f_{\beta'} d\mu_{tr}^v
\]
with
\begin{equation}\label{eq-ctr}
c_{tr}= \Bigl( \sum_{h\in\hat\Hh_{tr}}L_{s^2(h)}\Bigr)^{-1}.
\end{equation}
In view of equation~\eqref{eq-DirForm-tr2} let us consider
\[
q_{tr,n}(f,g) = \sum_{v\in\Vv} n_v \; q_{tr,n}^v (f_{tr}^v,g_{tr}^v) \int_{t_v} \overline{f_{lg}^v} g_{lg}^v d\mu_{lg}^v \,.
\]
Summing those terms up over $v\in\Vv$, the integrals yield \(\int_{\Omega_\Phi} \bar{f}_\beta f_{\beta'} d\mu = \delta_{\beta \beta'} \|f_\beta\|^2\) by orthogonality of the set of eigenfunctions.
Hence we get
\[
q_{tr,n}(f_\beta,f_{\beta'}) \simeq  c_{tr} \delta_{\beta\beta'} \|f_{\beta}\|^2
\sum_{ h\in \hat\Hh_{tr}} \freq(t_{s^2(h)}) 
\left| \frac{e^{2\pi \imath\beta(h)\theta^n} - 1}{\rtr^{n}} \right|^2 \,.
\]
We now use the results of Section~\ref{ssect-Pisot}: 
Using equation~\eqref{eq-Pisot}, expanding the exponential,
and neglecting terms proportional to $ \left(\frac{|\theta_j|}{\rtr}\right)^{2n} $ against $ \left(\frac{|\theta_2|}{\rtr}\right)^{2n}$
if $|\theta_j|<|\theta_2|$
we approximate 
 \[\left| \frac{e^{2\pi \imath\beta(h)\theta^n} - 1}{\rtr^{n}} \right|^2 \simeq \left|\sum_{j=2}^J \frac{p_{\beta(h)}(\theta_j)\theta_j^n}{\rtr^{n}}
\right|^2 \simeq \left(\frac{|\theta_2|}{\rtr}\right)^{2n}  \sum_{j,j'=2}^L \overline{p_{\beta(h)}(\theta_j)} p_{\beta(h)}(\theta_{j'}) 
e^{\imath (\alpha_j-\alpha_{j'})n}\,.
\]
Here $L$ is such that $|\theta_j|=|\theta_2|$ for $2\le j \le L$ and we wrote $\theta_j = |\theta_2| e^{\imath\alpha_j}$. The second approximation is justified by the fact that $p_{\beta(h)}(\theta_{j})\neq 0$ for all $j$.
We assume now that the phases are non resonant (Definition~\ref{def-resphase}):
\[
 \alpha_j - \alpha_{j'} + 2\pi k + 2\pi \frac{\log\rtr}{\log\rlg} k' \neq 0\,, \quad \forall k,k' \in \ZM
\]
for all $j\neq j'$.
This implies that the oscillating parts of \([D_{tr},f_{tr}]^\ast [D_{tr},g_{tr}]\) are non resonant, and hence by Lemma~\ref{lem-resphi} do not contribute to $\Tt\bigl([D_{tr},f_{tr}]^\ast [D_{tr},g_{tr}] \otimes \id\bigr)$ in equation~\eqref{eq-DirForm-tr}.
Concerning the non oscillating part, so when $j=j'$, we clearly see that they vanish in the limit $n\to +\infty$ if $\rtr > |\theta_2|$ and that they  tend to $+\infty$, as $n\rightarrow +\infty$, if
$\rtr < |\theta_2|$.
Finally, 
if $ \rtr=|\theta_2|$ then
the non oscillating part of \(q_{tr,n}(f_\beta,f_{\beta})\) converges to
\[
- c_{tr} (2\pi)^2  \|f_{\beta}\|^2 \sum_{ h\in \hat\Hh_{tr}}\freq(t_{s^2(h)})\sum_{j=2}^L |p_{\beta(h)}(\theta_j)|^2 \,.
\]
Define the operator $\Delta_{tr}$ on the linear space of 
dynamical eigenfunctions by
\begin{equation}
\label{eq-trLaplace}
\Delta_{tr} f_\beta = - c_{tr} (2\pi)^2 \sum_{ h\in\hat \Hh_{tr}}\freq(t_{s^2(h)})\sum_{j=2}^L |p_{\beta(h)}(\theta_j)|^2 \ f_\beta \,.
\end{equation}
Then, on the space   of dynamical eigenfunctions the transversal form is given by 
\[
Q_{tr}(f_\beta,f_\beta)
 = \left\{ \begin{array}{ll}
\langle f_\beta,\Delta_{tr} f_\beta\rangle_{L^2(\Omega_\Phi,d\mu)}
 & \mbox{if } \rtr = |\theta_2| \\ 
0 & \mbox{if } \rtr > |\theta_2| \\
+\infty & \mbox{if } \rtr < |\theta_2|
\end{array}\right. \,.
\]
Clearly, $Q_{tr}$ is closable but trivial if $\rtr>|\theta_2|$, whereas   $Q_{tr}$ is not closable if
$\rtr<|\theta_2|$.

\paragraph{Main result}

We summarize here our results about the Dirichlet forms. For a Pisot number $\theta$ of degree $J>1$, we denote $\theta_j, j=2,\cdots J$, the other Galois conjugates in decreasing order of modulus. We write the sub-leading conjugates in the form
$\theta_j=|\theta_2|e^{\imath \alpha_j}$, $2\leq j\leq L$,  where $\alpha_j\in [0,2\pi)$. 
In particular, $|\theta_j|<|\theta_2|$ for $j>L$.
\begin{theo} 
\label{thm-DirForm}
Consider a Pisot-substitution tiling of $\RM^d$ with Pisot number $\theta$ of degree $J>1$.
Assume that for all $j\neq j'\leq L$ one has
\begin{equation}
 \label{eq-phasePisot}
 \alpha_j - \alpha_{j'} + 2\pi k + 2\pi \frac{\log|\theta_2|}{\log\theta} k' \neq 0\,, \quad \forall k,k' \in \ZM \,.
\end{equation}
Set $\rho_{lg} = \theta^{-1}$ and $\rho_{tr}=|\theta_2|$.

If the dynamical spectrum is purely discrete 
then the set of finite linear combinations of dynamical eigenfunctions is a core for $Q$ on which it is closable.
Furthermore, $Q = Q_{tr} + Q_{lg}$, and $Q_{tr/lg}$ has generator $\Delta_{tr/lg}=\sum_{h\in \hat\Hh_{tr/lg}}\Delta_{tr/lg}^h$ given by
\begin{eqnarray*}
\Delta_{lg}^h f_\beta &=&  -c_{lg}(2\pi)^2 \mbox{\em freq}(t_{s^2(h)})  \beta(a_h)^2 f_\beta ,\\
\Delta_{tr}^h f_\beta &=&  -c_{tr}(2\pi)^2 \mbox{\em freq}(t_{s^2(h)})  \langle \tilde{r_h}^\star,\beta\rangle^2 f_\beta 
\end{eqnarray*}
where 
\(c_{lg}= \bigl( \sum_{h\in\hat\Hh_{lg}}R_{s^2(h)} \bigr)^{-1}\) and
\(c_{tr}=\bigl( \sum_{h\in\hat\Hh_{tr}}L_{s^2(h)}\bigr)^{-1}\). 
\end{theo}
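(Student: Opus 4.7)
The plan is to assemble the asymptotic analysis developed in the two preceding paragraphs, to evaluate the resulting expressions on a well-chosen core, and then to deduce closability from the diagonal action of the limiting operators. By the assumption of purely discrete dynamical spectrum, the space $\Dd_0 = \mathrm{span}\{f_\beta : \beta \in E\}$ of finite linear combinations of continuous dynamical eigenfunctions is a dense subspace of $L^2(\Omega_\sigma, d\mu)$ contained in $C(\Omega_\sigma)$, so it is a legitimate candidate for a core. The decomposition $Q = Q_{lg} + Q_{tr}$ together with the factorisations~\eqref{eq-DirForm-lg2} and~\eqref{eq-DirForm-tr2} follows from Lemma~\ref{lem-prodform}(i) once one has verified, as was prepared in the text above, that $[D_{lg}^v, f_{lg}^v]^\ast[D_{lg}^v, g_{lg}^v]$ is strongly regular and that $[D_{tr}^v, f_{tr}^v]^\ast[D_{tr}^v, g_{tr}^v]$ splits into a strongly regular and a non-resonant piece.

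To evaluate the forms on $\Dd_0$, I would first expand bilinearly and reduce to the computation of $q_{lg,n}^v(f_\beta, f_{\beta'})$ and $q_{tr,n}^v(f_\beta, f_{\beta'})$, appealing to Lemma~\ref{lem-qn} for the passage to the limit. On the longitudinal side the translation identity $(a_h \cdot \nabla) f_\beta = 2\pi\imath\, \beta(a_h) f_\beta$ substitutes directly into the $C^2$--Taylor expansion already performed, and the choice $\rlg=\theta^{-1}$ absorbs the scaling $a_e=\theta^{-n}a_h$. On the transversal side, writing $\dtr_e f_\beta = \rtr^{-n}(e^{2\pi\imath\theta^n\beta(r_h)}-1)\, f_\beta(\Rob^{-1}(s(e)))$ and invoking Theorem~\ref{thm-Pisot} together with~\eqref{eq-Pisot} replaces $\theta^n \beta(r_h)$ modulo $\ZM$ by $-\sum_{j\geq 2} p_{\beta(r_h)}(\theta_j)\theta_j^n$; the choice $\rtr=|\theta_2|$ then extracts the subleading modulus class $\{\theta_2,\ldots,\theta_L\}$. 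The non-resonance hypothesis~\eqref{eq-phasePisot} combined with Lemma~\ref{lem-resphi} ensures that the cross terms $j\neq j'$ are non resonant in the sense of Definition~\ref{def-resphase} and hence do not contribute to the spectral state, leaving precisely the diagonal part $\sum_{j=2}^L |p_{\beta(r_h)}(\theta_j)|^2$; this matches $\langle \tilde r_h^\star,\beta\rangle^2_{\RM^{dJ}}$ via equation~\eqref{eq-geom} and the Euclidean structure on $S_2$ induced by the dual basis $\{\beta^\nu\}$. Summing over $v\in\Vv$, orthogonality of eigenfunctions in $L^2(\Omega_\sigma,d\mu)$ collapses the pairings $\int_{t_v}\overline{f^v_{\beta,lg}} f^v_{\beta',lg}\,d\mu_{lg}^v$ (and the transversal analogue) into $\delta_{\beta\beta'}\|f_\beta\|^2$, diagonalising both forms in the eigenbasis and producing the stated formulas for $\Delta_{lg}^h$ and $\Delta_{tr}^h$.

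Once $Q_{lg}$ and $Q_{tr}$ have been shown to act on $\Dd_0$ as multiplication by non-negative reals $\lambda_\beta^{lg}$ and $\lambda_\beta^{tr}$, closability is routine: the closure is the positive self-adjoint multiplication operator extended to its natural energy-finite subspace $\{\sum c_\beta f_\beta : \sum |c_\beta|^2 \lambda_\beta < \infty\}$, and the Markov property is inherited from Proposition~\ref{prop-form}. The hard part will be the uniform error control required to justify the asymptotic identifications at the level of $\Dd_0$ rather than for single eigenfunctions: one must compose the box-averaging, the Taylor or exponential expansion, the Pisot approximation, and the non-resonance cancellation uniformly over $v\in\Vv$, over $h\in\Hh_{tr/lg}$, and across bilinear expansions $f=\sum c_\beta f_\beta$. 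A related delicate point is the vanishing of the off-diagonal contributions $q_{\alpha,n}(f_\beta,f_{\beta'})$ for $\beta\neq\beta'$, which is where hypothesis~\eqref{eq-phasePisot} does the essential work through Corollary~\ref{cor-prodstrongreg} and Lemma~\ref{lem-resphi}.
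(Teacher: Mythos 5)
Your proposal is correct and follows essentially the same route as the paper: the paper's own proof simply assembles the asymptotic computations of the preceding two paragraphs, inserts \eqref{eq-geom} into \eqref{eq-trLaplace}, and deduces closability from orthogonality of the eigenfunctions and essential self-adjointness of the resulting diagonal operators, exactly as you describe (including your honest remark that the uniform error control in the box-averaging and Pisot approximations is the tedious part, which the paper explicitly chooses not to spell out). One small correction: the vanishing of $q_{\alpha,n}(f_\beta,f_{\beta'})$ for $\beta\neq\beta'$ is due purely to orthogonality of eigenfunctions, while hypothesis \eqref{eq-phasePisot} together with Lemma~\ref{lem-resphi} is what removes the oscillating cross terms $j\neq j'$ among the subleading conjugates, not the $\beta\neq\beta'$ terms as your final sentence suggests.
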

\begin{proof}
We have calculated the explicit formulas for $\Delta_{lg}^h$ and $\Delta_{tr}^h$ above. It remains to insert (\ref{eq-geom}) into (\ref{eq-trLaplace}).
Eigenfunctions to distinct eigenvalues are orthogonal and hence form an orthogonal basis for the Hilbert space by assumption.
Standard arguments show then that $\Delta^h_{tr/lg}$ is essentially self-adjoint and hence the form closable.
\end{proof}

Notice that the ratio $\log(\theta)/\log(|\theta_2|)$ in
equation~\eqref{eq-phasePisot} is irrational unless $\theta$ is a
Pisot number of degree $J=3$ and unimodular \cite{Wald}.

\subsection{Geometric interpretation of the Laplacians}
We provide an interpretation of the Laplacians as
differential operators on the maximal equicontinuous factor $\hat E$
of the tiling system.  

Since the dynamical spectrum is pure point and all eigenfunctions
continuous the factor map $\pi:\Omega_\Phi\to\hat E $ 
becomes an isomorphism between
$L^2(\Omega,\mu)$ and $L^2(\hat E,\eta)$, where $\eta$ is the
normalized Haar measure on $\hat E$. The Dirichlet form  
$Q$ can therefore also be regarded as a form on $L^2(\hat E,\eta)$. 
We consider again first the simpler unimodular case in which 
$\hat E$ is a $dJ$-torus.
Each point in $\hat E$ has a tangent space which we may identify with
$U\oplus S$, the direct sum of the spaces tangent to the unstable and
the stable direction of $\varphi^t_\RM$, resp. 
Now the directional derivative at $x$ along $u\in U\oplus S$ is given by 
$$ (\langle u,\nabla\rangle f_\beta)(x)= \frac{d}{dt} f_\beta(x+t u)\left|_{t=0}
\right. = 2\pi i \langle u,\beta\rangle f_\beta(x).$$ 
We thus have
\begin{eqnarray*}
\Delta_{lg}^h &=&  c_{lg}\freq(t_{s^2(h)})  \langle\tilde a_h,\nabla\rangle^2  ,\\
\Delta_{tr}^h  &=&  c_{tr}\freq(t_{s^2(h)})  \langle\tilde{r_h}^*,\nabla\rangle^2 . 
\end{eqnarray*}
In the non-unimodular case one obtains essentially the same:  $\hat E$
is an inverse limit of tori $\hat F\cong \Ue/F^{rec}$ w.r.t.\ the 
map $\varphi^t_\RM$, 
$\hat E =\lim_\leftarrow (\Ue/F^{rec},\varphi^t_\RM)$. 
An element of $\hat E$ is a sequence $(x_n)_n$ of elements $x_n\in \Ue/F^{rec}$
satifying $x_n = \varphi^t_\RM(x_{n+1})$. The 
action $\alpha_r$ of $r\in\R^d$ on $\hat E$ is thus $\alpha_r((x_n)_n)
= (x_n+{\varphi^t_\RM}^{-n}(r))_n$.
The continuous
functions on $\hat E$ are the direct limit of continuous functions on
the tori w.r.t.\ the pull back of $\varphi^t_\RM$, 
$C(\hat E) =\lim_\rightarrow (C(\Ue/F^{rec}),{\varphi^t_\RM}^*)$.
The elements of $C^\infty (\hat E)$ are approximated by sequences $(f_n)_n$
of elements $f_n\in C^\infty(\Ue/F^{rec})$ which are
eventually $0$ modulo the equivalence relation identifying
$(0,\cdots,0,f,-{\varphi^t_\RM}^*f,0\cdots )$ with $0$. It follows
that the definition of the directional derivative is compatible with
the above equivalence relation so that the expression
$\langle u,\nabla \rangle(f_n)_n$ makes sense. In particular, since each
eigenfunction corresponds to an element of the form
$(0,\cdots,0,f_\beta,0\cdots )$, $\beta\in F$, we obtain the same
formula as above:  $ (\langle u,\nabla\rangle f_\beta)(x)=  2\pi i \langle u,\beta\rangle
f_\beta(x)$ and hence also the same formulae for $\Delta_{lg}^h$ and
$\Delta_{tr}^h$.

This shows that
the generators of the Dirichlet forms can be expressed as elliptic
second order differential operators on $\hat E$ with constant
coefficients. 


\end{document}